\documentclass[12pt]{amsart}
\usepackage{amsrefs}
\usepackage{amssymb}
\usepackage{amsmath}
\usepackage{proof}
\usepackage{centernot}
\usepackage{enumerate}
\usepackage{graphicx}
\usepackage{fullpage}
\usepackage{blindtext}
\usepackage{cleveref}

\newbox\gnBoxA
\newdimen\gnCornerHgt
\setbox\gnBoxA=\hbox{$\ulcorner$}
\global\gnCornerHgt=\ht\gnBoxA
\newdimen\gnArgHgt
\def\quote #1{%
\setbox\gnBoxA=\hbox{$#1$}%
\gnArgHgt=\ht\gnBoxA%
\ifnum     \gnArgHgt<\gnCornerHgt \gnArgHgt=0pt%
\else \advance \gnArgHgt by -\gnCornerHgt%
\fi \raise\gnArgHgt\hbox{$\ulcorner$} \box\gnBoxA %
\raise\gnArgHgt\hbox{$\urcorner$}}

\title{On sequents of $\Sigma$ formulas}
\author{Andre Kornell}
\address{Department of Mathematics \\  University of California \\ Davis, CA 95616}
\email{kornell@math.ucdavis.edu}

\newtheorem{theorem}{Theorem}[section]
\newtheorem{lemma}[theorem]{Lemma}
\newtheorem{proposition}[theorem]{Proposition}
\newtheorem{corollary}[theorem]{Corollary}

\theoremstyle{definition}
\newtheorem{definition}[theorem]{Definition}

\theoremstyle{remark}

\theoremstyle{plain}
\newtheorem*{theorem*}{Theorem}
\newtheorem*{lemma*}{Lemma}
\newtheorem*{proposition*}{Proposition}

\theoremstyle{definition}
\newtheorem*{definition*}{Definition}

\theoremstyle{remark}

\def\dotminus{\mathbin{\ooalign{\hss\raise1ex\hbox{.}\hss\cr
  \mathsurround=0pt$-$}}}

\newcommand{\<}{\langle}
\renewcommand{\>}{\rangle}
\renewcommand{\:}{\colon}
\newcommand{\subsetof}{\subseteq}

\newcommand{\suchthat}{\,|\,}

\newcommand{\Union}{\bigcup}
\newcommand{\union}{\cup}

\newcommand{\proves}{\vdash}
\newcommand{\Trans}{\mathrm{Trans}}


\newcommand{\PRA}{\mathsf{PRA}}
\newcommand{\PRS}{\mathsf{PRS}}

\newcommand{\WKL}{\mathsf{WKL_0}}

\newcommand{\ZFC}{\mathsf{ZFC}}

\newcommand{\KP}{\mathsf{KP}}

\renewcommand{\And}{\wedge}
\newcommand{\Or}{\vee}
\newcommand{\Yields}{\;\Rightarrow\;}

\newcommand{\Implies}{\mathop{\, \rightarrow\,}}
\newcommand{\Iff}{\, \leftrightarrow\,}
\newcommand{\INFER}{\;\Rightarrow\;}
\newcommand{\Truth}{\top}
\newcommand{\Falsehood}{\bot}
\newcommand{\Equivalent}{\;\Leftrightarrow\;}
\newcommand{\Proves}{\vdash}
\renewcommand{\deduce}{\infer}
\newcommand{\OR}{\bigvee}
\newcommand{\AND}{\bigwedge}

\newcommand{\Exists}{\exists}


\renewcommand{\S}{\mathcal S}

\renewcommand{\AA}{\mathbb A}

\newcommand{\DD}{\mathbb D}

\newcommand{\PP}{\mathbb P}

\newcommand{\TT}{\mathbb T}

\newcommand{\iI}{\mathfrak I}

\newcommand{\kK}{\mathfrak K}
\newcommand{\lL}{\mathfrak L}

\newcommand{\Kappa}{\mathrm{K}}

\begin{document}

\maketitle

\begin{abstract}
\small
We investigate the position that foundational theories should be modelled on ordinary computability. In this context, we investigate the metamathematics of $\Sigma$ formulas. We consider theories whose axioms are implications between $\Sigma$ formulas. We show that arbitrarily strong such theories prove their own correctness.
\end{abstract}

\smallskip

\noindent \textbf{Motivation.}
The standard distinction between the metalanguage and the object language in metamathematics entails that we never consider the metamathematics of the mathematical universe as a whole. Instead, we study each structure from an external vantage point: an extension or expansion of that structure, or both. The necessity of the metalanguage/object-language distinction is of course a consequence of Tarski's undefinability theorem. The basic premise of our approach is to restrict the logic of the foundational system to circumvent Tarski's theorem. Our approach follows most clearly the work of Feferman. Our discussion of procedures is very much in the spirit of his Operational Set Theory \cite{Feferman09}. We also adopt the position that classical reasoning is valid for initial segments of the set theoretic universe, but not for the universe as a whole. However, we reject even intuitionistic reasoning for the set theoretic universe as a whole, as it is sufficient for Tarski's theorem. Instead, we consider a formula meaningful just in case it is equivalent to some procedure halting; hence, we term the approach ``positivistic''. In the context of set theory, these are essentially the $ \mathbf \Sigma$ formulas. It is an elementary fact that the truth predicate for $\mathbf \Sigma$ sentences is itself $\mathbf \Sigma$, so we may investigate the metamathematics of this fragment without an external vantage point, and therefore, for the mathematical universe as a whole.

\smallskip

\noindent \textbf{Results.}
We define a postivistic theory to consist of sequents of $\mathbf \Sigma$ formulas. We define a positivistic proof to be a sequence of $\mathbf \Sigma$ formulas, with each formula obtained from the preceding formula according to an axiom, which may be applied deeply (\cref{block: positivistic proof}). We obtain a complete list of logical axioms for positivistic reasoning using a term model construction (\cref{block: completeness}). We show that there is a positivistic theory for pure sets that proves Tarski's semantic axioms for the truth predicate, that proves that the conclusion of any axiom of that theory is true if its assumption is true, and that interprets $\ZFC$; likewise there is such a positivistic theory for natural numbers that interprets $\PRA$ (\cref{block: correctness}). Observing that the powerset operation is not definable by a $\mathbf \Sigma$ formula (\cref{block: powerset}), we propose two completeness axioms for the universe of pure sets, and we show that they are equivalent (\cref{block: equivalence}). These completeness axioms imply that every set is countable. Following Weaver \cite{Weaver}, we define an assertibility predicate for intuitionistic formulas in the language of set theory (\cref{block: assertibility definition}), we show that the assertibility predicate satisfies Tarski's semantic axioms for the positivistic connectives, and in particular, we show that a $\mathbf \Sigma$ sentence is true if and only if it is assertible (\cref{block: assertibility properties}).

\smallskip
\newpage
\small
\noindent \textbf{About these notes.} These notes are not the final draft of a paper; I am posting this draft prematurely for practical reasons. Some details need more attention; some ideas should be expressed more concisely. The foundational requirements of some of the later arguments need to be determined with care. I have included a minimal bibliography, that will need to be substantially extended.

I first started on this line of thought as a student in a course of Burgess. I cite his discussion \cite{Burgess} of the distinction between accepting each derivation of a system and accepting that system as whole. Three authors inadvertently spurred me to finally put pen to paper. First, Weaver's incisive discussion \cite{Weaver} of foundational positions in terms of a choice of objects, constructions and logic, and of sets in terms of surveyability, expresses a very similar approach to the one that I had held vaguely, and am now expressing here. I diverge from Weaver in my position that the choice of logic is essentially forced by the choice of objects and constructions. I regret that I was not able to include anything about surveyablility in this draft. Second, I was struck by Schweber's definition \cite{KnightMontalbanSchweber} of computational reducibility for uncountable objects. The computational properties of a structure do not depend on how that structure is forced to be countable; one simple, perhaps simplistic, explanation for this phenomenon is that the structure simply is countable. Third, I was intrigued by Hamkins's embeddability result \cite{Hamkins}, which suggests to me some hope of explaining the linear ordering of natural set theories by consistency strength.

I am not a logician by training. Some effort went into proving results that are widely known in the logic community. I do not have a good understanding of what in these notes is new. I had obtained the complete list of logical axioms for positivistic proof before I encountered Beklemishev's work on positive deep inference \cite{Beklemishev}. The application of positive deep inference to predicate logic may nevertheless be new. I had obtained the formalization of the naive argument for the validity of bounded proofs before I encountered Pudl\'ak's work on finitistic consistency proofs \cite{Pudlak}. Pudl\'ak's result uses a different measure of size, for both the derivations he considers and the proof itself, and he permits classical logic, in both the derivations he considers and in the proof itself; I do not yet fully understand the relationship between the two results. My impression is that at least the extreme naturality of the proof given here is novel. Takahashi developed a theory of finitary mathematics \cite{Takahashi} that identifies finitary methods with the $\Sigma_1$ definable functions, as does our approach. Takahashi's approach is less general in that it studies a single formal system, which necessarily describes the hereditarily finite sets; furthermore, this system is not positivistic because it includes bounded universal generalization; see \cref{section: model universes}. I expect to encounter other examples of previous work that includes some of the results here, partially or wholly. I am even less familiar with the literature in the philosophy of mathematics.

Our exposition draws heavily on Rathjen's development of $\PRS$ \cite{Rathjen}, the theory of primitive recursive set functions, and Feferman's proof of cut-elimination \cite{Feferman} for an infinitary sequent calculus that includes countable conjunctions and disjunctions. We apply the Rauszer-Sabalski lemma \cite{RauszerSabalski} in our completeness argument.

I thank Lev Beklemishev, Joel David Hamkins, Joost Joosten, Alex Kruckman, Michael Rathjen, and Stephen Simpson, for responding patiently to my invariably elementary questions about proof theory. I thank Nik Weaver for discussion before and after this project. I am grateful to Andrew Marks for his comments on an earlier, messier presentation of these results. I apologize to any people whom I may have left out, in haste.

Figures summarizing various deductive systems appear at the end of these notes. I use a colon ($:$) after each quantifier as a visual separator. The range of the quantifier extends to the next conditional implication symbol ($\Rightarrow$), unless the conditional implication symbol is enclosed by parentheses.

\normalsize

\newpage

\section{summary}\label{section: summary}

The positivistic approach to foundations, introduced here, includes no assumption on what mathematical objects may exist. Rather, it is the position that a mathematical proposition is meaningful if and only if it is verifiable by a mathematical procedure, whatever objects may exist and whatever procedures may be possible. The positivistic approach is then a kind of extension of the positivism of the Vienna Circle to mathematical objects, replacing the human agent with an ideal agent.

In our motivating example of a mathematical universe, the objects are natural numbers and the procedures are recursive partial function presentations. The positivistic position is then that the computably verifiable predicates are the only meaningful ones. This example exhibits many characteristic features of the positivistic approach; in particular one obtains a self-contained universe with a truth predicate given by the universal procedure, and having the nature of a potential, rather than completed totality.

The restriction to computably verifiable predicates is a feature of finitism; we might say that finitism is Turing computability plus positivism. We vary the notion of computation, while retaining the positivism.

In the examples that we consider, the mathematical universe consists of pure sets. We consider model universes $H_\kappa$ of sets of hereditary cardinality less than a regular cardinal $\kappa$. The computable propositions are given by $\Sigma$ formulas (essentially $\Sigma_1$ formulas) in the language of set theory, at times augmented by various function symbols. The axioms are given by implications between $\Sigma$ formulas (essentially $\Pi_2$ sentences). Each axiom expresses the intuition that while the antecedent formula is true of a given tuple of objects, the consequent formula is also true of that tuple of objects. Each axiom permits inference by replacing the antecedent by the consequent, even a substitution instance of the antecedent by the corresponding substitution instance of the consequent, and even when that substitution instance of the antecedent occurs as a subformula. This is called deep inference; it is valid here because we confine negation to atomic formulas.

We obtain a list of logical axioms that is complete for positivistic reasoning. The theory of primitive recursive arithemtic is sufficient to prove the reduction of intuitionistic and classical derivations to positivistic proofs. With the additional axiom of iterated reflection through the ordinals, infinitary intuitionistic derivations can also be reduced to positivistic proofs. Defining the assertiblity of an intuitionistic sentence to be its derivability in infinitary intuitionistic logic, from the axioms of a fixed theory and the true $\Sigma$ sentences with parameters, we find that assertibility, like the truth predicate, respects the meaning of the positivistic logical connectives.

Thus, the positivistic approach does not reject the existence of any object, nor the feasibility of any procedure, nor the validity of any classical deduction, nor even the justifiability of any $\Pi_2$ axiom. The impact of taking the positivistic position is the rejection of axioms on the basis of their logical complexity. Among the axioms of $\mathsf{ZFC}$, the powerset axiom may be rejected on this basis. If we do not assume the feasibility of the powerset operation, limiting ourselves to $\Sigma$ formulas in the langague of set theory, then we cannot express the powerset axiom as a $\Pi_2$ formula.

Furthermore, an inclusive conception of the universe of pure sets suggests that all pure sets are countable. The forcing method potentially produces a bijection between any given infinite set and the set of natural numbers; this bijection should belong to the universe of all possible pure sets. The forcing method is not simply a central tool in our understanding of set theory, it is a special case of the principle of completeness for infinitary logic, itself a natural part of the inclusive conception.

Following the positivistic approach, we are thus led to rejecting the powerset axiom in favor of a universe of pure sets that is complete and self-contained in a number of ways. It is a mathematical universe that contains its own metamathematics; we avoid Tarski's theorem by rejecting the notion that every proposition must have a negation. Instead, G\"odel's theorems and reflections principles come to the foreground.

Any given theory, that is, a class of axioms given by some unary predicate has a correctness principle, which asserts that if the premise of some axiom is true, then its conclusion is also true. Such a correctness axiom proves its own correctness: it can be used to show that if the premise of any axiom is true, then its conclusion is also true. Thus, the resulting theory proves that each of its inferences is correct! Is this in contradiction with G\"odel's second incompeleteness theorem? No, because the induction rule is not admissible for such a theory. We have only obtained, for every natural number $n$, the proof that if the first line of an $n$-line proof is the true atomic sentence $\Truth$, then the second line of the proof is true, then the third line of the proof is true, then ... , then the $n$-th line of that proof is true, so the proof does not prove the false atomic sentence $\Falsehood$. The reader will surely agree that though such a theory does not prove its own consistency, it very nearly does so.

\section{potentialism}\label{section: potentialism}

The positivistic approach may be considered a form of mathematical potentialism. Potentialism is, roughly, the position that the mathematical universe is not a completed totality, and that its objects cannot be taken to all exist simultaneously. Potentialism has its roots in classical inquiry into the nature of the infinite. In the context of set theory, the motivation for potentialism may be summarized as follows: First, the paradoxes of naive set theory suggest that there is something alien about the totality of all sets and the totality of all ordinals. Second, the intuitive framework underlying large cardinal axioms suggests that the universe of sets is not vertically completed. Third, set-theoretic pluralism arising from the forcing method suggests that the universe is not horizontally completed either.

The positivistic approach also presents an account of truth in mathematics. The major obstacle to the naive conception of truth is the lair paradox, formalized as Tarski's undefinability theorem. Tarski's undefinability theorem is the proposition that any system that includes classical first-order logic, a truth predicate, and minimal axioms about finitistic mathematics, is necessarily inconsistent. Consequently, foundational theories typically lack a truth predicate, leading to the necessary metamathematical distinction between the object-language and the metalanguage.

Thus, there is a sense of incompleteness both in the standard account of the mathematical universe, and in the standard account of truth. This incompleteness can be approached using hierarchies or modalities. The motivation of this paper is to describe the mathematical universe as a whole. For this purpose, we give up classical first-order logic in exchange for a universal truth predicate. We must even give up intuitionistic first-order logic, because it is sufficient for the proof of Tarski's undefinability theorem. This paper accepts only $ \mathbf\Sigma$ sentences as meaningful propositions about the mathematical universe.

The true motivation of this paper is toward an account of infinitary data. The interpretation of mathematics in set theory reduces mathematical objects to their information content, with pure sets in the role of datums. This is the only significance of sets in this paper; we look at pure sets and formulas because these constitute the established foundation of infinitary mathematics, and the established subject of metamathematical research. In a research vacuum, I would prefer to study sets of ordinals, i. e., well-ordered sequences of bit values, and procedures on these sequences. Where these two models of infinitary data diverge for choice reasons, I trust the latter.

\section{objects and procedures}\label{section: objects}

The positivistic viewpoint includes no position on what mathematical objects may exist or what mathematical procedures are possible; it is a position on what predicates about these objects and procedures are meaningful. We use the term ``procedure'' in a wide sense, to include any abstract way of obtaining an object from given objects. We include nondeterministic procedures such as Vitali's ``construction'' of a nonmeasurable set of real numbers; in particular, we include the procedure that produces an arbitrary mathematical object.

The reader is invited to take a position on what mathematical objects may exist and what mathematical procedures are possible, out of conviction, or for the sake of argument. Your choice of objects may reasonably be termed your mathematical ontology, and your choice of procedures may, more controversially, be termed your mathematical epistemology. This latter term may be justified by the notion that the phrase ``mathematical procedure'' does not refer to a mechanical method, but to a way of deriving abstract entities. This choice of mathematical ontology and mathematical epistemology determines the mathematical universe. 

Our use of word ``epistemology'' captures the essence of the positivistic viewpoint: propositions are mathematical objects, and truth is a mathematical procedure, in the sense above. Metamathematical objects are mathematical objects, and any way of assigning truth to propositions must be part of the mathematical universe, if indeed the mathematical universe includes all mathematical objects that may exist and all possible mathematical procedures. Crucially, mathematical procedures must themselves be counted among mathematical objects.

We take a proposition to be a nullary procedure, i. e., a procedure from no given objects. We call a proposition true when it produces a specified ``truth'' object, which we may take to be the $0$-tuple. More generally, an $n$-ary predicate is just an $n$-ary procedure; when an $n$-ary predicate produces the truth object from $n$ given objects, we say that it is true of those objects. We assume that each object has a canonical name, i. e., a nullary procedure that produces just that object, so we can express the truth of an $n$-ary predicate of $n$ given objects as a proposition.

We can implement each positive Boolean connective by composing the given propositions with a specific procedure. Conjunction is implement as the binary procedure that produces the truth object when both given objects are the truth object. Disjunction is implemented as the binary procedure that produces the truth object when either given object is the truth object. Falsehood is implemented as the the nullary procedure that produces no objects.

The usual diagonalization argument shows that there is no procedure that produces a negation of the given proposition, i. e., a proposition that produces the truth object just in case the given proposition does not. More generally, there is no procedure that produces an implication of two given propositions. Though we may be willing to infer one proposition from another, the validity of this inference cannot generally be expressed as a proposition, and so cannot be a truth about the mathematical universe.

A theory, vaguely speaking, consists of the inferences one is willing to make; we formalize it essentially as a binary predicate. There is no general notion of truth for theories, since there is no general notion of truth for a single inferences. Furthermore, there is no general notion of falsehood. We would like to say that a false theory is one that infers a false proposition from a true proposition, but we have no falsehood predicate. We are reduced to doubting theories that draw dubious conclusions.

\section{formulas}\label{section: formulas}

To simplify the presentation we break with the very general discussion of the previous section by supposing a universe of pure sets. Many of the results that follow hold in more general settings, but clearly enunciating the necessary assumptions muddles the presentation. The interpretation of the mathematical universe in its class of pure sets is ubiquitous in the study of foundations, and it is useful to us for precisely this reason. We justify this interpretability assumption by remarking that pure sets appear to serve adequately as infinitary datums. Thus, we assume that the objects of the mathematical universe are pure sets.

We model our notion of procedure on ordinary computability. Specifically we imagine a register machine, with each register holding a pure set that varies over the course of the procedure. Appropriately, we label each register with a variable symbol; while we consider finitary logic, it is enough to imagine countably many registers, with one register and one variable symbol for each natural number. Within the universe of pure sets, a natural number is naturally a finite von Neumann ordinal.

We express procedures in first-order logic. Each notion of procedure is given by the cardinal characteristics of its logic, and the functions and predicates in its vocabulary. Our vocabulary will consist of the binary predicate symbols $=$, $\neq$, $\in$, and $\not \in$, and various function symbols, generally definable by a $L_{\omega \omega}(= , \neq, \in, \not\in)$ formula. 
We will primarily focus on the logics $L_{\omega \omega}$, and $L_{\infty \omega}$. In a universe that does not satisfy the axiom of choice, we will write $L_{\Omega \omega}$ to refer to the class of infinitary formulas in which the conjunctions and disjunctions are well-ordered. 

The notation $L_{\omega \omega}(\S)$ refers to formulas in a given vocabulary $\S$. When the vocabulary $\S$ is not specified, it is understood to be the vocabulary of our set theory, usually consisting of just the predicate symbols $=$, $\neq$, $\in$, and $\not \in$, but possibly including the vocabulary of $\PRS$, or even the symbol $\wp$ for the powerset construction . 

A formula is in $\Sigma(L_{\omega \omega}(\S))$ if it is build up from atomic formulas, including the logical symbols $\Truth$ and $\Falsehood$, using binary conjunction, binary disjunction, bounded universal quantification, and unbounded existential quantification. We accept bounded universal quantification, and reject unbounded universal quantification, because we assume that each set is surveyable, but the universe as a whole is not. (Surveyability is a term introduced by Weaver \cite{Weaver} that for our purposes is essentially synonymous to definiteness, as of a totality; I sometimes prefer this term for its procedural intuitions.) We include the negated  predicate symbols $\neq$ and $\not\in$, but exclude the negation connective, because we assume that it is possible to verify that given pure sets are distinct, or that one is not an element of the other, but that it is generally impossible to verify that a given formula is not true. Indeed, our attitude is that it is generally \emph{meaningless} to speak of a given formula not being true.

Similarly, we say that a formula is in $\mathrm I(L_{\omega \omega}(\S))$ if it is built up using conjunction, disjunction, unbounded existential quantification, unbounded universal quantification, and implication; the letter $\mathrm I$ suggests intuitionistic logic, as in Gentzen's $\mathfrak{LI}$, the letter $\mathfrak{I}$ commonly mistaken for $\mathfrak{J}$. We regard the formulas of $\mathrm I(L_{\omega \omega})$ as \textit{prima facie} meaningless.

We would like the meaning of $\mathrm K(L_{\omega \omega}(\S))$ to depend on context; the letter $\mathrm K$ suggests classical logic, as in Gentzen's $\mathfrak{LK}$. In the context of the G\"odel-Gentzen translation, it is natural to say that a formula is in $\mathrm K(L_{\omega \omega}(\S))$ if it is built up using conjunction, unbounded universal quantification, and implication. However, for proof theoretic arguments it is more convenient to represent disjunction, unbounded existential implication, and negation as separate symbols. Furthermore, in the context of Friedman's translation, it is convenient to work with the same class of formulas as in the context of G\"odel-Gentzen translation, but to parse them differently. For definiteness, we will say that a formula is in $\mathrm K(L_{\omega \omega}(\S))$ if it is built up using conjunction, disjunction, unbounded universal quantification, unbounded existential quantification, and negation.

Finally, we write $\mathbf \Sigma$, $\mathbf I$, and $\mathbf K$ in place of $\Sigma$, $\mathrm I$ and $\mathrm K$, to indicate that parameters are permitted. Note that because truth is definable for $\mathbf \Sigma(L_{\infty \omega})$ sentences, the formulas in $\mathbf \Sigma(L_{\infty \omega})$ do not yields a strictly stronger notion of computability than the apparently weaker $\mathbf \Sigma(L_{\omega \omega})$. Note also that within the universe of hereditarily finite sets, $L_{\infty \omega}$ and $L_{\omega \omega}$ essentially coincide, with the latter formally restricted to \emph{binary} conjunction and disjunction.

We write $\phi^v_t$ and $\phi(v/t)$ for the result of substituting the term $t$ for the variable $v$. We write $\phi(v_1, \ldots, v_n)$ for a formula whose free variables are among $v_1, \ldots, v_n$, and we write $\phi(t_1, \ldots, t_n)$ as a shorthand for $\phi(v_1/t_1, \ldots, v_n/t_n)$. We write $\underline a$ for the parametric symbol naming the object $a$, and we write $\phi[a]$ as a shorthand for $\phi(\underline a)$. We sometimes place corners around a string, e. g., $\quote{\forall s \in a\: s \neq a}$, to denote that string, but more often we blur this important distinction, writing $\phi(x/t)$ where we should write $\phi(\quote{x}/t)$, and writing ``the symbol $\pi$'' where we should write ``the symbol $\quote{\pi}$''. We may abbreviate the phrase ``substitution instance'' by the word ``instance''.

\section{positivistic proof}\label{section: proof}

If we intuit a procedure $q$ to have at least the outputs of procedure $p$ on every tuple of inputs, we write $p \Yields q$, and we feel justified in replacing any occurrence of $p$, as a subprocedure in a proposition, by $q$. In the context of a universe of pure sets, we only directly consider procedures given by $\mathbf \Sigma$ formulas, so if we intuit that $\psi(x_1, \ldots, x_n)$ is true whenever $\phi(x_1, \ldots, x_n)$ is true for every tuple of values, we write  $\phi(x_1, \ldots, x_n) \Yields \psi(x_1, \ldots, x_n)$. Such an implication between $\mathbf \Sigma$ formulas is called a \underline{conditional}.

Replacing a subprocedure $\phi$ by $\psi$ amounts to replacing a \emph{subformula} of the form $\phi(t_1, \ldots t_n)$ by a subformula of the form $\psi(t_1, \ldots, t_n)$, where the terms $t_1, \ldots, t_n$ may include bound variables. If the subformula $\phi(t_1, \ldots t_n)$ occurs below a bounded universal quantifier, then we may think of the subprocedure $\phi$ occuring within a kind of $\texttt{for}$ loop. An inference on a subformula, such as this, is termed ``deep inference'' in the literature.

A theory consists of conditionals $\phi(x_1, \ldots, x_n) \Yields \psi(x_1, \ldots, x_n)$, which are called the axioms of the theory. Formally, we distinguish between intentional theories and extensional theories. An \underline{intentional theory} is formula $\tau(x)$, which is satisfied by its axioms, whereas an \underline{extensional theory} is just the set $T$ of its axioms. We say that a structure models an extension theory $T$ just in case that structure models the universal closure $\overline{\phi(x_1, \ldots, x_n) \Yields \psi(x_1, \ldots, x_n)}$ of every element of $T$, in the ordinary sense. The notion of ``a model'' is more subtle for intentional theories. We say that a structure models an intentional theory $\tau$ just in case it models the universal closure of the correctness principle of $\tau$, which we define in \cref{section: truth}.

\begin{definition}\label{block: positivistic proof}
Let $\phi(v_1, \ldots, v_n) \Yields \psi(v_1, \ldots, v_n)$ be a conditional, that is, an implication between $\mathbf \Sigma(L_{\omega \omega})$ formulas. An \underline{application} of $\phi \Yields \psi$, is a pair of $\mathbf \Sigma(L_{\omega \omega})$ formulas $(\chi_0, \chi_1)$ such that $\chi_1$ is obtained from $\chi_0$ by replacing a subformula of $\chi_0$ that is of the form $\phi(t_1, \ldots, t_n)$ by $\psi(t_1, \ldots, t_n)$.

An \underline{extensional theory} $T$ is a set of conditionals, and its elements are called its axioms. An \underline{intentional theory} $\tau$ is a $\mathbf \Sigma(L_{\omega \omega})$ formula with a single free variable. An \underline{axiom} of $\tau$ is a conditional $\phi \Yields \psi$ that satisfies this formula.
A finitary positivistic \underline{proof} of $\chi \Yields \chi'$ in $\tau$ is a finite sequence of $\mathbf \Sigma(L_{\omega \omega})$ formulas $(\chi_0, \ldots , \chi_n)$ such that $\chi_0 = \chi$, $\chi_n = \chi'$, and each step $(\chi_i, \chi_{i+1})$ is an application of some axiom.
\end{definition}

Is the appearance of free variables in the axioms and formulas of a positivistic proof compatible with the positivistic rejection of universal claims? This question is addressed in \cref{section: remarks}. Briefly, the free variables that appear in the formulas that make up a positivistic proof are assumed to have fixed values for the entire course of the proof; they may be treated as syntactic variables. The free variables that appear in axioms do have a universal sense; however, axioms express inference rules rather than propositions. The absolute justifiability of a given inference is not a positivistically meaningful proposition.

\section{the logical axioms}\label{section: completeness}

Our informal remarks have been in or about some hypothetical universe. For concreteness, the following more formal remarks will be about \emph{model universes} under standard foundations, i. e., $\ZFC$, possibly augmented by large cardinal axioms. We begin by obtaining a complete list of logical axioms for positivisitic proof. Thus, we consider all models of any vocabulary containing distinguished binary predicates $=$, $\neq$, $\in$, and $\not \in$, such that $=$ and $\neq$ denote complementary relations, $\in$ and $\not \in$ denote complementary relations, and $=$ denotes genuine equality.

We initially obtain our list of logical axioms for positivistic proof by constructing a completeness argument in the standard way. This argument may be viewed as taking place in $\ZFC$, where we are considering model universes. 

First, we obtain a sequent calculus, whose sequents are conditionals $\phi \Yields \psi$. Fix an extensional theory $T$, and a conditional $\chi \Yields \chi'$ that cannot be proved from $T$ using rules of inference to be added retroactively. Define a preorder on all $\Sigma$ formulas with $\phi \leq \psi$ just in case $\phi \Yields \psi$ is provable from $T$. We obtain a term model of $T$ that satisfies $\chi$ but not $\chi'$ from a prime filter on the preorder that contains $\chi$, but not $\chi'$. This argument is entirely standard apart from the difficulty that our preorder is not complemented, because we do not have negation. To ensure the existence of a prime filter that respects the infinitary joins and meets corresponding to quantified formulas, we add distributive axioms that are unnecessary in the classical case. We arrive at the conditional calculus in figure 1.

\begin{lemma}[$\ZFC$]
Let $\S$ be a countable vocabulary consisting of function symbols and predicate symbols of any finite arity, including $=$, $\neq$, $\in$ and $\not \in$. Let $T$ be a set of $ \Sigma(L_{\omega \omega}(\S))$ conditionals. Let $\chi \Yields \chi'$ be any $\Sigma(L_{\omega \omega}(\S))$ conditional. Then, either $\chi \Yields \chi'$ is derivable from $T$ using the system given in figure 1, or there is a model $M$ of $T$ where $\chi \Yields \chi'$ is false for some tuple of elements.
\end{lemma}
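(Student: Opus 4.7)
The plan is to carry out a Henkin-style term-model construction adapted to the absence of negation, with the Rauszer--Sabalski lemma playing the role of the usual Lindenbaum maximization step. Suppose $\chi \Yields \chi'$ is not derivable from $T$ in the system of figure 1. First I would enlarge $\S$ to a countable vocabulary $\S^+$ by adjoining countably many fresh constants to serve as Henkin witnesses for existential quantifiers. Let $\mathrm{Form}$ denote the set of $\Sigma(L_{\omega\omega}(\S^+))$ formulas, and define $\phi \leq \psi$ iff $T$ derives $\phi \Yields \psi$ in figure 1. The quotient $L$ by the induced equivalence is a distributive lattice with $\Truth$ on top, $\Falsehood$ at the bottom, and $\And, \Or$ as meet and join; deep inference guarantees that these descend to well-defined lattice operations.

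The heart of the argument is to produce a prime filter $F$ on $L$ with $[\chi] \in F$ and $[\chi'] \notin F$ that respects the countable joins $[\exists x\, \phi] = \bigvee_t [\phi(t)]$ and the countable meets $[\forall x \in a\, \phi] = \bigwedge_t [t \in a \Yields \phi(t)]$ indexed by closed $\S^+$-terms $t$; since both the language and $T$ are countable, only countably many such distinguished joins and meets need to be respected. To make this possible I would include in figure 1 the infinite distributive identities dual to the quantifier clauses---for instance the analogues of $\psi \And \exists x\, \phi \Yields \exists x\, (\psi \And \phi)$ and the dual for bounded universal quantifiers. These distributive axioms make $L$ into a countable distributive lattice with a countable family of distinguished joins and meets satisfying the distributive laws required by Rauszer--Sabalski, and that lemma then delivers $F$ separating $[\chi]$ from $[\chi']$.

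From $F$, build the term model $M$. Its underlying set is the set of closed $\S^+$-terms modulo $\{(s,t) : [s = t] \in F\}$, which is an equivalence relation and a congruence for the function and predicate symbols by the logical equality axioms together with primeness of $F$. Interpret each function symbol by term composition and each predicate symbol $P$ as the set of tuples $([t_1],\ldots,[t_n])$ with $[P(t_1,\ldots,t_n)] \in F$; the pairs $(=,\neq)$ and $(\in,\not\in)$ become complementary because the corresponding logical axioms $s = t \And s \neq t \Yields \Falsehood$ and $\Truth \Yields s = t \Or s \neq t$ (and their analogues for $\in$) force exactly one member of each pair into $F$. A truth lemma by induction on $\Sigma$-formula complexity then shows $M \models \phi$ iff $[\phi] \in F$: atomic cases are by definition, conjunction uses the filter property, disjunction and unbounded existential quantification use primeness and the Henkin witnesses, and bounded universal quantification uses the meet-preservation of $F$. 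In particular $M$ satisfies every axiom of $T$ but falsifies $\chi \Yields \chi'$ under the assignment given by the Henkin constants for the free variables of $\chi$ and $\chi'$.

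The chief obstacle is pinning down the precise infinite distributive identities to add to figure 1 so that the quantifier joins and meets satisfy the hypotheses of the Rauszer--Sabalski lemma; classically these distributivities are automatic via negation, and without negation they must be supplied explicitly as logical axioms. A secondary subtlety is the treatment of the primitive ``negated'' atomic predicates $\neq$ and $\not\in$, which must be forced to denote genuine complements; this is innocuous because it is confined to atomic formulas and so remains compatible with the positivistic restriction of negation to the atomic level.
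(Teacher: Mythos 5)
Your proposal is correct and takes essentially the same route as the paper: the Lindenbaum preorder of $\Sigma$ formulas under derivability from $T$, a prime filter obtained from the Rauszer--Sabalski lemma respecting the quantifier joins and meets (with the infinite distributivity supplied explicitly by the logical axioms, since negation is unavailable), a term model with a truth lemma, complementation of $=,\neq$ and $\in,\not\in$ via primeness, and a final quotient by provable equality --- the only cosmetic difference being that you adjoin Henkin constants where the paper uses the free variables themselves as the term universe (so that the eigenvariable rules apply directly). One small repair: the distinguished meet for $\forall x \in a\:\phi$ must be taken over the $\Sigma$ formulas $t \not\in a \Or \phi(t)$, as in the paper, rather than over the implications $t \in a \Yields \phi(t)$, which are conditionals and hence not elements of the lattice.
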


\begin{proof}
Define $\phi \leq \psi$ whenever $\phi \Yields \psi$ is derivable from $T$. Rules (-2) and (-1) imply that $\leq$ is a preorder on $\Sigma(L_{\omega \omega}(\S))$. Rules (1)-(9) imply that, modulo equivalence, $( \Sigma(L_{\omega \omega}(\S)),\leq)$ is a bounded distributive lattice.

As usual for a term model construction, we distinguish between free variables, $a$, $b$, $c$,..., and bound variables, $x$, $y$, $z$,.... Rules (10), (11), and (0), imply that $\exists x\: \phi$ is the join of $\{\phi^x_t\: t\text{ is a term}\}$. Rules (13), (14), and (0), imply that $\forall x\in s\: \phi$ is the meet of $\{t \not \in s \Or \phi^x_t\: t \text{ is a term}\}$. Rules (12) and (15) imply that these infinitary meets and joins are distributive.

Assume $\chi \not \leq \chi'$. The Rauszer-Sabalski lemma\cite{RauszerSabalski} guarantees the existence of prime filter that preserves the meets and joins described above. As usual, we define the universe of $M$ to be the set of all terms in the vocabulary $\S$ using only the free variables, and we define terms $t_1, \ldots, t_n$ to satisfy the formula $\phi(a_1, \ldots, a_n)$ just in case $\phi(a_1/t_1, \ldots, a_n/t_n)$ is the prime filter. Our choice of filter guarantees that this definition satisfies Tarski's definition of truth. Rule (0) implies that $M$ is a model of $T$, while the predicate symbols $=$, $\neq$, $\in$, and $\not \in$ are treated as being entirely nonlogical.

Rules (16)-(19) imply that $=$ and $\neq$ denote complementary relations on $M$, and likewise for $\in$ and $\not \in$. Rules (20)-(22) imply that $=$ denotes an equivalence relation on $M$. Finally, rule (23) implies that our definition of satisfaction respects this equivalence relation. We conclude that $M/\!\!=$ is a model of $T$.
\end{proof}

We add logical axioms as they become convenient to eliminate the nonaxiomatic rules of inference given in figure 1.

\begin{theorem}[$\ZFC$]
Let $\S$ be a countable vocabulary consisting of function symbols and predicate symbols of any finite arity. Let $T$ be a set of $ \Sigma(L_{\omega \omega}(\S))$. Let $\chi \Yields \chi'$ be any $\Sigma(L_{\omega \omega}(\S))$ conditional. Then, either $\chi \Yields \chi'$ has a finitary positivistic proof using the axioms of $T$ and the logical axioms given in figure 2, or there is a model $M$ of $T$ where $\chi \Yields \chi'$ is false for some tuple of elements.
\end{theorem}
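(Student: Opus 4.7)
I would prove the theorem by reduction to the preceding lemma. That lemma already gives a countermodel whenever $\chi\Yields\chi'$ is not derivable in the sequent calculus of figure 1. It therefore suffices to show that every conditional derivable in figure 1 admits a finitary positivistic proof from the logical axioms of figure 2. In other words, I would simulate each rule of figure 1, step by step, inside the deep-inference format of \cref{block: positivistic proof}.

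\textbf{Key steps, in order.} First, I would observe that the purely structural rules come for free. Reflexivity of $\leq$ corresponds to the empty positivistic proof of $\chi\Yields\chi$, and transitivity of $\leq$ corresponds to concatenation of positivistic proofs: if $(\chi_0,\ldots,\chi_n)$ witnesses $\chi\Yields\chi'$ and $(\chi_n,\ldots,\chi_m)$ witnesses $\chi'\Yields\chi''$, then their concatenation witnesses $\chi\Yields\chi''$. Second, I would observe that the congruence (``monotonicity'') rules of figure 1 for $\And$, $\Or$, $\exists x$, and $\forall x\in s$ are also free: because an application of a conditional $\phi\Yields\psi$ in positivistic proof replaces an occurrence of $\phi(t_1,\ldots,t_n)$ wherever it sits as a subformula, any proof of $\phi\Yields\psi$ can be lifted to a proof of $C[\phi]\Yields C[\psi]$ for any positive context $C$ by repeating each step inside $C$. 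This is precisely the point of deep inference. Third, each of the lattice, distributivity, complementarity, and equality rules of figure 1 (that is, rules (1)--(9), (12), (15), (16)--(23)) becomes a single conditional axiom in figure 2, and its sequent-calculus rule is simulated in one deep-inference step. Fourth, the quantifier rules (10), (11), (13), (14) become axiom schemes indexed by terms $t$: roughly, $\phi(t)\Yields\exists x\,\phi(x)$ and $(\forall x\in s\,\phi(x))\And t\in s\Yields\phi(t)$, together with the corresponding left-side versions that serve as the elimination forms. Fifth, I would handle the axioms $(0)$ of $T$ trivially, since they are built into the positivistic proof format by definition.

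\textbf{Main obstacle.} The delicate step is the simulation of the quantifier rules that carry eigenvariable conditions in figure 1 (the right-rule for $\forall x\in s$ and the left-rule for $\exists x$). In a sequent calculus these rules conclude, from a derivation of $\phi(a)\Yields\psi$ with $a$ not free in $\psi$ or in $s$, a derivation of $(\exists x\,\phi(x))\Yields\psi$; similarly for bounded universal introduction on the right. I expect to discharge this by treating the free variables appearing in a positivistic proof as fixed parameters (as discussed in \cref{section: remarks}): any positivistic proof of $\phi(a)\Yields\psi$ in which $a$ does not appear in $\psi$ or $s$ yields, by renaming $a$ to an arbitrary term, a uniform procedure that I can package as a proof of $\exists x\,\phi(x)\Yields\psi$ using a single deep-inference application of an appropriate quantifier axiom in figure 2, e.g., an axiom scheme saying that $\exists x\,\phi(x)\Yields\psi$ whenever $\phi(v)\Yields\psi$ is an axiom (or derivable in one step), applied at the outermost level. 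Getting this encoding correct while staying inside the syntactically rigid format of deep inference is where I expect most of the bookkeeping to lie.

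\textbf{Conclusion.} Once every rule of figure 1 has been simulated, the contrapositive is immediate: if $\chi\Yields\chi'$ has no finitary positivistic proof from $T$ and the logical axioms of figure 2, then a fortiori it is not derivable in figure 1, so by the previous lemma there is a model $M$ of $T$ in which $\chi\Yields\chi'$ fails on some tuple.
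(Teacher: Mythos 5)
Your overall strategy coincides with the paper's: induct on the figure-1 derivation supplied by the preceding lemma and simulate each rule by deep inference from the figure-2 axioms. The structural rules (empty proof, concatenation), the lifting of a proof into a positive context, and the rules that correspond to single axioms are handled exactly as in the paper. The genuine gap is in the eigenvariable rules (11) and (14), which you correctly single out as the main obstacle but do not actually resolve. Your proposed fix --- ``renaming $a$ to an arbitrary term'' to obtain a ``uniform procedure,'' packaged by ``an axiom scheme saying that $\exists x\,\phi(x)\Yields\psi$ whenever $\phi(v)\Yields\psi$ is an axiom or derivable in one step'' --- does not work within the theorem as stated: no such scheme occurs in figure 2 (so invoking it changes the statement), the premise of the eigenvariable rule is in general derivable only by a long proof rather than ``in one step,'' and a positivistic proof is a single fixed finite sequence of formulas, so a family of proofs indexed by terms is not itself a proof object. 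The paper's key idea is to \emph{bind} the eigenvariable rather than rename it: given the positivistic proof $\chi_0\Yields\cdots\Yields\chi_n$ of $\phi^y_a\Yields\psi$, choose a variable $x$ not occurring anywhere in that proof and re-run the whole proof under the quantifier, passing to $\exists x\,(\chi_0)^a_x\Yields\cdots\Yields\exists x\,(\chi_n)^a_x$; by deep inference each step is still an application of the same axiom, and freshness of $x$ is what prevents variable capture (this is exactly why $y$ itself cannot be reused). Since $a$ and $x$ do not occur in $\chi_n=\psi$, the conditional $\exists x\,(\chi_n)^a_x\Yields\psi$ is one application of axiom (11), and $\exists y\,\phi\Yields\exists x\,(\chi_0)^a_x$ is obtained by short applications of axioms (10) and (11).

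The bounded-universal rule (14) needs a further device you do not address: axiom (14) only yields $\psi\Yields\forall x\in s\,(\psi\And x\in s)$, so one must carry the extra conjunct $x\in s$ through the re-run proof, working with formulas $\forall x\in s\,((\chi_i)^a_x\And x\in s)$, and only at the end use axioms such as (13) and (15) to replace $(\chi_n)^a_x\And x\in s$ by $\phi$ and recover $\forall y\in s\,\phi$. Finally, note that rule (0) of figure 1 is the substitution rule $\phi\Yields\psi\,/\,\phi^a_t\Yields\psi^a_t$, not the rule introducing axioms of $T$; its simulation (substitute $t$ for $a$ throughout a given positivistic proof and check that every step remains an application of the same axiom, including under nested substitutions) is a case the paper verifies explicitly and your sketch omits.
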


\begin{proof}
Proof is by induction on the length of the derivation produced by the preceding lemma. If the terminal conditional is obtained by the axiomatic rule (-2), it has a proof in zero steps. If the terminal conditional is obtained by the rule (-1), then it has a proof obtained by concatenating the proofs of the input conditionals.

If the terminal conditional is obtained by the rule (0), then it has a proof obtained by substituting $t$ for $a$ in every formula of the proof of the input conditional. Indeed, if $\chi_{i} \Yields \chi_{i+1}$ is an application of some axiom $\phi \Yields \psi$, then $(\chi_{i})^a_t \Yields (\chi_{i+1})^a_t$ is also an application of this axiom. Specifically, if a subformula of the form $\phi^{\overline b}_{\overline s}$ is replaced by $\psi^{\overline b}_{\overline s}$ to infer $\chi_{i+1}$ from $\chi_{i}$, then a subformula of the form $(\phi^{\overline b}_{\overline s})^a_t$ is replaced by $(\psi^{\overline b}_{ \overline s})^a_t$ to infer $(\chi_{i+1})^a_t$ from $(\chi_i)^a_t$. Note that the terms in $\overline s$ may have bound variables, and that $a$ may be among the variables of $\overline b$, but the term $t$ has no bound variables, and the double substitutions in $(\psi^{\overline b}_{ \overline s})^a_t$ may be resolved as single substitutions.

If the terminal conditional is obtained using an axiomatic rule (1)-(23), then it has a proof obtained by applying the corresponding axiom. If the terminal conditional is obtained by the rule (5), then it has a proof obtained by applying axiom (5), and the running the proofs of the input conditionals below the new conjunction symbol. Similarly, if the terminal conditional is obtained by rule (8), then it has a proof obtained by running the proofs of the input conditionals below the highest disjunction symbol, and then applying axiom (8).

If the terminal conditional is obtained by the rule (11), then it has a proof obtained essentially by running the proof of the input conditional below the highest existential quantifier, and then applying axiom (11). Fix a proof $\chi_0 \Yields \cdots \Yields \chi_n$ of the input conditional $\phi^y_a \Yields \psi$. Let $x$ be a variable that does not occur in this proof; then the sequence 
$$\exists x\: (\chi_0)^a_{x}\Yields  \exists x\: (\chi_1)^a_{x}\Yields \cdots\Yields \exists x\: (\chi_n)^a_{x}$$
is also a proof. The same observation may fail for $y$ in place of $x$, because $a$ may be in the scope of a quantifier on $y$ in some $\chi_i$. We now observe that $\exists x\: (\chi_n)^a_{x} \Yields \psi$ is an application of axiom (11), since neither $x$ nor $a$ occurs in $\chi_n = \psi$. We also have a proof of $\exists y\: \phi \Yields \exists x\: (\chi_0)^a_{x}$:
$$\exists y\: \phi = \exists y\: (\phi^y_a)^a_y = \exists y\: (\chi_0)^a_y =
\exists y\: ((\chi_0)^a_x)^x_y \Yields \exists y\: \exists x\:(\chi_0)^a_x \Yields \exists x\:(\chi_0)^a_x$$
We apply axiom (10) in the first inference; we apply axiom (11) in the second inference, since $y$ is not free in $\exists x\:(\chi_0)^a_x$.

If the terminal conditional is obtained by the axiomatic rule (13), then it has a proof using axiom (13) and other axioms.  If the terminal conditional is obtained by the rule (14), then it has a proof obtained essentially by applying axiom (14), and then running the proof of the input conditional below the highest conjunction. Fix a proof $\chi_0 \Yields \cdots \Yields \chi_n$ of $\psi \Yields a \not \in s \Or \phi^y_a$. Let $x$ be a variable that does not occur in this proof; then the sequence
$$\forall x \in s\: (\chi_0)^a_x \And x \in s \Yields \forall x \in s\: (\chi_2)^a_x \And x \in s\Yields \cdots \Yields \forall x \in s\: (\chi_n)^a_x \And x \in s$$
is also a proof. Since neither $x$ nor $a$ occur in $ \chi_0  = \psi$, the conditional $$\psi \Yields \forall x \in s\: (\chi_0)^a_x \And x \in s$$ is an application of axiom (14). We also have a proof of $\forall x \in s\: (\chi_n)^a_x \And x \in s \Yields \forall y \in s\: \phi$:
$$\forall x \in s\: (\chi_n)^a_x \And x \in s \Yields \forall y \in s\: y \in s \And [\forall x \in s\: (\chi_n)^a_x \And x \in s] \Yields \forall y \in s\: (\chi_n)^a_y \And y \in s$$
$$\Yields  \forall y \in s\: ((y \not \in s \Or \phi) \And y \in s) \Yields \forall y \in s\: \phi$$
\end{proof}

We modify the list of logical axioms in figure 2 to remove the distinction between free and bound variables.

\begin{corollary}[$\ZFC$]\label{block: completeness}
The above theorem holds also for the list of logical axioms in figure 3, where we make no distinction between free and bound variables, i. e., each variable is permitted to appear in a free or a bound position.
\end{corollary}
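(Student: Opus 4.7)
My plan is to deduce the corollary by verifying that the Figure~3 axiom list is simply an extensional enlargement of the Figure~2 list: each Figure~2 axiom instance remains a valid instance of the corresponding Figure~3 axiom. Once this inclusion is established, the dichotomy of the preceding theorem transfers immediately, since if $\chi \Yields \chi'$ has no Figure~3 proof, then \emph{a fortiori} it has no Figure~2 proof, and the theorem supplies a model $M$ of $T$ on which $\chi \Yields \chi'$ has a false instance.

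The concrete work is a schema-by-schema check. The propositional schemata and the substitution schema transfer without change, since they do not involve the free/bound distinction. The quantifier schemata of Figure~2, which are the axiomatic counterparts of rules (10)--(15) of Figure~1, are stated with a free variable $a$ substituted for a bound variable $x$, together with appropriate freshness side conditions. In Figure~3 these conditions are restated uniformly in terms of ``occurs free'' and ``occurs bound'', with substitution read modulo $\alpha$-conversion so that variable capture is automatically avoided. Thus the Figure~2 instances are precisely those Figure~3 instances in which the substituted variable happens to be syntactically distinguished, so that every finitary positivistic proof using the axioms of Figure~2 is, as a sequence of formulas, already a proof using the axioms of Figure~3.

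The only delicate point is verifying that this reinterpretation genuinely relaxes each quantifier axiom rather than altering it: for each schema I would confirm that every freshness requirement present in Figure~2 is preserved or implied by the analogous requirement in Figure~3, so that no step of a Figure~2 proof becomes illegal once the syntactic free/bound distinction is dropped. I expect this case analysis to be routine, and once it is in hand, the proof of the corollary reduces to a one-line appeal to the theorem.
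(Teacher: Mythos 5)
Your proposal gets the direction of the difficulty backwards, and in doing so it skips the entire content of the paper's argument. Figure 3 is not an extensional enlargement of Figure 2. When the two alphabets are merged, the quantifier and substitution axioms must carry \emph{new} side conditions precisely because a variable may now occur both free and bound in the same formula: the paper explicitly says that axioms (11), (12), (14), (15), and (23) must be restricted to remain valid (e.g.\ axiom (23), $x = y \And \phi^z_x \Yields \phi^z_y$, needs the proviso that $x$ and $y$ be substitutable for $z$ in $\phi$, a capture issue that simply cannot arise in Figure 2, where terms are built only from free variables). Moreover the Figure 3 schemata are not the Figure 2 schemata with a weaker proviso: the term substitutions are gone (compare (13) in the two figures), and the general case is recovered only through the notion of \emph{application}, i.e.\ deep inference on substitution instances. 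So the claimed inclusion ``every Figure 2 instance is a Figure 3 instance,'' and hence your one-line \emph{a fortiori} transfer, does not hold as stated. Your appeal to reading substitution ``modulo $\alpha$-conversion'' is also not available: in this system a proof is a literal sequence of formulas and an application replaces a literal subformula, so $\alpha$-equivalence is not built in.

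That last point is exactly the missing idea. The completeness proof for Figure 2 presupposes a bijective split of variables into free and bound ones, while the given theory $T$ and the conditional $\chi \Yields \chi'$ live in the one-alphabet syntax; to transfer the result one must show that the restricted Figure 3 axioms still prove $\chi \Equivalent \grave\chi$ whenever $\grave\chi$ renames the bound variables of $\chi$ to fresh ones (and likewise for $\chi'$), by induction on complexity. The paper does this by exhibiting explicit positivistic derivations of the renaming equivalences for $\exists x\:\phi$ and for $\forall x \in z\:\phi$ from the Figure 3 axioms themselves. Without establishing this derivable bound-variable renaming inside the system, the ``routine case analysis'' you defer to cannot be completed, and the corollary does not reduce to the theorem.
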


\begin{proof}
In the application of an axiom $\phi \Yields \psi$, we replace a substitution instance of $\phi$ by a substitution instance of $\psi$, so in figure 2 we may replace schematic term variables by schematic variable variables, and in fact, by simple variables. In our proof of completeness we have implicitly assumed a bijective correspondence between the free variables and the bound variables, as formally variables may occur as both free and bound in even a single axiom of the given theory $T$. In identifying corresponding pairs of variables, we must place some restriction on axioms (11), (12), (14), (15), and (23) to preserve their validity. It remains to show that we may find a proof of $\chi \Yields \chi'$ even with these restrictions. Since the restrictions are relevant only when the same variable appears both free and bound in the same axiom, it is enough to find proofs of $\chi \Equivalent \grave{\chi}$ whenever $\grave \chi$ obtained from $\chi$ by replacing each bound variable with a new variable, and likewise for $\chi'$. This follows by induction on complexity. Let $\grave x$ be a new variable.

$$\exists x\: \phi \Yields \exists x\: \exists \grave x\: \phi^x_{\grave x} \Yields \exists \grave x\: \phi^x_{\grave x} \Yields \exists \grave x \: \exists x\: (\phi^x_{\grave x})^{\grave x}_x \Yields \exists \grave x\: \exists x\: \phi  \Yields \exists  x\: \phi $$

\begin{align*}\forall x \in z\: \phi \Yields &\forall \grave x \in z\: (\forall x \in z\: \phi) \And \grave x \in z
\Yields \forall \grave x \in z\: \phi^x_{\grave x} \\ \Yields &\forall x \in z\: (\forall \grave x \in z\:\phi^x_{\grave x}) \And x \in z \Yields  \forall x \in z\: (\phi^x_{\grave x})^{\grave x}_x \Yields \forall x \in z\: \phi
\end{align*}
\end{proof}

The completeness theorem can also be proved by reducing classical logic to positivistic logic via the cut-elimination theorem. This approach has the advantage that it justifies the use of classical logic in a finitary universe, where the completeness theorem itself does not hold. The proof does not appear in these notes, but we include a description of the system $\lL\Sigma_{\omega \omega}(\tau)$, which forms an intermediate step in the argument.

\section{the base theory}\label{section: base theory}

The positivistic approach rejects axioms exclusively on the basis of their logical complexity. Nevertheless, we will refer to a ``base theory'' that is intended to include uncontroversial axioms about basic set-theoretic constructions. We shy away from fully specifying the base theory, to a lesser degree out of principle, and to a greater degree out of practical necessity. The principle just mentioned is that the positivistic approach is not tied to any particular theory. The practical consideration is that it is easy to misjudge the foundational assumptions necessary for a given argument, particularly the quantity of induction. I have not had a chance to comb over the arguments to be certain of their foundational assumptions, but I hope that the reader can agree that the arguments are indeed basic.

At a minimum, our base theory includes $\PRS$, the theory of primitive recursive set functions, introduced by Rathjen \cite{Rathjen} as an adaptation of $\PRA$ to the set-theoretic setting. Primitive recursive set functions on pure sets are a natural generalization of primitive recursive functions on the natural numbers, first extensively studied by Jensen and Karp. I like a variant of this formalization that takes pairing, union, projection, and $\Delta_0$ separation as initial constructions, and introduces new constructions using composition, image, and recursion; these axioms are given in figure 7.

Generalizing the theorems for $\PRA$, Rathjen showed that a $\Sigma_1$ formula in the language of set theory defines a primitive recursive set function if and only if $\mathsf{KP}^-$ with $\Sigma_1$ foundation proves that the formula is a function class; $\mathsf{KP}^-$ is Kripke-Platek set theory with the foundation schema replaced by the much weaker set foundation axiom, i. e. regularity axiom. Rathjen also showed that a $\Pi_2$ sentence in the language of set theory is a theorem of $\PRS$ if and only if it is a theorem of $\KP^-$ with $\Sigma_1$ foundation. Instances of the $\Sigma_1$ foundation schema are not $\Pi_2$, so the latter theory is not positivistic. However, it is possible to present $\PRS$ as a positivistic theory in the language of set theory by coding the primitive recursive set functions by $\Sigma$ formulas, and adding axioms expressing their totality. This fact is significant to us, because it shows that primitive recursive set functions are procedures as long as the membership relation is given by a procedure.

Our base theory includes two natural axioms that do not follow from $\PRS$. The first is $\Delta_0$ collection:
$$\forall x \in X\: \exists y \: \phi \Yields \exists Y\: \forall x \in X \: \exists y \in Y\: \phi.$$
The second is the well-ordering principle: every set is equinumerous to a von Neumann ordinal. The $\Delta_0$ collection schema may be justified on the basis that the set $X$ is an actual totality, which may be surveyed, with witnesses for $\phi$ selected. The well-ordering principle may be justified on the basis that while we talk about pure sets, the actual objects of our universe are well-ordered sequences of bit-values, i. e., sets of ordinals; thus, not only can a set be well-ordered, but on the ``machine level'' a set is represented by a well-ordered list of its elements. We will rarely consider universes that do not satisfy the well-ordering principle.

With the addition of the $\Delta_0$ collection schema, we see that our base theory includes $\KP^-$. It certainly does not include all of $\KP$, because most instances of the full foundation schema are not expressible by a positivistic conditional. However, the base theory is sufficient for several of the most basic consequences of $\KP$. Specifically, the base theory is sufficient for the proofs of the $\Sigma$ reflection schema, the $\Sigma$ collection schema, and the $\Delta$ separation rule, which are given in Barwise \cite[s. I.4]{Barwise}. The $\Sigma$ reflection schema expresses that each $\Sigma$ formula $\phi$ is equivalent to $\exists x\: \phi^{(x)}$, where $\phi^{(x)}$ is obtained from $\phi$ by replacing every \emph{unbounded} existential quantifier $\exists v$ by $\exists v \in x$. The $\Sigma$ collection schema is just like the $\Delta_0$ collection schema given above, but for all $\Sigma$ formulas $\phi$. The $\Delta$ separation rule is the rule that if we derive that $\Sigma$ formulas $\phi_\Truth$ and $\phi_\Falsehood$ are complementary, then we can derive that each set has a subset of elements satisfying $\phi_\Truth$:
$$\infer{\Truth\Yields \exists y\:(\forall z \in y\: z \in x) \And (\forall z \in x\: \phi_\Falsehood \Or z \in y) \And(\forall z \in x\: \phi_\Truth \Or z \not \in y)}{\Truth \Yields \phi_\Truth \Or \phi_\Falsehood & \phi_\Truth \And  \phi_\Falsehood \Yields \Falsehood}$$
The $\Delta$ separation principle as stated in Barwise section I.4 \cite{Barwise} is not expressible by a positivistic conditional, but its proof shows that the the above rule is admissible in every extension of the base theory. The $\Delta$ separation rule allows induction for $\Delta$ formulas: the following rule is admissible for every extension of the base theory:
$$\infer{\exists x\: \phi_\Truth \Yields \exists x\: \phi_\Truth \And \forall y \in x \: (\phi_\Falsehood)^x_y}{\Truth \Yields \phi_\Truth \Or \phi_\Falsehood & \phi_\Truth \And  \phi_\Falsehood \Yields \Falsehood}$$

The axiom of infinity, expressing the existence of a limit ordinal, is not formally an axiom
of our base theory, because we wish to include the universe of hereditarily finite sets $H_\omega$. In a universe that does contain infinite sets, it is natural to investigate infinitary logic, on the intuition that it is appropriate to the ideal agent for that universe. Consequently, cut-elimination for infinitary logic will be our main tool in a number of the arguments that follow. Feferman obtained a proof-theoretic proof of cut-elimination for an infinitary system that permits countable conjunctions and disjunctions. The proof appears highly constructive, and I expect it to go through in the extension of $\PRS$ just described, with only minor modifications. It follows a typical framework of a syntactic proof of cut-elimination, by indunction on cut-rank, with a subinduction on derivation height, and with subsubinduction on the surreal sum of heights of subdervations, when the cut-rule is encountered. In \cref{section: universe}, we will apply cut-elimination for a variant of Feferman's system that includes more initial sequents; I believe this variant requires only minor additions to Feferman's argument. In \cref{section: assertibility} we will apply cut-elimination for the intuitionistic version of Feferman's system. Feferman explicitly writes that his development does not ``go into'' non-classical fragments; however, I believe that Feferman's argument can be adapted to intuitionistic logic in the usual starightforward way.

In the unexpected situation that Feferman's argument does not go through in the base theory we have described, we will extend $\PRS$ still further, to accommodate it. This extension should not significantly undermine the credibility of our base theory, because Feferman's procedure is certainly intuitively possible. At the very least, Feferman's argument establishes the consistency of cut-elimination with our base theory; the universe $H_{\omega_1}$ of hereditarily countable sets is our canonical model for the base theory with the axiom of infinity. Thus, countable cut-elimination is effectively an axiom of our base theory. There are cut-elimination results for systems that permit arbitrary infinitary conjunctions and disjunctions, such as those in Takeuti's book. However, we will work with variants of Feferman's system, and we will generally apply cut-elimination for these variants under the additional assumption that every set is countable.

It is natural to extend our base theory with reflection principles. The validity principle for the base theory is the standard example: it expresses that the conclusion of any proof whose assumptions are true is itself true. A set-theoretic variant expresses that each set is an element of some transitive model of the base theory. Such a transitive model is automatically a model of $\mathsf{KP}$. In fact, we obtain a conservation result for $\mathsf{KP}$, since every $\Pi_2$ formula in the language of set theory that is true in each of these transitive models applies to the universe as a whole.

If we stretch our relaxed attitude toward axioms to its extreme, we might accept all the conditionals modelled by some model universe as axioms, and investigate the classical first-order sentences that follow from these axioms.

\section{three model universes}\label{section: model universes}

We examine three model universes, models of the base theory: the levels $H_\kappa$ for $\kappa = \omega$, for $\kappa = \omega_1$, and for $\kappa$ an inaccessible cardinal. We might identify these models universes with finitist, predicativist, and realist foundations.

\subsection{$H_\omega$}

The theory that we obtain by adding the axiom that each set if finite, i. e., equinumerous to a natural number, is equivalent to $\PRA$, the theory of primitive recursive arithmetic, but this takes some work to prove. The Ackermann coding is a simple bijection between hereditary sets and natural numbers that is a primitive recursive set function. We may formulate $\PRA$ itself as positivistic theory; the axioms of postivistic logic make no assumptions about the binary relation $\in$, so we may simply replace it with $<$. The axioms of $\PRA$ may be naturally formulated using conditionals; for example we may render $\Delta_0$ induction as
$$\phi(0, \overline z) \And \forall x < y\: \tilde\phi(x, \overline z) \Or \phi(x+1, \overline z) \Yields \forall x < y\: \phi(x),$$
for each $\Delta_0$ formula $\phi$, with the negation $\tilde \phi$ obtained by switching dual connectives and predicate symbols. The Parsons-Friedman theorem \cite[s. IX.3]{Simpson} implies that every conditional provable in $\WKL$ is positivistically provable from the axioms of $\PRA$.

The original formulation of $\PRA$ is essentially positivistic, in the sense that while it is not prima facie positivistic, it can be easily adjusted to be positivistic. The reason it is not, strictly speaking, positivistic is that the free variables in a deduction of $\PRA$ cannot be understood as having the same values throughout the proof. It is clear that the induction variable in the premise of the induction rule must be read as being implicitly universally quantified in order to justify the application of the induction rule. Thus, a deduction of $\PRA$ consists of $\Pi$ sentences, rather than $\Sigma$ sentences. However, each deduction of $\PRA$ can be read as a positivistic proof if it is read backwards: if the terminal formula fails, then one of the predecessors used to derive it fails, and so on, until the set of possibly failing formulas is found to be empty, i. e., a contradiction is derived.

We remark that Takahashi \cite{Takahashi} identified $\Sigma$ sentences in the language of set theory as the meaningful sentences in the context of finitistic foundations based on hereditarily finite sets. However, Takahashi's deductive system is not positivistic, again because its free variables do not maintain fixed values over the course of deduction. Specifically, a formula of the form $\forall x \in y\: \phi$ can be derived from a deduction that assumes $x \in y$ and proves $\phi$. This bounded variant of the universal generalization rule is a covert appeal to the validity of the system. While these appeals to validity are ultimately innocuous, it is surely a basic principle of modern mathematical logic that no system should assume its own validity.

\subsection{$H_{\omega_1}$} We obtain a basic theory of hereditarily countable sets by adding the axiom of infinity, concretely the existence of a least nonzero ordinal that is not a successor, and the axiom that every set set equinumerous to this ordinal, or to a smaller ordinal. The position that every set is countable is well established; I suppose it is associated most strongly with predicativism, and related denials of the powerset axiom. I am drawn to the conclusion that indeed, if there are infinite sets, then they are all countable, but I am not persuaded by predicativist appeals, possibly because I do not understand them.

My intuition of the positivistic approach is that, though it is conservative with regard to semantics, it is liberal with regard to ontology. Whatever the objects in the universe, they are taken to exist in themselves, rather than be generated by a mental process. Furthermore, although the positivistic approach has the formal features of potentialism, I do not imagine the emergence of objects that seems to underly potentialist intuitions. When I imagine a mathematical universe, I imagine its objects as existing fully; the meaninglessness of a universal proposition does not arise from a dynamic growth of the universe, but rather from an inability to survey all of its objects. Ultimately, this departure from the narrative of potentialism may be entirely aesthetic, i. e., of no practical consequence.

I will argue that the full universe of sets looks very much like $H_{\omega_1}$, but the reader is encouraged to keep a mental distinction between $H_{\omega_1}$, and the full universe of sets. The model universe $H_{\omega_1}$ is an initial segment of the ``ambient'' transitive model of $\ZFC$ in which we have been investigating the notion of a model universe. The position that every set is countable, for which I will argue, is not an exclusive position that rejects the many uncountable sets that are used in ordinary mathematics; rather, it is an inclusive position that accepts the existence of a bijection between the set of natural numbers and its powerset in this transitive model.

As a model universe, $H_{\omega_1}$ has a number of attractive features. It is the only model universe of the form $H_{\kappa}$ that satisfies the two completeness principles given in \cref{section: universe}; infinitary logic is well-behaved, and forcing for transitive models is a provable principle via Rasiowa-Sikorski lemma. Furthermore, by the L\'evy absoluteness principle, every $\Sigma(L_{\omega \omega}(= , \neq, \in , \not\in))$ conditional valid in the ambient model of $\ZFC$ is also valid in $H_{\omega_1}$, so within the ambient model, we cannot express that the ambient model contains uncountable sets using $\Sigma(L_{\omega \omega}(= , \neq, \in , \not\in))$ conditionals.

\subsection{$H_\kappa$}

If $\kappa$ is an inaccessible cardinal, then $H_\kappa$ is a model of $\ZFC$. However, the ideal agent for $H_\kappa$ may not accept the validity of $\ZFC$ in that universe, because a number of its axioms have high logical complexity. It is of course possible to formulate any theory as a $\Pi_2$ theory, or even as a $\Pi_1$ theory by adding nonlogical symbols, essentially Skolem functions, with the implicit claim that these nonlogical symbols denote possible procedures. However, if we take our notion of procedure to be the one given by the language of set theory, i. e., $\mathbf \Sigma(L_{\kappa \omega}(= , \neq, \in , \not \in))$, then there is no powerset procedure in $H_\kappa$:

\begin{proposition}[$\ZFC$]\label{block: powerset} Let $\kappa$ be a strongly inaccessible cardinal. The powerset operation is not definable in $H_\kappa$ by a $\mathbf \Sigma(L_{\kappa \omega}(=, \neq, \in , \not \in))$ formula.
\end{proposition}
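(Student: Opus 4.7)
The plan is proof by contradiction. Suppose $\phi(x, y)$ is a $\mathbf{\Sigma}(L_{\kappa \omega}(=, \neq, \in, \not\in))$ formula with $H_\kappa \models \phi[a, b]$ if and only if $b = \mathcal{P}(a)$. The strategy is to build, by a downward Skolem hull construction, a small transitive $B \subseteq H_\kappa$ inside which the relevant fragment of $\phi$ is preserved; upward absoluteness of $\mathbf{\Sigma}$ formulas will then force $\mathcal{P}(\nu)$ to appear inside $B$ for a carefully chosen $\nu$, and a cardinality gap between $|B|$ and $|\mathcal{P}(\nu)|$ will produce the contradiction.

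First I would use the inaccessibility of $\kappa$ to pick a strong limit cardinal $\nu < \kappa$ for which every parameter of $\phi$ lies in $V_\nu$ and the number of subformulas of $\phi$ is less than $\nu$; such $\nu$ exists because $\phi$ has fewer than $\kappa$ subformulas and parameters of rank less than $\kappa$, and the strong limit cardinals are cofinal in $\kappa$. Note that $|V_\nu| = \nu$, whereas $|\mathcal{P}(\nu)| = 2^\nu > \nu$. Then, starting from $A_0 = V_\nu \cup \{\nu\}$, I would iterate $\omega$ many stages of closure under transitivity and under Tarski-Vaught witnesses for the existential subformulas of $\phi^*(x) := \exists y\: \phi(x, y)$. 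Since $\phi^*$ has fewer than $\nu$ existential subformulas and every formula has finitely many free variables, each stage adds at most $\nu$ elements, so the resulting $B$ is transitive of cardinality $\nu$. Combined with transitivity (which makes bounded universal quantifiers absolute) and with the shared index sets of the infinitary conjunctions and disjunctions, the Tarski-Vaught closure yields $B \models \psi[\vec a]$ iff $H_\kappa \models \psi[\vec a]$ for every subformula $\psi$ of $\phi^*$ and every tuple $\vec a$ from $B$.

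The contradiction then falls out quickly. In $H_\kappa$, the witness $\mathcal{P}(\nu)$ shows $H_\kappa \models \phi^*(\nu)$, so $B \models \phi^*(\nu)$ yields some $b \in B$ with $B \models \phi[\nu, b]$. Upward absoluteness of $\mathbf{\Sigma}$ formulas between transitive sets---a routine induction on complexity, with bounded universals handled by transitivity, existentials by lifting witnesses, and infinitary connectives by the shared index sets---promotes this to $H_\kappa \models \phi[\nu, b]$, so $b = \mathcal{P}(\nu)$ and $\mathcal{P}(\nu) \in B$. Transitivity of $B$ then forces $\mathcal{P}(\nu) \subseteq B$, giving $2^\nu = |\mathcal{P}(\nu)| \leq |B| \leq \nu$, which is absurd. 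The main obstacle is producing a Skolem-type submodel for $L_{\kappa\omega}$ of cardinality strictly below $\kappa$---standard downward L\"owenheim-Skolem for $L_{\kappa\omega}$ only descends to $\kappa$---which I handle by taking the Tarski-Vaught closure only for the subformulas of the single formula $\phi^*$, a set of size less than $\nu$, rather than for the full language.
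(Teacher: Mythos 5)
The strategy has a genuine gap at its crucial cardinality bound. You claim that ``each stage adds at most $\nu$ elements,'' but your closure operation adds \emph{actual} Tarski--Vaught witnesses from $H_\kappa$ and then closes under transitivity, and a witness can have transitive closure of any cardinality below $\kappa$. Worse, under your reductio hypothesis the unique $b$ with $H_\kappa \models \phi[\nu,b]$ is $\mathcal{P}(\nu)$ itself, so the very first round of witness-closure applied to the top-level existential $\exists y\,\phi(\nu,y)$ is forced to put $\mathcal{P}(\nu)$ into $B$, and the transitivity step then adds all $2^\nu$ of its elements. Hence the constructed $B$ has cardinality at least $2^\nu$ (though still below $\kappa$), the inequality $2^\nu \le |B| \le \nu$ never arises, and no contradiction is obtained: the construction simply converges to a perfectly consistent large transitive set. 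In other words, a transitive set of size $\nu$ containing $\nu$ that is Tarski--Vaught closed for $\phi^*$ with \emph{genuine} witnesses cannot exist, so it cannot be built; the contradiction has to come from a reflection principle proved independently of the hull construction. (A secondary slip: $|V_\nu|=\beth_\nu$, which equals $\nu$ only when $\nu$ is a beth fixed point, not for an arbitrary strong limit; seeding with $H_\nu$ or a beth fixed point fixes this, but it is moot given the main problem.)

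What is missing is a device that reflects the existential statement downward \emph{without} importing its literal witness. The paper does this by observing that truth of $\mathbf\Sigma(L_{\kappa\omega})$ sentences is definable by a finitary $\Sigma(L_{\omega\omega})$ formula and invoking $\Sigma$ reflection into $H_{\lambda^+}$ (L\'evy absoluteness) for a regular $\lambda<\kappa$ with $\phi \in H_{\lambda^+}$: one gets some $p\in H_{\lambda^+}$ with $H_{\lambda^+}\models\phi[\lambda,p]$, upward absoluteness of $\mathbf\Sigma$ formulas gives $H_\kappa\models\phi[\lambda,p]$, so $p$ would be $\mathcal{P}(\lambda)$, contradicting $p\in H_{\lambda^+}$ --- note the contradiction uses the definition of $H_{\lambda^+}$ by hereditary cardinality, not a bound on $|H_{\lambda^+}|$ (which is $2^\lambda$). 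If you want to keep your more hands-on route, replace the transitive hull of actual witnesses by an elementary-submodel-plus-Mostowski-collapse argument: collapse a small submodel containing $\mathrm{tc}(\{\phi,\lambda\})$ and the witness; the collapsed witness satisfies $\phi(\lambda,\cdot)$ in a small transitive set, and upward absoluteness then yields the same contradiction. Your upward-absoluteness and final steps are fine; it is the production of the small reflecting transitive set that your proposal does not actually achieve.
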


\begin{proof}
Suppose that there is a $\mathbf \Sigma(L_{\kappa \omega}(=, \neq, \in , \not \in))$ formula $\phi(x,y)$ such that $H_\kappa \models \phi[a,b]$ iff $b$ is the powerset of $a$, for all $a, b \in H_\kappa$. The formula $\phi$ is in $H_{\lambda^+}$ for some regular $\lambda < \kappa$. The transitive set $H_\kappa = V_\kappa$ satisfies the powerset axiom, so certainly $H_\kappa \models \exists y\: \phi(\underline \lambda,y)$. Truth for $\mathbf \Sigma(L_{\kappa \omega}(=, \neq, \in , \not \in))$ formulas is definable by a $\Sigma(L_{\omega\omega}(=, \neq, \in , \not \in))$ formula, so $H_\kappa \models \TT[ \exists y\: \phi(\underline \lambda,y)]$. Since $H_{\lambda^+}$ reflects $\Sigma$ formulas, we have that $H_{\lambda^+} \models \TT[ \exists y\: \phi(\underline \lambda,y)]$, so $H_{\lambda^+} \models \exists y\: \phi(\underline \lambda, y)$. We conclude that there is an element $p \in H_{\lambda^+}$ such that $H_\lambda \models \phi[\lambda,p]$, and therefore $H_\kappa \models \phi[\lambda,p]$. By assumption, $p$ is the powerset of $\lambda$, contradicting that $p$ is an element of $H_{\lambda^+}$.
\end{proof}

If we view the powerset operation as given by a procedure, we may add it to our vocabulary, together with other such operations. Our general method for adding procedures without adding objects is to fix a coding of elements outside the model by elements of the model; we then expand the vocabulary of that model by a truth predicate, interpreted as composing the genuine truth predicate with that coding. In this way, we may include the metamathematics of a model of $\ZFC$ into that model of $\ZFC$. Note that we are not simply adding a truth predicate in the usual way, because our truth predicate may be applied to formulas that contain that truth predicate, but at the expense restricting our reasoning to positivistic logic. If we code $\mathbf K(L_{\omega \omega}(= , \neq, \in ,\not \in))$ sentences with parameters in the model as $\mathbf \Sigma (L_{\omega \omega}(= , \neq, \in ,\not \in))$ sentences, by relativizing them to that model, then the added truth predicate will apply to these sentences also. Thus, we may retain classical logic for all the usual sentences of the language of set theory, while expanding the model with a self-applicable truth predicate.

\section{truth, correctness, and validity}\label{section: truth}

It is a basic fact that the truth of $\mathbf \Sigma$ sentences is definable by a $\Sigma(L_{\omega\omega})$ formula; this observation applies to $\mathbf \Sigma(L_{\infty \omega})$ sentences just as it does to $\mathbf \Sigma(L_{\omega \omega})$ sentences. Thus, there is a truth predicate in any model universe. For simplicity, we formalize the theory of truth for a vocabulary that contains no function symbols. We may generalize this development readily by adjusting our definition of verification to include a computation of values for the terms appearing in the given formula.

We distinguish between various notions of rightness in the following way. A $\mathbf \Sigma(L_{\infty\omega})$ sentence it \underline{true} if and only if it has a verification in the sense below. As we have emphasized, truth is an internal notion that can be formalized in a given mathematical universe; in contrast, the remaining notions of rightness are expressed by contionals, which may be proved, or taken as axioms, or verified \emph{in a model universe}, but which cannot be in themselves true. The \underline{correctness} of an intentional theory $\tau$ is the reflection principle ``if $\phi_0$ is true and $\phi_0 \Yields \phi_1$ is an application of an axiom of $\tau$, then $\phi_1$ is true''. By contrast, the \underline{validity} of an intentional theory $\tau$ is the reflection principle ``if $\chi_0$ is true and $\chi_0 \Yields \chi_1$ is provable in $\tau$, the $\chi_1$ is true''. The correctness of $\tau$ affirms the axioms of $\tau$, whereas the validity of $\tau$ affirms theorems of $\tau$. When the induction rule is not admissible, the former does not prove the later. Our use of the word ``validity'' is faithful its philosophical meaning: if the assumption of a proof is true then its conclusion is also true; however, it clashes with the standard metamathematical meaning. We will use the phrase ``logical validity'' for the later meaning.

\begin{definition}[base theory]
We define a \underline{verification} $V$ to be a set of $\mathbf\Sigma(L_{\omega \omega})$ sentences that is closed in the following sense:
\begin{enumerate}
\item If $\phi \And \psi$ is in $V$, then $\phi$ and $\psi$ are in $V$.
\item If $\phi \Or \psi$ are in $V$, then $\phi$ is in $V$ or $\psi$ is in $V$.
\item If $\exists x\: \phi(x)$ is in $V$, then a sentence of the form $\phi(\underline a)$ is in $V$.
\item If $\forall x \in \underline b\: \phi(x)$ is in $V$, then for each element $a \in b$, the formula $\phi(\underline a)$ is in $V$.
\item If $\underline a \in \underline b$ is in $V$, then $a \in b$. (Likewise for the other predicate symbols.)
\end{enumerate}
Let $\phi$ be a $\mathbf\Sigma(L_{\omega \omega})$ sentence. We define a \underline{verification of $\phi$} to be a verification that contains $\phi$. We define $\phi$ to be \underline{true} iff it has a verification; we write $\TT(\phi)$.
\end{definition}

There is a straightforward generalization of this definition that includes sentences with function symbols from a finite vocabulary: a verification should include equalities that compute the values of terms. It is more difficult to generalize this definition to an infinite vocabulary, due to the parenthetical remark in condition (5); any definition uses only a finite subset of the vocabulary. In the case of $\PRS$, the algorithms implicit in the primitive recursive function symbols may be unravelled by an appropriate definition of verification. Alternatively, an equation in $\PRS$ may be verified by verifying the corresponding $\mathbf \Sigma$ sentence in the language of set theory.

Our base theory is sufficient for verifying the standard semantic properties of the truth predicate:

\begin{proposition}[base theory]
For all $\mathbf \Sigma(L_{\Omega \omega})$ sentences $\phi$ and $\psi$,
\begin{enumerate}
\item $\TT(\phi \And \psi) \Equivalent \TT(\phi) \And \TT(\psi)$
\item $\TT(\phi \Or \psi) \Equivalent \TT(\phi) \Or \TT(\psi)$
\item $\TT(\exists v\: \psi(v)) \Equivalent \exists a\: \TT(\psi(\underline a))$
\item $\TT(\forall v \in \underline b\: \phi(v)) \Equivalent \forall a \in b\: \TT(\phi(\underline a))$
\item $\TT(\underline a \in \underline b) \Equivalent a \in b$ (Likewise for the other predicate symbols.)
\end{enumerate}
\end{proposition}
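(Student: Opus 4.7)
The plan is to prove each of the five equivalences separately, with the forward direction in each case following immediately from the closure clauses in the definition of verification, and the backward direction requiring the construction of a new verification. Throughout, the definition of $\TT$ as ``has a verification'' reduces both directions to existential manipulations.

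For the forward directions: if $V$ is a verification of $\phi \And \psi$, then by clause (1) we have $\phi, \psi \in V$, so $V$ itself witnesses both $\TT(\phi)$ and $\TT(\psi)$; wholly analogous arguments, using clauses (2), (3), and (5) respectively, handle the forward directions of (2), (3), and (5). For (4), if $V$ is a verification of $\forall v \in \underline b\: \phi(v)$, then by clause (4), for each $a \in b$ we have $\phi(\underline a) \in V$, so $V$ witnesses $\TT(\phi(\underline a))$ for every such $a$.

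For the backward directions of (1), (2), (3), and (5), the strategy is to adjoin the target sentence to the given verification(s). Given verifications $V_1 \ni \phi$ and $V_2 \ni \psi$, the set $V_1 \cup V_2 \cup \{\phi \And \psi\}$ is a verification of $\phi \And \psi$, since the only newly adjoined sentence is $\phi \And \psi$ and its two conjuncts lie in $V_1 \cup V_2$. Cases (2) and (3) are analogous: for (2), take $V \cup \{\phi \Or \psi\}$ from a verification of one disjunct; for (3), take $V \cup \{\exists v\: \psi(v)\}$ from a verification of some $\psi(\underline a)$. For (5), if $a \in b$ then the singleton $\{\underline a \in \underline b\}$ is trivially a verification.

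The main obstacle is the backward direction of (4), where we must merge a possibly set-sized family of verifications into a single one. From the hypothesis $\forall a \in b\: \TT(\phi(\underline a))$, for each $a \in b$ there exists a verification $V$ of $\phi(\underline a)$. I would apply $\Delta_0$ collection, which our base theory includes, to obtain a set $W$ such that for each $a \in b$ there is some $V \in W$ that is a verification of $\phi(\underline a)$. Set $V^* := \bigcup W \cup \{\forall v \in \underline b\: \phi(v)\}$. Closure of $V^*$ under clauses (1), (2), (3), and (5) follows from the closure of each member of $W$, since every sentence in $\bigcup W$ lies in some verification $V \in W$ and inherits its required witnesses from $V \subseteq V^*$. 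For clause (4), the only sentence requiring fresh attention is the adjoined $\forall v \in \underline b\: \phi(v)$, for which the defining property of $W$ guarantees that each $\phi(\underline a)$ with $a \in b$ lies in some $V \in W \subseteq V^*$; all other bounded universal sentences in $V^*$ reside in some $V \in W$ and inherit their witnesses from $V$. Hence $V^*$ is a verification of $\forall v \in \underline b\: \phi(v)$, as required.
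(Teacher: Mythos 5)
Your proof is correct and matches the paper's intent: the paper disposes of this proposition with ``Immediate from the definition,'' and your argument is exactly the unpacking of that remark (forward directions from the closure clauses, backward directions by adjoining the target sentence to a union of verifications). The only step that is not literally immediate is the backward direction of (4), and your appeal to collection (available in the base theory, which includes $\Delta_0$ and indeed $\Sigma$ collection) to gather the verifications $V$ of the sentences $\phi(\underline a)$ into a set $W$ is the right way to fill it.
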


\begin{proof}
Immediate from the definition.
\end{proof}

\begin{lemma}[base theory]
Let $\chi_0$ and $\chi_1$ be $\mathbf \Sigma(L_{\omega \omega})$ sentences. Assume $\chi_0 \Yields \chi_1$ is an instance of a logical axiom, and $\chi_0$ is true. Conclude $\chi_1$ is true.
\end{lemma}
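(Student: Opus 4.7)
The plan is a case analysis on which logical axiom $\chi_0 \Yields \chi_1$ instantiates, combined with an induction on the depth at which the axiom is applied via deep inference. Unpacking the definition of an application, $\chi_1$ arises from $\chi_0$ by replacing a subformula of the form $\phi(t_1,\ldots,t_n)$ by $\psi(t_1,\ldots,t_n)$, where $\phi(v_1,\ldots,v_n) \Yields \psi(v_1,\ldots,v_n)$ is a logical axiom from figure 3. Since $\chi_0$ and $\chi_1$ are sentences and the vocabulary contains no function symbols, each $t_i$ is either a bound variable inherited from an enclosing quantifier in $\chi_0$ or a parametric name. The argument splits into two ingredients: (i) every logical axiom preserves truth at the top level of any closed instance; (ii) replacing a subformula by a truth-equivalent one preserves the truth of the enclosing sentence.

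For (ii), I would induct on the depth of the replacement site. At depth zero the claim reduces directly to (i). At positive depth, consider the outermost connective or quantifier above the replacement site. If it is $\And$ with $\chi_0 = \chi_0' \And \chi_0''$ and the replacement taking place in $\chi_0'$, then clause (1) of the preceding proposition gives $\TT(\chi_0) \Equivalent \TT(\chi_0') \And \TT(\chi_0'')$ and likewise for $\chi_1$, so the inductive hypothesis on the pair $(\chi_0',\chi_1')$ closes the step; the $\Or$ case is dual by clause (2). If the outermost symbol is an unbounded $\exists x$ or a bounded $\forall x \in \underline b$, then clauses (3) and (4) reduce the truth of $\chi_0$ to the truth of a closed substitution instance $\chi_0'[\underline a/x]$ for some or every $a$; since substitution for $x$ commutes with the replacement of $\phi(t_1,\ldots,t_n)$ by $\psi(t_1,\ldots,t_n)$ after that substitution has been carried out on the $t_i$, the inductive hypothesis applies.

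For (i), I would check each logical axiom of figure 3 in turn, working with closed instances. The propositional lattice axioms governing $\And$ and $\Or$ follow from clauses (1) and (2) by elementary truth-table reasoning. The quantifier axioms follow from clauses (3) and (4): for instance, the axiom $\phi^x_{\underline a} \Yields \exists x\: \phi$ is verified by using $\underline a$ as a witness in the conclusion, and the axiom $\forall x \in \underline b\: \phi \And \underline a \in \underline b \Yields \phi^x_{\underline a}$ is verified by picking off the instance corresponding to $a \in b$. The quantifier-distributive axioms follow by pushing witnesses across positivistic connectives. The complementarity axioms for $=/\neq$ and $\in/\not\in$ use clause (5) together with the fact that these pairs of predicate symbols denote complementary relations in every model universe. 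The reflexivity, symmetry, and transitivity axioms for $=$ use that $=$ denotes genuine equality, and the congruence axiom is verified by a secondary induction on formula structure, using that truth is preserved when a parametric name is replaced by one denoting the same pure set.

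The main obstacle is the bookkeeping for substitution of terms that contain bound variables: when the replacement lies under several quantifiers, the terms $t_1,\ldots,t_n$ may mention variables bound above. This is handled cleanly by the observation that $\mathbf{\Sigma}(L_{\omega\omega})$ truth is defined only on closed sentences, so by the time the deep-inference induction reaches the replaced subformula, every $t_i$ that was once a bound variable has been substituted by the parametric name $\underline a$ of a pure set $a$ supplied by the semantic clauses of the preceding proposition, and the axiom is being applied to a closed instance $\phi(\underline{a_1},\ldots,\underline{a_n}) \Yields \psi(\underline{a_1},\ldots,\underline{a_n})$. With this reduction to the closed case, the verification of (i) supplies precisely what is needed to close the induction for (ii).
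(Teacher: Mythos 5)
Your ingredient (i) is essentially the paper's own proof of this lemma: a schema-by-schema check of the logical axioms of figure 3, carried out on closed instances using the semantic clauses for $\TT$ (equivalently, working directly with verifications). But note that the statement concerns only substitution \emph{instances} in the paper's terminology (parameters substituted for the free variables of an axiom, at top level); the deep-inference case is what the paper calls an ``application'' and is the subject of the immediately following lemma. Your ingredient (ii) therefore proves more than is asked, and it does so by a method the paper deliberately avoids: an induction on the depth of the replacement site whose induction hypothesis is ``if $\chi_0'$ is true then $\chi_1'$ is true''. That property involves the $\mathbf{\Sigma}$ truth predicate and is not $\Delta$, whereas the base theory supplies only the $\Delta$ induction rule (the paper leans on exactly this restriction elsewhere; it is the reason correctness does not yield validity). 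The paper's treatment of applications sidesteps structural induction entirely: it extracts from the instance lemma a primitive recursive map on verifications, transforms a verification $V_0$ of $\chi_0$ globally into a set $V_1$, and then checks the closure conditions of the definition of verification pointwise on the elements of $V_1$. So either restrict yourself to the top-level case, which is all that is claimed here, or replace your depth induction by that kind of verification-rewriting argument.

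The same foundational worry affects your congruence case $x = y \And \phi^v_x \Yields \phi^v_y$: the ``secondary induction on formula structure'' for exchanging parametric names denoting the same set is again induction for a truth-involving property, and it is also unnecessary. Names are canonical, so if a verification contains $\underline a = \underline b$ then $a$ and $b$ are the same object, the symbols $\underline a$ and $\underline b$ coincide, and $\phi^v_{\underline a}$ and $\phi^v_{\underline b}$ are literally the same sentence; there is nothing to prove. Two smaller points: the axioms are one-directional, so ``truth-equivalent'' should be ``truth-preserving'' --- what your argument actually uses is that $\mathbf{\Sigma}$ contexts are positive, hence monotone; and the complementarity cases should not appeal to ``every model universe'', since the lemma is internal to the base theory --- clause (5) together with the internally available dichotomies $a \in b \Or a \not\in b$ and $a = b \Or a \neq b$ is what does the work. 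Finally, in the distributivity schemas your clause-based check implicitly applies the very same positivistic axioms at the level of the truth predicate; that is legitimate, since $\TT$ is itself a $\Sigma(L_{\omega\omega})$ formula, but it is worth saying explicitly.
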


\begin{proof}
There is a separate argument for each logical axiom schema. As an example, we suppose that $\chi_0 \Yields \chi_1$ is an instance of the schema $x \in z \And \forall x \in z \: \phi \Yields \phi$. Thus, $\chi_0 \Yields \chi_1$ is equal to $\underline a \in \underline c \And \forall x \in \underline c \: \phi \Yields \phi^x_{\underline a}$ for some $\mathbf \Sigma(L_{\omega \omega})$ formula $\phi$ with a single free variable $x$, and objects $a$ and $c$. Let $V$ be a verification for $\underline a \in \underline c \And \forall x \in \underline c \: \phi$. The set $V$ contains $\underline a \in \underline c$, so $a \in c$. The set $V$ also contains $\forall v \in \underline c\: \phi$, so it also contains $\phi^x_{\underline b}$ for all $b \in c$. Thus, $V$ contains $\phi^x_{\underline a}$, so $V$ is a verification for $\phi^x_{\underline a}$.

As another example, we suppose that $\chi_0 \Yields \chi_1$ is an instance of the schema $x = y \And \phi^v_x \Yields \phi^v_y$. Thus, $\chi_0 \Yields \chi_1$ is equal to $\underline a = \underline b \And \phi^v_{\underline a} \Yields \phi^v_{\underline b}$ for some  $\mathbf \Sigma(L_{\omega \omega})$ formula $\phi$ with a single free variable $v$, and objects $a$ and $b$. Let $V$ be a verification of $\underline a = \underline b \And \phi^v_{\underline a}$. Then $V$ contains $\underline a = \underline b$, so $a$ and $b$ are the same object. It follows that the symbols $\underline a$ and $\underline b$ are equal, so the formulas $\phi^v_{\underline a}$ and $\phi^v_{\underline b}$ are also equal. Thus $V$ is a verification of both $\phi^v_{\underline a}$ and $\phi^v_{\underline b}$.
\end{proof}

\begin{lemma}[base theory]
Let $\chi_0$ and $\chi_1$ be $\mathbf \Sigma(L_{\omega \omega})$ formulas. Assume that $\chi_0 \Yields \chi_1$ is an application of a logical axiom, and that $\chi_0$ is true. Conclude $\chi_1$ is true.
\end{lemma}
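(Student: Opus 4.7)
The plan is to induct on the depth at which the replaced subformula occurs in $\chi_0$. The base case is when the replacement happens at the top level, so that $\chi_0 \Yields \chi_1$ is an instance of the axiom rather than merely an application; the preceding lemma then finishes the argument. The inductive step splits according to the outermost connective of $\chi_0$, and each case is handled by combining the corresponding clause of the earlier proposition on the semantics of $\TT$ with the inductive hypothesis.

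Concretely, if $\chi_0 = \theta_0 \And \eta$ with the replacement taking place inside $\theta_0$, then $\chi_1 = \theta_1 \And \eta$, where $\theta_0 \Yields \theta_1$ is itself an application of the same axiom at strictly smaller depth. From $\TT(\chi_0)$ clause (1) yields $\TT(\theta_0)$ and $\TT(\eta)$; the inductive hypothesis gives $\TT(\theta_1)$, and clause (1) again gives $\TT(\chi_1)$. If the replacement is on the right conjunct, or if the outermost connective is a disjunction, the argument is symmetric, using clause (2) and noting that if the replacement sits on the disjunct that is not witnessed to be true, we do not need the inductive hypothesis at all.

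The quantifier cases are the main place where care is needed. Suppose $\chi_0 = \forall x \in \underline b\: \theta_0(x)$ with the replacement inside $\theta_0$. Then $\chi_1 = \forall x \in \underline b\: \theta_1(x)$. For each $a \in b$, the pair $(\theta_0(\underline a), \theta_1(\underline a))$ is still an application of the same axiom: the replaced subformula $\phi(t_1(x), \ldots, t_n(x))$ becomes $\phi(t_1(\underline a), \ldots, t_n(\underline a))$, and the corresponding replacement in $\theta_1(\underline a)$ is $\psi(t_1(\underline a), \ldots, t_n(\underline a))$. From $\TT(\chi_0)$ clause (4) provides $\TT(\theta_0(\underline a))$ for each $a \in b$; applying the inductive hypothesis pointwise and then clause (4) in reverse yields $\TT(\chi_1)$. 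The unbounded existential case is analogous, using clause (3): extract a witness $\underline a$ with $\TT(\theta_0(\underline a))$, apply the inductive hypothesis to obtain $\TT(\theta_1(\underline a))$, and then reapply clause (3).

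The main obstacle is the syntactic verification used in the quantifier cases, that substituting a closed parameter $\underline a$ for the bound variable of an outer quantifier commutes with the subformula-replacement that defines an application. This is essentially the same substitution-permutation observation that justified rule (0) in the completeness theorem, and it goes through because the terms $t_i$ may contain the bound variable but not any further free variables, so the double substitution reduces to a single substitution with no capture issues.
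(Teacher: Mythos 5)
Your syntactic bookkeeping is fine: the substitution-commutation observation, the base case via the preceding lemma, and the pointwise treatment of the quantifier cases are all correct as mathematics. The problem is the induction itself. The lemma carries the tag ``base theory'', and $\chi_0$, $\chi_1$ are internally quantified sentences, so the depth of the replaced subformula is an arbitrary internal natural number; your induction therefore cannot be pushed to the metalevel, and internally its induction hypothesis has the shape ``for every application $(\theta_0,\theta_1)$ at smaller depth, $\TT(\theta_0) \Implies \TT(\theta_1)$'' --- a universally quantified implication between $\mathbf\Sigma$ statements. The base theory admits only the $\Delta$ induction rule obtained from $\Delta$ separation; $\Sigma$ induction (let alone induction for statements of this complexity) is deliberately unavailable, and the paper leans on precisely this restriction later when reconciling \cref{block: correctness} with G\"odel's second incompleteness theorem. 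So, as written, the key step of your argument is not justified in the theory in which the lemma is asserted.

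The paper avoids any induction over truth-preservation statements: it fixes a verification $V_0$ of $\chi_0$ and transforms it wholesale into a set $V_1$, by adjoining, for each instance of $\phi$ occurring in $V_0$, a verification of the corresponding instance of $\psi$ (supplied by the primitive recursive function read off from the proof of the preceding lemma), and, for each formula of $V_0$ having an instance of $\phi$ as a subformula, the result of applying $\phi \Yields \psi$ to it. One then checks, element by element, that the single constructed set $V_1$ satisfies the closure conditions of a verification and contains $\chi_1$; this is a local check about one set, not a scheme of truth implications. This formulation also matters downstream: the next proposition needs the passage $V_0 \mapsto V_1$ to be primitive recursive so that the validity of a proof from the logical axioms can be established by $\Delta$ induction, and your semantic induction does not produce that datum. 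If you want to keep your structure, recast it as an explicit (primitive recursive) construction of a verification of $\chi_1$ from $V_0$ by recursion along the path to the replaced subformula --- which is essentially the paper's proof.
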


\begin{proof}
Let $\phi \Yields \psi$ be the axiom that we are applying. Examining the proof of the preceeding lemma, we find a primitive recursive function that takes verifications of instances of $\phi$ to instances of $\psi$. Let $V_0$ be a verification of $\chi_0$. Let $V_1$ be the result of including, for each instance of $\phi$ in $V_0$, the verification for the corresponding instance of $\psi$, and adjoining, for each formula having an instance of $\phi$ as a subformula, the result of applying $\phi \Yields \psi$. Examining each element of $V_1$ we find that it is a verification.
\end{proof}

Now we show the validity of positivistic logic.

\begin{proposition}[base theory]
Let $\chi$ and $\chi'$ be $\mathbf \Sigma(L_{\omega \omega})$ formulas. Assume that $\chi \Yields \chi'$ has a positivistic proof using only logical axioms, and that $\chi$ is true. It follows that $\chi'$ is true.
\end{proposition}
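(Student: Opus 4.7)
The plan is to carry out a straightforward induction on the length $n$ of the positivistic proof $(\chi_0, \ldots, \chi_n)$, using the preceding lemma to handle the inductive step. The base case $n = 0$ is immediate, since then $\chi = \chi'$ and the hypothesis gives the conclusion. For the inductive step, I would argue that the truncation $(\chi_0, \ldots, \chi_{n-1})$ is itself a positivistic proof of $\chi \Yields \chi_{n-1}$ using only logical axioms, so the induction hypothesis yields that $\chi_{n-1}$ is true; then the preceding lemma, applied to the final step $(\chi_{n-1}, \chi_n)$, gives that $\chi_n = \chi'$ is true.

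To make this precise inside the base theory, I would phrase the argument constructively rather than as an abstract induction. The proof of the preceding lemma extracts a primitive recursive function that sends a verification of $\chi_i$, together with the axiom application at step $i$, to a verification of $\chi_{i+1}$. Starting from a verification $V_0$ of $\chi$ and iterating this function along the $n$ steps read off from the given proof, one obtains a verification $V_n$ of $\chi'$. Since $\PRS$ has primitive recursion on natural number length as one of its initial constructions, this iterated build-up of verifications is directly available in our base theory.

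The main subtlety — more of a bookkeeping point than a genuine obstacle — is that $\TT$ is a $\Sigma$ predicate rather than $\Delta_0$, so $\Sigma$ induction on the proof length is not available in $\PRS$ and would need to be invoked with care. Phrasing the induction as an explicit primitive recursive construction of a verification sidesteps this issue entirely: the existential content of truth is realized by a concrete object built up step by step, and the whole statement reduces to packaging the preceding lemma into a recursion on the length of the proof.
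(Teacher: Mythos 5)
Your proposal follows essentially the same route as the paper: extract from the preceding lemma the primitive recursive transformation taking a verification of $\chi_i$ (together with the axiom application at that step) to a verification of $\chi_{i+1}$, and iterate it along the given proof to produce a verification of $\chi'$. The one point where you overstate is the claim that the explicit primitive recursive construction ``sidesteps'' the induction issue entirely: primitive recursion gives you the sequence $V_0, V_1, \ldots, V_n$ as an object, but you still need to prove that each $V_i$ is in fact a verification containing $\chi_i$ before you can conclude $\TT(\chi')$, and that is an induction along the proof. The paper handles exactly this step by invoking the $\Delta$ induction rule, which is admissible in the base theory because ``$V_i$ is a verification containing $\chi_i$'' is a bounded property of the constructed object --- so your instinct that full $\Sigma$ induction is not needed is right, but the correct resolution is that the induction is only $\Delta$, not that it disappears.
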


\begin{proof}
Let $V$ be a verification of $\chi$. The procedure taking $V_0$ to $V_1$ in the proof of the preceding lemma is primitive recursive, so there is a function that assigns a verification to each sentence of the proof. We establish that each value of this function is a verification using $\Delta$ induction.
\end{proof}

Each step of a proof using only the logical axioms is uniquely an application of a logical axiom, except for trivial applications of logical axiom (21). However, where that is not the case, we may ask that the axiom being applied and the site of its application be specified at each step, as part of the definition of the proof.

\begin{lemma}[base theory]
Let $\phi(v_1, \ldots v_n)$ be a $\mathbf \Sigma(L_{\omega\omega})$ formula. The following is a theorem of the base theory: ``Let $a_1, \ldots, a_n$ be objects. Assume $\phi(\underline{a_1}, \ldots, \underline{a_n})$ is true. Conclude $\phi(a_1, \ldots, a_n)$.'' The converse is also a theorem.
\end{lemma}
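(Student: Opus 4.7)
The plan is to proceed by external (metamathematical) induction on the structure of the $\mathbf\Sigma(L_{\omega\omega})$ formula $\phi$. Since the statement is a schema asserting a separate theorem of the base theory for each $\phi$, there is no need for an internal induction principle: at each complexity level we obtain a concrete derivation inside the base theory, and the induction takes place outside it. The engine driving every inductive step is the semantic proposition proved just above, which gives us the five clauses (1)--(5) relating $\TT$ to the logical connectives and to the atomic predicates; the pattern is essentially to peel off one connective using the appropriate clause and then invoke the induction hypothesis on the smaller subformula.

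For the base cases $\phi$ is atomic. If $\phi$ is $v_i = v_j$, $v_i \neq v_j$, $v_i \in v_j$, or $v_i \not\in v_j$, then $\phi(\underline{a_1}, \ldots, \underline{a_n})$ is $\underline{a_i} \mathrel{R} \underline{a_j}$ for some $R \in \{=, \neq, \in, \not\in\}$, and clause (5) of the preceding proposition gives $\TT(\underline{a_i} \mathrel{R} \underline{a_j}) \Equivalent a_i \mathrel R a_j$, which is exactly $\phi(a_1, \ldots, a_n)$ in both directions. The logical constants $\Truth$ and $\Falsehood$ are handled directly from the definition of verification.

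For the inductive steps: if $\phi = \psi_1 \And \psi_2$, apply clause (1) to reduce $\TT(\psi_1(\underline{\overline a}) \And \psi_2(\underline{\overline a}))$ to $\TT(\psi_1(\underline{\overline a})) \And \TT(\psi_2(\underline{\overline a}))$, and invoke the induction hypothesis on each conjunct; disjunction is symmetric via clause (2). For $\phi = \exists v\: \psi(v, v_1, \ldots, v_n)$, clause (3) yields $\TT(\exists v\: \psi(v, \underline{\overline a})) \Equivalent \exists b\: \TT(\psi(\underline b, \underline{\overline a}))$, and then the induction hypothesis applied to $\psi$ gives the equivalence with $\exists b\: \psi(b, \overline a)$. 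For $\phi = \forall v \in v_i\: \psi(v, v_1, \ldots, v_n)$, clause (4) with the parameter term $\underline{a_i}$ gives $\TT(\forall v \in \underline{a_i}\: \psi(v, \underline{\overline a})) \Equivalent \forall b \in a_i\: \TT(\psi(\underline b, \underline{\overline a}))$, and the induction hypothesis for $\psi$ finishes the step. Both directions of the biconditional are obtained uniformly in this manner.

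The main place requiring care is the bookkeeping of substitutions in the quantifier cases, since the induction hypothesis must apply to $\psi$ in every instance $\psi(\underline b, \underline{\overline a})$ of its free variables; this is harmless because $\psi$ is a strictly smaller $\mathbf\Sigma(L_{\omega\omega})$ formula whose free variables are among $v, v_1, \ldots, v_n$, so the induction hypothesis for $\psi$ is already a single theorem schema of the base theory covering all parameter choices. I do not expect any genuine obstacle: every connective appearing in a $\mathbf\Sigma$ formula is directly handled by the semantic proposition, and no use of separation, collection, or reflection is needed beyond what is already built into the base theory through the preceding proposition.
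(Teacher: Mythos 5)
Your proposal is correct and matches the paper's own argument, which is exactly an induction on the syntax of $\phi$ applied to the semantic properties of the truth predicate, with the proofs constructed externally (primitive recursively) for each formula. The details you supply for the atomic, connective, and quantifier cases are just the elaboration the paper leaves implicit.
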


Our base theory does not have any axioms that include parameters, so the parameters that appear in our formulas are essentially free variables. Thus, the assumption that $\phi(\underline{a_1}, \ldots, \underline{a_n}$ is true should be expressed with an explicit description of $\phi$, rather than using a parameter symbol. Later, we will supplement the base theory with all true $\mathbf \Sigma$ sentences, on the intuition that since truths may be verified, truths may be assumed, and our present caveat will be moot. We avoid supplementing the base theory now, to keep it countable. (Of course, we may add parameters just for the finite sets, which would make for a more aestheatic theory, but which would also slightly complicat our arguments.)

\begin{proof}[Proof of lemma.]
We apply induction on syntax to the the semantic properties of the Truth predicate. The construction of the proofs of the two theorems are primitive recursive.
\end{proof}

\begin{theorem}[base theory]
Let $\tau$ be a theory extending the base theory, and let $\phi \Yields \psi$ be one of its theorems. The following is a theorem of $\tau$: ``Let $\chi_0$ and $\chi_1$ be $\mathbf \Sigma(L_{\omega \omega})$ sentences. Assume that $\chi_0$ is true, and that $\chi_0 \Yields \chi_1$ is an application of $\phi \Yields \psi$. Conclude that $\chi_1$ is true.''
\end{theorem}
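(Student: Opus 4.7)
My plan is to follow the pattern of the preceding lemma, which established that applications of logical axioms preserve truth by exhibiting verification transformers. The role of the axiom schema is now played by the single theorem $\phi(v_1,\ldots,v_n) \Yields \psi(v_1,\ldots,v_n)$ of $\tau$; the extra ingredient is the lemma relating $\TT(\phi(\underline{a_1}, \ldots, \underline{a_n}))$ to $\phi(a_1, \ldots, a_n)$, which we use to convert between the internal and external points of view.

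The first step is to establish, as a conditional in $\tau$ with $a_1, \ldots, a_n$ free, the implication $\TT(\phi(\underline{a_1}, \ldots, \underline{a_n})) \Yields \TT(\psi(\underline{a_1}, \ldots, \underline{a_n}))$. Indeed, the earlier reflection lemma (applied to the fixed formula $\phi$) converts $\TT(\phi(\underline{a_1}, \ldots, \underline{a_n}))$ into $\phi(a_1, \ldots, a_n)$; the assumption $\tau \vdash \phi \Yields \psi$, together with the substitution logical axiom, gives $\psi(a_1, \ldots, a_n)$; and the converse direction of the same lemma, now applied to $\psi$, yields $\TT(\psi(\underline{a_1}, \ldots, \underline{a_n}))$. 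The second step is to build a verification $V_1$ of $\chi_1$ from a given verification $V_0$ of $\chi_0$, exactly as in the preceding lemma: each sentence of $V_0$ that sits above the replacement site contains some closed instance $\phi(\underline{a_1}, \ldots, \underline{a_n})$, obtained by instantiating the outer bound variables of the context with parameters along the path taken by $V_0$; we replace such sentences by the result of swapping this subformula for $\psi(\underline{a_1}, \ldots, \underline{a_n})$, discard the portions of $V_0$ that verify the internal structure of these $\phi$-instances, and adjoin, for each of them, a verification of $\psi(\underline{a_1}, \ldots, \underline{a_n})$ furnished by the first step. The set $V_1$ thus obtained contains $\chi_1$ and satisfies the five closure conditions defining a verification.

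The main obstacle will be the bookkeeping needed for deep inference. A single occurrence of $\phi(t_1, \ldots, t_n)$ under outer quantifiers of $\chi_0$ produces arbitrarily many closed instances $\phi(\underline{a_1}, \ldots, \underline{a_n})$ inside $V_0$, and we need the $\Delta_0$ collection schema of the base theory to assemble the associated verifications of $\psi(\underline{a_1}, \ldots, \underline{a_n})$ into a single set before defining $V_1$. We also need to handle cleanly the boundary case in which the replacement site is the whole of $\chi_0$, where the construction degenerates and $\chi_1$ is verified directly by the first step; and to record the syntactic detail that a single variable occurrence in $\phi$ may be substituted by a term $t_i$ containing several outer bound variables, so that the pairing of instances in $V_0$ with verifications of instances of $\psi$ is genuinely parameterized by tuples of witnesses rather than by a single one.
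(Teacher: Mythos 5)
Your proposal is correct and follows essentially the same route as the paper: first the instance-level argument combining the truth-schema lemma with the theorem $\phi \Yields \psi$ to pass from $\TT(\phi(\underline{a_1},\ldots,\underline{a_n}))$ to $\TT(\psi(\underline{a_1},\ldots,\underline{a_n}))$, then the verification surgery for deep applications, with a collection principle of the base theory assembling the verifications of the $\psi$-instances (precisely because, unlike the logical-axiom case, no primitive recursive transformer is available). Two small points: the schema directly invoked should be $\Sigma$ collection (which the base theory proves) rather than $\Delta_0$ collection, since ``there is a verification of $\psi(\underline{a_1},\ldots,\underline{a_n})$'' is a $\Sigma$ statement, and it is safer to adjoin the modified sentences and the new verifications to $V_0$ rather than discard parts of $V_0$, since a discarded sentence might also witness a closure condition for a retained one.
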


\begin{proof}
We argue in $\tau$: ``Let $\chi_0$ and $\chi_1$ be $\mathbf \Sigma(L_{\omega \omega})$ sentences. Assume that $\chi_0$ is true, and that $\chi_0 \Yields \chi_1$ is an instance of $\phi \Yields \psi$. By definition of instance, there are objects $a_1, \ldots, a_n$ such that $\chi_0$ is $\phi(\underline a_1, \ldots , \underline a_n)$, and $\chi_1$ is $\psi(\underline a_1, \ldots , \underline a_n)$. Since $\phi(\underline a_1, \ldots , \underline a_n)$ is true, $\phi(a_1, \ldots, a_n)$, so $\psi(a_1, \ldots, a_n)$. We conclude that $\psi(a_1, \ldots, a_n)$ is true.'' This argument includes the given proof of $\phi \Yields \psi$, and it includes the truth schema established in the preceding lemma.

We argue again in $\tau$: ``Let $\chi_0$ and $\chi_1$ be $\mathbf \Sigma(L_{\omega \omega})$ sentences. Assume that $\chi_0$ is a true, and that $\chi_0 \Yields \chi_1$ is an application of $\phi \Yields \psi$. Let $V_0$ be a witness of $\chi_0$. Let $V_1$ be the result of including, for each instance of $\phi$ in $V_0$, a verification of the corresponding instance of $\psi$, and adjoining, for each formula having an instance of $\phi$ as a subformula, the result of applying $\phi \Yields \psi$. We may check that $V_1$ is a verification by examining its elements. We conclude that $\chi_1$ is true.'' We insert our previous argument in $\tau$ when we we include, for each instance of $\phi$ in $V_0$, a verification of the corresponding instance of $\psi$. That argument does not give us a primitive recursive procedure for obtaining verifications of instances of $\psi$. This argument obtains the verification $V_1$ by applying $\Sigma$ collection.
\end{proof}

The second part of the proof demonstrates that the correctness of a theory $\tau_1$ is equivalent to ``Let $\chi_0 \Yields \chi_1$ be a substitution instance of an axiom of $\tau_1$. Assume $\chi_0$ is true. Conclude $\chi_1$ is true.''

\begin{corollary}[base theory]\label{block: correctness}
Let $\tau$ be a $\mathbf \Sigma(L_{\omega \omega})$ theory extending the base theory, and let $\tilde \tau$ be $\tau$ together with the conditional expressing the correctness of $\tau$. The theory $\tilde \tau$ proves the correctness of $\tilde \tau$.
\end{corollary}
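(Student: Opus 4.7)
I would reason within $\tilde\tau$ and exploit the fact that its axiom predicate $\tilde\tau(\alpha)$ is, by construction, the disjunction $\tau(\alpha) \Or \alpha = \quote{\text{Correct}(\tau)}$, so that the correctness of $\tilde\tau$ reduces by disjunction elimination to two cases. Unfolding definitions, the correctness of $\tilde\tau$ is a single $\mathbf\Sigma(L_{\omega \omega})$ conditional stating that if $\chi_0$ is true and $(\chi_0,\chi_1)$ arises as an application of some axiom $\alpha$ of $\tilde\tau$, then $\chi_1$ is true. Assuming the antecedent and extracting a witnessing $\alpha$, we split on the two disjuncts.

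In the first case, $\tau(\alpha)$ holds, and the assumption instantiates the antecedent of $\text{Correct}(\tau)$; since $\text{Correct}(\tau)$ is itself an axiom of $\tilde\tau$, applying it immediately yields $\TT(\chi_1)$. In the second case, $\alpha$ is $\text{Correct}(\tau)$ itself; here I would invoke the preceding theorem with $\tilde\tau$ in the role of $\tau$ and with $\text{Correct}(\tau)$ in the role of $\phi \Yields \psi$. Since $\text{Correct}(\tau)$ is an axiom, hence a theorem, of $\tilde\tau$, the hypothesis of that theorem is met, and its conclusion, internal to $\tilde\tau$, tells us that any application of $\text{Correct}(\tau)$ preserves truth. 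This yields $\TT(\chi_1)$ in the second case, and disjunction elimination combines the two into the correctness of $\tilde\tau$.

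The main delicacy is the application of the preceding theorem to $\text{Correct}(\tau)$ itself, whose own premise already invokes the truth predicate and an existential quantifier over axioms. Inside $\tilde\tau$, the construction of verifications of instances of $\psi$ from verifications of instances of $\phi$, via $\Sigma$ collection as in the second half of the preceding proof, must go through for this particular self-referential choice of $\phi \Yields \psi$. Because $\tilde\tau$ adjoins only one new axiom to $\tau$, no induction over the class of axioms of $\tau$ is needed; the entire strengthening from $\tau$-correctness to $\tilde\tau$-correctness happens by a single additional case in the disjunction elimination.
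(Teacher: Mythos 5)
Your proposal is correct and follows essentially the same route as the paper: it handles the axioms of $\tau$ by directly invoking the new correctness axiom, and handles the new axiom itself by applying the preceding theorem to $\tilde\tau$ with $\text{Correct}(\tau)$ as a (trivial) theorem of $\tilde\tau$. The extra detail you supply (disjunction on the axiom predicate and the remark about $\Sigma$ collection) just makes explicit what the paper's two-sentence proof leaves implicit.
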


\begin{proof}
The theory $\tilde \tau$ proves the correctness of $ \tau$ by definition. The theory $\tilde \tau$ proves the correctness of the new axiom by the above theorem, because the new axiom is trivially one of its theorems.
\end{proof}

The above corollary is at the same time reassuring and alarming. It is reassuring that we can prove the correctness of $\tilde \tau$ without additional assumptions, but what of G\"odel's second incompleteness theorem? G\"odel's second incompleteness theorem is most certainly provable in our base theory. The kernel of its proof is so elementary, that it applies to any system that allows even the most basic finitary procedures. 

The usual proofs of G\"odel's first and second incompleteness theorems need to be modified only slightly to fit the positivistic framework; the G\"odel sentence, which is nonpositivistic, must be replaced by its logical complement. The heart of the matter is that diagonalization is only possible for positivistic predicates. Diagonalization yields a sentence $\sigma$ that is logically equivalent to $\PP(\sigma \Yields \Falsehood)$.

There is a proof from $\sigma$ to $\PP(\Truth \Yields \Falsehood)$: ``Assume $\sigma$. It follows that $\PP(\Truth \Yields \sigma)$, because $\sigma$ is a positivistic arithmetical sentence, and that $\PP(\sigma \Yields \Falsehood)$, by construction of $\sigma$. Conclude that $\PP(\Truth \Yields \Falsehood)$.'' Assume consistency, i. e., $\PP(\PP(\Truth \Yields \Falsehood) \Yields \Falsehood)$. We have just shown $\PP(\sigma \Yields \PP (\Truth \Yields \Falsehood))$. It follows that $\PP(\sigma \Yields \Falsehood)$, i. e, that $\sigma$. Since $\sigma$ is a positivistic arithmetical sentence, we infer $\PP(\Truth \Yields \sigma)$, so we may conclude $\PP(\Truth \Yields \Falsehood)$ as desired.

The resolution of this apparent contradiction is that we have shown the correctness of $\tilde \tau$, and not its validity. Thus, we do not have a proof of consistency. We cannot apply our argument for the validity of the theory of positivistic logic, because there is no primitive recursive function producing witnesses. We cannot appeal to $\Sigma$ induction because it isn't among our axioms; it \emph{cannot} be among our axioms because of its high logical complexity. It is generally not admissible.

Thus, $\tilde \tau$ cannot prove that every proof is valid; it can only prove that every step is correct. This, however essentially guarantees that every proof that we write down is valid: for every natural $n$, we have a simple proof of the validity of proofs of fewer than $n$ steps, linearly in $n$. This is a very natural variant of Pudl\'ak's result \cite{Pudlak} that proofs of finitary consistency exist with size linear in the size of the proofs considered.

In the finitary setting we may take $\tau$ to consist of the \emph{theorems} of $\PRA$. Thus, $\tilde \tau$ is obtained by adding the \emph{validity} of $\PRA$. The theory $\tilde \tau$ is equivalent to $\PRA$ together with the validity of $\PRA$. The theory $\PRA$ proves Parson's theorem, and it proves the reduction of $\WKL$ to $\mathsf{ I \Sigma_1}$; so, the theory $\tilde \tau$ proves its own correctness and the validity of $\WKL$. When I adopt the mindset of a finitist, I find the theory $\tilde \tau$ just described to be quite acceptable, though it does go a hair further than $\PRA$ itself, which, after Tait's analysis, is often identified with finitism.

\section{the universe of pure sets}\label{section: universe}

We have so far considered model universes within a cumulative hierarchy described by the axioms of $\ZFC$. We now consider the genuine universe of pure sets: the totality of all potential pure sets. This universe of all pure sets may be like one of the models we have considered; it may consist of hereditarily finite sets, or of hereditarily countable sets, or it may permit the powerset construction. Since the axioms of $\ZFC$ are not all $\Pi_2$, we might suppose that our investigation of these model universes occurred within some transitive model of $\ZFC$.

Our speculation about the nature of the full universe is guided by two principles. First, the truth of any predicate should be determined by examining membership following some logical scheme. Thus, the totality of procedures of the full universe should be essentially exhausted by $\mathbf \Sigma$ formulas, with no additional nonlogical symbols. Each $\mathbf \Sigma(L_{\infty \omega})$ formula is equivalent to a $\mathbf \Sigma(L_{\omega \omega})$ formula, so our notion of procedure does not depend on this choice of logic. 

Our second principle is that the universe should be complete with respect to the sets that may exist. The simplest interpretation of this principle is the completeness of some $L_{\infty\omega}$ system, e. g., $\lL\kK_{\infty \omega}$. A special case of this principle is more deeply compelling: we ask that every consistent description of a unary predicate on a transitive set is satisfied by some subset of that transitive set. This is analogous to constructibility, except instead of exhibiting the predicate that defines the new set, we describe it indirectly.

\begin{definition}[base theory] The \underline{set completeness principle}:
Let $A$ be a transitive set. Let $S$ be a unary predicate symbol. Let $T$ be a $\mathbf K (L_{\infty \omega}(= , \in, S))$ theory with parameters from $A$, that includes the following axioms.
\begin{enumerate}
\item The equality axioms:
\begin{enumerate}
\item $\forall x\: x = x$
\item $\forall x\: \forall y\: x = y \Iff y = x$
\item $\forall x\: \forall y\: \forall z\: x = y \And y =z \Implies x =z$
\item $\forall x\: \forall y\: x = y \And S(x) \Implies S(y)$
\end{enumerate}
\item The atomic axioms:
\begin{enumerate}
\item $\underline a \in \underline b$, for $a \in b$
\item $ \neg \underline a \in \underline b$, for $a \not \in b$
\item $\underline a  = \underline b$, for $a = b$
\item $\neg \underline a = \underline b$, for $a \neq b$
\end{enumerate}
\item The content axiom: $\forall x \: \OR_{a \in A} x = \underline a$
\end{enumerate}
Conclude that either $T$ is inconsistent for $\lL\kK_{\infty \omega}$ (figure 4), or there is a subset $B \subsetof A$ such that $(A, =, \in, B)$ is a model of $T$.
\end{definition}

For an arbitrary structure: the equality axioms are the substitution axioms for atomic formulas, together with the equivalence relation axioms for equality; the atomic axioms are the true atomic sentences, and the true negated atomic sentences; and, the content axiom is the universally closed disjunction expressing that every element is named by some closed term. The inconsistency of $T$ for $\lL\kK_{\infty \omega}$ means the derivability of $\AND T \Proves \emptyset$ in $\lL\kK_{\infty \omega}$.

\begin{definition}[base theory] The \underline{model completeness principle}:
Let $T$ be a $\mathrm K(L_{\infty \omega}(\S))$ a theory whose vocabulary $\S$ consists of finitary function and predicate symbols. Conclude that either $T$ is inconsistent for $\lL\kK_{\infty \omega}$, or $T$ has a model.
\end{definition}

The model completeness principle is not as well motivated by the notion of a full universe of sets, but it may be motivated by the notion of a full universe of structures. The very presence of infinite sets in mathematical discourse is arguably to ensure that any consistent finitary theory has model. Notably, the model completeness principle determines the cardinality structure of the universe: infinite sets exist, and every set is countable.

Even the weaker set completeness principle implies that every set is countable. Unlike the model completeness principle, the set completeness principle holds in the model universe $H_\omega$ of hereditarily finite sets. In the presence of infinite sets, the set completeness principle produces generic sets. Let $P$ be a forcing partial order, let $A$ be its transitive closure, and let $C$ be a nonempty collection of dense subsets of $P$. Forcing semantics shows that the following axioms are consistent with the previously given axioms:
\begin{enumerate}\setcounter{enumi}{3}
\item $\neg S(\underline a)$, for $a\not \in P$
\item $S(\underline p) \Implies S(\underline q)$, for $p, q \in P$ such that $p \leq q$
\item $S(\underline p) \And S(\underline q) \Implies \OR_{r \leq p, q} S(\underline r)$, for $p, q \in P$
\item $ \OR_{p \in D} S(\underline p)$, for $D$ in $C$
\end{enumerate}
In the usual way, we obtain a surjection from $\omega$ onto any given set.

\begin{theorem}[base theory]\label{block: equivalence}
The model completeness principle is equivalent to the set completeness principle with the axiom of infinity.
\end{theorem}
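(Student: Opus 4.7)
The plan is to establish both implications separately. The forward direction (model completeness implies set completeness with infinity) is essentially a direct application, while the backward direction (set completeness with infinity implies model completeness) carries the substantive content.

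For the forward direction, the axiom of infinity is obtained by applying the model completeness principle to the theory over the empty signature whose axioms are $\exists x_1 \cdots \exists x_n\, \AND_{i<j} \neg(x_i = x_j)$ for each natural number $n$; this theory is $\lL\kK_{\infty\omega}$-consistent (any finite subset has a finite model), so MCP produces a model, which is necessarily infinite. To obtain set completeness, let $A$ be transitive and let $T$ be a theory in $(=, \in, S)$ satisfying the hypotheses. Apply MCP to get a model $N = (|N|, =^N, \in^N, S^N)$. The equality axioms force $=^N$ to be a congruence, so after quotienting we may assume $=^N$ is genuine equality. The atomic axioms then force $\iota: a \mapsto \underline a^N$ to preserve and reflect both $=$ and $\in$, hence be injective, and the content axiom forces $\iota$ to be surjective. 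Thus $\iota$ is an isomorphism of $(A, =, \in, \iota\inv(S^N))$ with $N$, witnessing SCP with $B := \iota\inv(S^N)$.

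For the backward direction, given a $\lL\kK_{\infty\omega}$-consistent theory $T$ in $\mathrm K(L_{\infty\omega}(\S))$, we encode the question of the existence of a model as an instance of SCP. Using the axiom of infinity and a downward reflection, fix an infinite set $U$ with $|U| \geq |\S| + \aleph_0$ to serve as the universe. Let $A$ be a transitive set containing $U$ along with \emph{fact codes}: distinguished elements $(f, \vec u, u')$ for each function symbol $f$ and $(P, \vec u)$ for each predicate symbol, with arguments in $U$. Let $F \subsetof A$ denote the set of fact codes. Define $T^*$ in $\mathrm K(L_{\infty\omega}(=, \in, S))$ over $A$ with axioms: (i) the equality, atomic, and content axioms required by SCP; (ii) coherence axioms asserting $S \subsetof F$ and that $S$ determines a total single-valued graph on $U$ for each function symbol (expressible as infinitary conjunctions indexed by $U^n$); and (iii) for each axiom $\phi$ of $T$, a translation $\phi^*$ relativizing quantifiers to $U$ and replacing $f(\vec t) = t'$ by $\underline{(f, \vec t, t')} \in S$ and $P(\vec t)$ by $\underline{(P, \vec t)} \in S$.

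The crux is to show that $T^*$ is itself $\lL\kK_{\infty\omega}$-consistent. Define a translation $\dagger$ from $T^*$-sentences to $\S$-sentences (with auxiliary set-theoretic side conditions in the base theory): $\underline{(f, \vec t, t')} \in S$ becomes $f(\vec t) = t'$, $\underline{(P, \vec t)} \in S$ becomes $P(\vec t)$, and atomic statements about $=, \in$ among parameters of $A$ translate to their absolute truth values in the base theory. Under $\dagger$, axioms of types (i) and (ii) become tautologies of the base theory, while axioms of type (iii) roundtrip back to axioms of $T$. A routine induction on derivation height, using the cut-elimination for $\lL\kK_{\infty\omega}$ established earlier in the paper to commute $\dagger$ with the infinitary rules, converts any hypothetical proof of $\emptyset$ from $T^*$ into a proof of $\emptyset$ from $T$, contradicting the assumed consistency of $T$. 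Once $T^*$ is known consistent, SCP produces $B \subsetof A$ with $(A, =, \in, B) \models T^*$, and the $\S$-structure $M_B$ on $U$ defined by $f^{M_B}(\vec u) = u'$ iff $(f, \vec u, u') \in B$ and $P^{M_B}(\vec u)$ iff $(P, \vec u) \in B$ is well-defined by axioms (ii) and satisfies $T$ by axioms (iii).

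The main obstacle is the proof translation underlying the consistency transfer. Verifying that $\dagger$ commutes with the infinitary rules of $\lL\kK_{\infty\omega}$, together with the bookkeeping of parameters and the Löwenheim-Skolem reduction needed to justify restricting to models on a universe of bounded size $U$, is the delicate technical core, and is where the cut-elimination results developed earlier in the paper are needed in essential form.
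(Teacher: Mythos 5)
Your overall architecture matches the paper's (reduce a general $\mathrm K(L_{\infty\omega}(\S))$ theory to an instance of the set completeness principle over a transitive set, then transfer inconsistency back), but the step you label ``routine induction on derivation height'' is precisely the substantive content of the theorem, and as sketched it does not go through. The difficulty, which the paper flags explicitly, is that one cannot argue semantically (no model of $T$ is yet available, and Barwise completeness is unavailable), so the transfer of a hypothetical derivation of $\AND T^* \Proves \emptyset$ into one of $\AND T \Proves \emptyset$ must be done syntactically; and a formula-by-formula translation $\dagger$ does not commute with the rules for the axioms your $T^*$ is forced to contain. Concretely: the content axiom $\forall x\: \OR_{a \in A} x = \underline a$ cannot be sent to a ``tautology'' of the $\S$-language --- inside a derivation it is consumed by the left universal and left disjunction rules, producing instances $\OR_a t = \underline a$ that interleave with the rest of the proof, so it has to be \emph{eliminated} from a cut-free derivation by a separate induction (this is a distinct lemma in the paper's proof, using the availability of unused parameters). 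Likewise the atomic and equality axioms over $(A,=,\in)$ mix with the $S$-atoms inside cuts; the paper handles this with Craig interpolation (to split off the pure $\in$-part) and with a strengthened cut-elimination that admits true inequalities between closed terms, and equality chains, as initial sequents. Finally, your clause ``replace $P(\vec t)$ by $\underline{(P,\vec t)} \in S$'' is only meaningful for closed $\vec t$; for formulas with free variables you need either tuple-forming function symbols or an existential $\in$-formula expressing tuple formation (the paper does the latter, and then has to strip the tuple terms back out at the end, which is another nontrivial step of its argument). None of these obstructions is addressed by your sketch, so the consistency transfer is a genuine gap rather than a routine verification.

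A smaller point on the forward direction: justifying the $\lL\kK_{\infty\omega}$-consistency of your ``at least $n$ elements for every $n$'' theory by noting that every finite subset has a finite model is not valid in this setting --- consistency here means non-derivability of $\AND T \Proves \emptyset$, the conjunction ranges over the whole infinite theory, compactness fails for $L_{\infty\omega}$, and a semantic argument presupposes soundness plus the existence of models, which is unavailable in a base theory that may have no infinite sets. The paper instead uses a theory whose consistency the base theory proves outright (Robinson arithmetic $\mathsf{Q}$) and which has no finite models; you would need an analogous proof-theoretic consistency argument for your scheme.
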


The proof is unexpectedly long for such an expected result. The main difficulty is that the derivability of a sequent is \textit{a priori} not decidable. Thus, we cannot form sets of derivable sequents. We cannot appeal to Barwise completeness, because the universe is not assumed to be a transitive set in a model of $\ZFC$.

\begin{proof}
The forward direction is simpler. The model completeness principle implies the set completeness principle because the equality axioms, the content axiom, and the atomic axioms ensure that any model is canonically isomorphic to an expansion of $(A, = , \in)$. The model completeness principle implies the axiom of infinity because our base theory is more than sufficient to show that Robinson arithmetic $\mathsf{Q}$ is consistent, and this theory cannot have a finite model as the successor function is an injection that is not a surjection.

The backward direction is more involved. We fix a set $T$ of $\mathrm K(L_{\infty \omega}(\S))$ sentences, for some set $\S$ of finitary function and predicate symbols; we will show that $T$ is inconsistent or it has a model. Without loss of generality we may assume that $\S$ consists entirely of predicate symbols, by replacing function symbols by predicate symbols, as usual. Thus, we assume that $\S = \{P_0, P_1, \ldots \}$ is a countable set of predicate symbols. Let $A$ be the set of hereditarily finite sets. We can interpret $T$ as a set of $\mathbf K (L_{\infty \omega}(= , \in, S))$ sentences, with parameters from $A$, by interpreting each atomic formula $P_n(x_1, \ldots, x_m)$ as the formula ``there is an object that is an $(m+1)$-tuple $(n, x_1, \ldots, x_m)$ that satisfies $S$''. 

We define $T_1$ to be the $\mathbf K(L_{\infty \omega}(=, \in, S))$ theory with parameters from $A$, obtained by adding the equality axioms, the content axiom, and the atomic axioms to these interpreted axioms of $T$. If the vocabulary $\S$ includes a binary equality relation symbol, we treat this relation symbol as distinct from the equality symbol in $T_1$; thus it is an equivalence relation that may fail to satisfy substitution for some $\mathbf K (L_{\omega \omega}(= , \in, S))$ formulas. We code tuples as partial functions whose domain is a finite ordinal; we exclude $0$-tuples. Note that every element of $A$ that is a tuple is uniquely such, that no tuple is a natural number, and that $A$ has infinitely many elements that are neither tuples nor natural numbers. By the set completeness principle, the theory $T_1$ either is inconsistent or is modelled by some subset of $A$. By construction of $T_1$, any such model may be interpreted as a model of $T$; since any equality symbol in the vocabulary of $T$ is treated as a nonlogical symbol, we expect $T$ to have an infinite model; since the set completeness principle implies that every set is countable, we can expect $T$ to have a countable model. It remains to argue that if $T_1$ is inconsistent, then $T$ is inconsistent. We assume that $T_1$ is inconsistent, and we proceed by repeatedly modifying $T_1$ in minor way, and checking that the result is still be inconsistent.

We define $T_2$ by enlarging the vocabulary of $T_1$ to include an $n$-ary $n$-tuple formation function symbol for each positive integer $n$, and then interpreting the axioms of $T$ as we have done in the definition of $T_1$, but using these new function symbols to translate the formulas of $T$; we include atomic axioms and substitution axioms with these new function symbols. Because we are working with the infinitary deductive system $\lL\kK_{\infty \omega}$, the atomic axioms, together with the content axiom, are enough to prove every true $\mathbf K(L_{\infty \omega}( = , \in, S, \<\cdot\>, \< \cdot, \cdot\> ,\< \cdot, \cdot, \cdot\>, \ldots))$ sentence, and in particular that the new function symbols do form tuples. Thus $T_2$ derives every axiom of $T_1$, so $T_2$ is inconsistent.

We define $T_3$ by excluding all parameters that name tuples in $A$, and removing the axioms where they occur from $T_2$, rewriting the content axiom to express that every object is equal to some closed term. This step may be achieved simply by replacing each parameter that names a tuple in $A$ by the closed term expressing that tuple in terms of parameters that do not name tuples in $A$. This second presentation shows that we may obtain a deduction of the inconsistency of $T_3$, by taking a deduction of the inconsistency of $T_2$, and performing such a replacement throughout. We note that every element of $A$ is the value of a unique term whose parameter symbols do not name tuples in $A$.

We define $T_4$ by excluding the membership relation symbol $\in$, and removing the atomic and substitution axioms for $\in$ from $T_3$. We obtain a deduction witnessing the inconsistency of $T_4$ by applying the interpolation theorem, with sentences containing $\in$ on the left, and sentences containing $S$ on the right. The usual proof of Craig's interpolation theorem from the cut-elimination theorem applies. Recall that we rewrite all the sentences with only atomic formulas negated, and then we obtain a cut-free derivation where negation is applied immediately, by pushing applications of the negation rules in a cut-free derivation towards initial sequents. We then obtain interpolants recursively over the proof tree; negation is only applied in the familiar case where we have sequents of literals, and after negation is applied, interpolants are simply combined in the obvious way. The interpolant we obtain is a sentence that contains neither $\in$ nor $S$, i. e., whose only only predicate symbol is equality (as well as $\Truth$ and $\Falsehood$). The sentences appearing on the left are true sentences about the structure $(A, =, \in)$, so the interpolant is also true about this structure. Thus, the interpolant may be deduced using just the atomic axioms and substitution axioms for $=$, and therefore, from $T_4$. Bringing the sentences on the right of our sequent back to the left, we conclude that $T_4$ is inconsistent.

We will apply the cut-elimination theorem to the deduction of a contradiction in $T_4$. There is a standard variant of cut-elimination that makes accommodations for equality; it implements the equality axioms as initial sequents. This is the approach taken by Feferman in his syntactic proof of infinitary cut-elimination \cite{Feferman}. The deductive system without equality is thus extended by two rules: the first deduces a sequent of the form $\emptyset \Proves t = t$ from nothing; the second deduces sequents such as $s_1 = s_2, t_1 = t_2, t_2 = t_3,  P(s_1, t_1) \Proves P(s_2, t_3)$ from nothing, with the set of equalities in the antecedent called an equality chain. I believe that Feferman intended to include the the reverse inequalities $s_{i+1} = s_i$ in his definition of an equality chain, as there is otherwise no way to derive the sequent $x = y \Proves y =x$ without using cut. We will apply a stronger variant of cut-elimination that implements the true inequalities between closed terms as initial sequents. Specifically, for any pair of distinct closed terms $s()$ and $s'()$, we allow
$$ s()= s'() \Proves \emptyset$$
as an initial sequent. To eliminate the cut rule, we also allow initial sequents whose antecedents consist of equality chains connecting distinct closed term $s()$ and $s'()$, and whose consequent are empty. These new additions require only a minor addition to Feferman's proof, of the same kind as is made for the other initial sequents.

Thus, we have a deduction in this extended system of the sequent
$$\forall x\: \OR_{s()} x = s(), \AND \mathrm{Atomic}, \AND \mathrm{Translations} \Proves \emptyset$$
where the disjunction is taken over all closed terms $s()$, and $\mathrm{Atomic}$ is the set of all atomic axioms, and $\mathrm{Translations}$ is the set of translations of the axioms of $T$. Since the parameters naming atomic tuples are not in $T_4$, the atomic axioms are equalities between equal closed terms, and inequalities between distinct closed terms. Both kinds of sentences follow from our initial sequents, so by the cut rule, we have a deduction of 
$$\forall x\: \OR_{s()} x = s(), \AND \mathrm{Translations} \Proves \emptyset.$$
Applying the cut-elimination theorem to this derivation, we obtain a deduction that is cut-free. We may assume that any disjunction $\OR x = s()$ becomes universally closed immediately after it forms on the left.

We now show by induction over the fixed cut-free derivation that each sequent in the derivation has a derivation even when the content axiom $\forall x\: \OR x = s()$ is excluded from its antecedent. The only nontrivial steps are where the content axiom is formed by the left disjunction and left universal quantification rules; there is nothing to do if it is formed by the left weakening rule. Assume that the content axiom is just formed by successive applications of the left disjunction rule and the left universal quantification rule, producing some sequent $\forall x \: \OR x = s(), \Gamma \Proves \Delta$ in the fixed cut-free derivation. Write $\Gamma_0$ for $\Gamma$ with all instances of the content axiom removed. By the induction hypothesis, the sequent
$$t(x_1, \ldots, x_n) = s(), \Gamma_0 \Proves \Delta$$
has a derivation for some term $t(x_1, \ldots, x_n)$ and all closed terms $s()$. The formulas in $\Gamma_0$ and $\Delta$ are equalities and substitution instances of subformulas of translations of axioms of $T$. Since $\Gamma_0$ and $\Delta$ are finite, and infinitely many parameters are excluded from translations of axioms of $T$, there is a parameter $a$ that does not appear in any formula in $\Gamma_0$ or $\Delta$. Thus, we have derivations of the following sequents:
$$t(x_1, \ldots, x_n) = a, \Gamma_0 \Proves \Delta$$
$$\exists y\: t(x_1, \ldots, x_n) =y, \Gamma_0 \Proves \Delta$$
$$\Gamma_0 \Proves \Delta$$
By induction, we obtain a derivation of $\AND \mathrm{Translations} \Proves \emptyset$, and therefore a cut-free derivation of this sequent. Since equality does not occur in any translation of an axiom of $T$, the subformula property implies that this sequent may be derived in the equality-free system. 

We define $T_5$ to be the theory whose axioms are translations of the axioms of $T$; we have shown that its inconsistency sequent $\AND \mathrm{Translations} \Proves \emptyset$ has a cut-free derivation. Every atomic subformula in the translation of an axiom of $T$ is of the form $S((\underline n, x_1, \ldots, x_m))$ for some integers $n$ and $m$, so every atomic subformula of any formula in any sequent in the cut-free derivation is a substitution instance of $S((\underline n, x_1, \ldots, x_m))$. We now follow the intuition that we may treat any function symbols appearing below a top-most function symbol as  distinct from each top-most function symbol, and that these distinct new function symbols may name constant functions; so, we may replace any nonvariable term below a top-most function symbol by the same new constant symbol. Formally, we simply replace every nonvariable term appearing as the argument of a top-most function symbol after the first position with the same new variable, and check that the cut-free deduction is still a cut-free deduction.

We now have a cut-free deduction of inconsistency in $T_5$ with the property that every atomic subformula is of the form $S((\underline n, v_1, \ldots v_m))$ for some natural number $n$ and variables $v_1, \ldots, v_m$. We may simply replace each such formula by $P_n(v_1, \ldots, v_n)$ to obtain a proof of incoconsistency in $T$.
\end{proof}

The principle that every set is countable is not as restrictive as it appears. Under $\ZFC$, every recursively enumerable $\mathrm K(L_{\omega \omega}(= , \in))$ theory that has a transitive model, has a countable transitive model. Thus, under various standard large cardinal assumptions, there are countable transitive models of $\ZFC$ satisfying various standard large cardinal assumptions. So, our base theory with the model completeness principle is consistent with the existence of such transitive models.

The same sort of argument may be made for the axiom of constructibility. If a recursively enumerable $\mathrm K(L_{\omega \omega}(= , \in))$ theory has a transitive model, it has a countable transitive model, so by the Shoenfield absoluteness theorem, it has a countable transitive model in $L$. The set of hereditarily countable sets in $L$ models our base theory, and the model completeness principle, and the axiom of constructibility. I do not view the axiom of constructibility to be as natural an axiom for the full universe of pure sets as the completeness principles discussed, but it does preserve some features of the standard picture of the universe. The axiom of constructibility stratifies the universe of sets into a cumulative hierarchy, and I suppose that it expresses a form of transfinite predicativism. In this setting, it is natural to consider ``large ordinal'' axioms of the form ``$L_{\alpha}$ is a model of $\ZFC + \cdots$''.

The reasoning here echos Hamkins's arguments for principles expressing an extensible concept of set, including that every universe should be a transitive model in a universe satisfying the axiom of constructibility. My impression is that Hamkins advocates for a multiverse view of set theory, whereas these notes advocate for a universe view. However, a multiverse view in which well-foundedness is absolute may be reconciled with the universe view expressed here by saying that the former focuses on the many transitive models of $\mathsf{ZF}$, whereas the latter focuses on the set-theoretic universe as a whole.

\section{assertibility}\label{section: assertibility}

The $L_{\omega \omega}$ completeness theorem implies that any theorem, that can be derived from the axioms of a positivistic theory $T$ via classical logic, also has a positivistic proof from that theory. Furthermore, the syntactic proof of the cut-elimination theorem yields a primitive recursive procedure that transforms any given classical derivation of a conditional into a positivistic proof. However, as is well known, the elimination of cuts can produce a superexponential growth in the length of a proof.

The cut-elimination theorem for $\lL\kK_{\omega \omega}$ can be proved in $\PRA$, so it is certainly a theorem of our base theory. Thus, we may first prove the existence of a derivation in classical logic by exhibiting such a derivation, and then infer the existence of a positivistic proof. However, the existence of a proof does not establish the proved conditional without an additionally accepted validity principle. For a theory that proves its own correctness, the acceptance of such a validity principle amounts to an instance of the $\Sigma_1$-induction rule on $\omega$. Thus, a minute extension of the base theory shows that the theorems proved from the axioms of the base theory using classical logic are valid. The same observation holds for the nominally weaker intuitionistic logic, of course. However, intuitionistic derivability is the more interesting notion because it is constructive.

We introduce a new notion of assertibility as a form of semantics for intuitionistic formulas. It is inspired by the notion of assertibility in Weaver's \textit{Truth and Assertibility}, but there are two significant differences. First, this notion of assertibility is defined relative to a theory $\tau$. Second, this notion of assertibility does not satisfy the full capture schema $\phi \Yields \AA(\phi)$, though it does satisfy the other axioms. Both of these differences may be rationalized conceptually. The positivistic approach recognizes only $\mathbf \Sigma$ formulas as meaningful; it does not recognize any conditionals as absolutely justifiable, only justifiable relative to given axioms. I have no intuition that there is any complete objective notion of ``conclusive demonstration'' \cite[p. 89]{Weaver}. Weaver justifies the capture law, that $A$ implies the assertibility of $A$, on the principle that ``all truths can be known'' \cite[p. 109]{Weaver}. We do not depart from this principle; we simply have a stricter conception of what sentences may be truths. Indeed, for any $\mathbf \Sigma$ sentence $\phi$, we will have that $\phi$ implies the assertibility of $\phi$.

\begin{definition}[base theory + every set is countable]\label{block: assertibility definition}
Let $\tau$ be an intentional $ \mathbf \Sigma (L_{\omega \omega})$ theory extending the base theory. For a sentence $\phi \in \mathbf I(L_{\infty \omega})$ we say that $\phi$ is \underline{assertible} from $\tau$, and write $\AA_\tau(\phi)$, in case there is a set $A$ of axioms of $\tau$ (``axioms''), and a set of true $\mathbf \Sigma(L_{\infty \omega})$ sentences $E$ (``observations'') such that $ \AND \overline A, \AND E \proves \phi$ is derivable in the infinitary intuitionistic sequent calculus $\lL\iI_{\infty \omega}$ (figure 5).
\end{definition}

\begin{proposition}[base theory + every set is countable]
Let $\phi \Yields \psi$ be an axiom of $\tau$. Conclude that $\overline{\phi \Yields \psi}$ is assertible.
\end{proposition}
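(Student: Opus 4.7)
The plan is essentially trivial once one unpacks the definition. Given that $\phi \Yields \psi$ is an axiom of $\tau$, I would take $A := \{\phi \Yields \psi\}$ as the set of axioms and $E := \emptyset$ as the set of observations. Then $\AND \overline A$ reduces to $\overline{\phi \Yields \psi}$, and $\AND E$ is $\Truth$. It remains to exhibit a derivation in $\lL\iI_{\infty \omega}$ of the sequent
\[
\overline{\phi \Yields \psi},\; \Truth \;\proves\; \overline{\phi \Yields \psi}.
\]

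The only content is that the identity sequent $\chi \proves \chi$ is derivable in $\lL\iI_{\infty \omega}$ for an arbitrary $\chi \in \mathbf I(L_{\infty\omega})$, in this case for $\chi = \overline{\phi \Yields \psi}$. This is a standard fact proved by induction on the complexity of $\chi$: one starts from the atomic identity sequents (which are initial sequents of the calculus), and propagates identity upward through each logical symbol by applying the corresponding left and right rule in sequence (the quantifier $\forall v$ is handled by right-$\forall$ followed by left-$\forall$ with the same eigenvariable, and the implication $\phi \Implies \psi$ by right-$\Implies$ followed by left-$\Implies$ using the inductively available identities on $\phi$ and on $\psi$). The harmless $\Truth$ on the left can be discharged by left weakening, or absorbed into a trivial $\AND$-introduction on the right if one insists on matching $\AND \overline A \wedge \AND E$ syntactically.

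I do not anticipate any real obstacle; the entire lemma is a bookkeeping observation that the definition of $\AA_\tau$ allows one to put axioms directly into the antecedent. If any subtlety arises, it will be in the pedantic step of justifying that the set $A = \{\phi \Yields \psi\}$ qualifies as ``a set of axioms of $\tau$'' in the sense of the definition, which it does immediately because $\tau$ holds of $\phi \Yields \psi$ by hypothesis.
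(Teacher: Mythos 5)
Your proposal is correct and matches the paper, which simply dismisses this proposition as ``immediate from the definition'': taking $A=\{\phi\Yields\psi\}$ and a trivial $E$, the only content is the derivability of the identity sequent $\overline{\phi\Yields\psi}\Proves\overline{\phi\Yields\psi}$ in $\lL\iI_{\infty\omega}$, which your induction on complexity supplies. The one cosmetic caveat is that Figure 5 as printed lists only $\Falsehood\Proves\emptyset$ and $\emptyset\Proves\Truth$ as initial sequents, so your appeal to atomic identity sequents relies on the axiom rule that the paper evidently intends (it is invoked explicitly in the later assertibility lemma) but omits from the figure.
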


\begin{proof}
Immediate from the definition.
\end{proof}

\begin{proposition}[base theory + every set is countable]
Let $\phi_1, \ldots, \phi_n$ be assertible $\mathbf I(L_{\infty \omega})$ sentences. Assume that the sequent $\phi_1, \ldots, \phi_n \Proves \psi$ is derivable in $\lL\iI(L_{\infty \omega})$ for some $\mathbf I(L_{\infty \omega})$ sentence $\psi$. Conclude that $\psi$ is also assertible.
\end{proposition}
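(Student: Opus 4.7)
My plan is to combine the witnessing data for each $\phi_i$ and then chain everything together with cuts. By the definition of assertibility applied to each $\phi_i$, I fix a set $A_i$ of axioms of $\tau$ and a set $E_i$ of true $\mathbf \Sigma(L_{\infty \omega})$ sentences such that $\AND \overline{A_i}, \AND E_i \proves \phi_i$ is derivable in $\lL\iI_{\infty \omega}$. I then set $A := A_1 \cup \cdots \cup A_n$ and $E := E_1 \cup \cdots \cup E_n$. Since $n$ is finite, $A$ is again a set of axioms of $\tau$ and $E$ is again a set of true $\mathbf \Sigma(L_{\infty \omega})$ sentences. It therefore suffices to exhibit a derivation of $\AND \overline A, \AND E \proves \psi$ in $\lL\iI_{\infty \omega}$, which by definition witnesses $\AA_\tau(\psi)$.

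First, for each $i$, I would transform the fixed derivation of $\AND \overline{A_i}, \AND E_i \proves \phi_i$ into a derivation of $\AND \overline A, \AND E \proves \phi_i$. Because $A_i \subseteq A$ and $E_i \subseteq E$, the sequents $\AND \overline A \proves \AND \overline{A_i}$ and $\AND E \proves \AND E_i$ are straightforwardly derivable: for each conjunct of the right-hand side, a single left infinitary conjunction rule selects the matching conjunct on the left, and the right infinitary conjunction rule then collects these subderivations. Two applications of cut, on $\AND \overline{A_i}$ and on $\AND E_i$ respectively, insert the fixed derivation and yield the desired derivation of $\AND \overline A, \AND E \proves \phi_i$.

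Second, I apply $n$ successive cuts, one on each $\phi_i$, with the given derivation of $\phi_1, \ldots, \phi_n \proves \psi$, contracting the duplicated copies of $\AND \overline A$ and $\AND E$ in the antecedent as needed. This produces a derivation of $\AND \overline A, \AND E \proves \psi$, establishing $\AA_\tau(\psi)$. The main obstacle is essentially notational: one must check that the structural rules and the behaviour of infinitary $\AND$ in the particular system $\lL\iI_{\infty \omega}$ of figure 5 match the standard intuitionistic setup so that antecedent contraction and cut on an infinitary conjunction formula are both available. No substantive proof-theoretic ingredient beyond cut is needed; in particular, cut-elimination plays no role in the argument, so the derivation produced may contain cuts even if each $D_i$ was cut-free.
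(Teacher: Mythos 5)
Your proposal is correct and matches the paper's own (one-line) proof, which simply says to combine the axioms and observations for $\phi_1, \ldots, \phi_n$; your version just spells out the routine conjunction manipulations, cuts against the given derivation of $\phi_1, \ldots, \phi_n \Proves \psi$, and the use of the structural rule for contraction. The only cosmetic caveat is that you implicitly use identity sequents $\chi \Proves \chi$, which figure 5 does not display explicitly but which the paper elsewhere treats as part of $\lL\iI_{\infty\omega}$ (``the axiom rule''), so nothing substantive is missing.
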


\begin{proof}
Combine the axioms, and observations for $\phi_1, \ldots, \phi_n$.
\end{proof}

We will show that assertible $\mathbf \Sigma(L_{\omega \omega})$ sentences are true, assuming the iterated validity of the theory $\tau$. Essentially, we will appeal to a validity principle each time we combine proofs of sentences $\phi(\underline a)$ for $a \in b$ into a proof of $\forall x \in \underline b\: \phi(x)$. For each theory $\tau$, it is natural to consider the stronger theory $\tau'$ obtained by adding the validity of $\tau$ to $\tau$. This process may be naturally iterated through the ordinals, by defining $\tau^{(\alpha)}$ to be $(\tau^{(\alpha-1)})'$, when $\alpha$ is a successor ordinal, and $\tau^{(\alpha)} = \Union_{\beta \in \alpha} \tau^{(\beta)}$, when $\alpha$ is a limit ordinal. Since we are working with intentional theories, this description is informal. Furthermore, our formalization diverges for this description because the absence of induction complicates the normally trivial observation that higher-indexed theories are stronger.

\begin{definition}[base theory]
Let $\tau$ be a $\mathbf \Sigma(L_{\omega \omega})$ theory. For each ordinal $\beta$, we define $\tau^{(\beta)}$ to be $\tau$ together with the validity of $\tau^{(\alpha)}$, for each $\alpha \in \beta$.
\end{definition}

The most concise way to formalize this definition is as a fixed point formula $\tau(\alpha, \phi \Yields \psi)$; we write $\tau^{(\alpha)}(\phi \Yields \psi)$ for $\tau(\underline \alpha, \phi \Yields \psi)$. For each ordinal $\alpha$, the formula $\tau^{(\alpha)}(\phi \Yields \psi)$ is generally strictly $\mathbf \Sigma(L_{\omega \omega})$, but of course the function taking each ordinal $\alpha$ to $\tau^{( \alpha)}$ is primitive recursive. It is immediate that any theorem of $\tau^{(\alpha)}$ is a theorem of $\tau^{(\beta)}$, whenever $\alpha \in \beta$. Naturally, we define $\tau^{(\Omega)}$ to be $\tau$ together with the validity of $\tau^{(\alpha)}$, for each ordinal $\alpha$.

\begin{definition}[base theory]
Let the \underline{theory of observations} $\sigma$ be the theory of true $\mathbf \Sigma(L_{\omega \omega})$ sentences, i. e., the theory whose axioms are conditionals $\Truth \Yields \varepsilon$ for true $\mathbf \Sigma(L_{\omega \omega})$ sentences $\varepsilon$.
\end{definition}

It is natural to consider a $\Sigma(L_{\omega \omega})$ theory $\tau$ in combination with the theory $\sigma$. If the provability of a conditional in $\tau$ justifies that conditional on the basis of reason, then the provability of a conditional in $\tau + \sigma$ justifies that conditional on the basis of reason \emph{and fact.} A finitary agent in an infinitary universe cannot verify sentences directly, but if they recognize the validity of a parameter-free theory $\tau$, they may quickly prove the validity of $\tau + \sigma$. In a sense, the finitary agent proves that an infinitary agent may take the true $\mathbf \Sigma(L_{\omega \omega})$ to be axioms.

\begin{lemma}[base theory + every set is countable]
Let $\tau$ be a $\mathbf \Sigma(L_{\omega \omega})$ theory extending the base theory. Let $A$ be a set of axioms of $\tau$; let $E$ be a set of true $\mathbf \Sigma(L_{\infty \omega})$ sentences; and let $\phi_1\ldots \phi_n , \psi$ be $\mathbf \Sigma(L_{\omega \omega})$ formulas. Assume that the sequent $\AND \overline A, \AND E, \phi_1, \ldots, \phi_n \Proves \psi$ is derivable in $\lL\iI_{\infty \omega}$. Conclude that the conditional $\phi_1 \And \ldots \And \phi_n \Yields \psi$ is provable in $(\tau+ \sigma)^{(\Omega)}$.
\end{lemma}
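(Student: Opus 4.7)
The plan is to apply cut-elimination to the given $\lL\iI_{\infty \omega}$ derivation and then transform the resulting cut-free derivation into a positivistic proof of $\phi_1 \And \cdots \And \phi_n \Yields \psi$ by recursion on its well-founded structure. Intuitionistic infinitary cut-elimination is available in our base theory (via the intuitionistic adaptation of Feferman's argument), and the subformula property then guarantees that every sequent $\Gamma \Proves \chi$ appearing in the resulting derivation features only subformulas of $\AND \overline A$, $\AND E$, the $\phi_i$, and $\psi$.

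For each such sequent $\Gamma \Proves \chi$, I will produce a positivistic proof of an associated conditional $\widetilde \Gamma \Yields \widetilde \chi$ in $(\tau + \sigma)^{(\alpha)}$ for some ordinal $\alpha$ that tracks the height of the subderivation above the sequent. Subformulas of the $\phi_i$, of $\psi$, and of $\AND E$ are already $\mathbf \Sigma$ and translate to themselves, with true $\mathbf \Sigma$ sentences from $E$ entering proofs as axioms of $\sigma$. Subformulas of $\AND \overline A$ take the form of universally quantified implications between $\mathbf \Sigma$ formulas; their universal quantifiers are absorbed by instantiation, and a fully instantiated implication $\phi(\vec t) \Implies \psi(\vec t)$ coming from an axiom $\phi \Yields \psi$ of $\tau$ is eliminated from the antecedent by applying that axiom to splice the two subproofs when the corresponding left-implication rule is processed. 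The rules for the positive connectives $\And$, $\Or$, $\exists$ and bounded $\forall$ translate directly into applications of the logical axioms of figure 3.

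The crucial step is the right rule for bounded $\forall$ (equivalently, the infinitary right-conjunction rule), where we have, for each $a \in b$, a derivation of $\Gamma \Proves \chi(\underline a)$, and must combine these into a positivistic proof of $\widetilde \Gamma \Yields \forall x \in \underline b\: \chi(x)$. By the induction hypothesis we obtain a positivistic proof $p_a$ in $(\tau + \sigma)^{(\alpha_a)}$ for each $a \in b$; set $\alpha = \sup_{a \in b} \alpha_a$ and work in $(\tau + \sigma)^{(\alpha + 1)}$, which contains the validity of $(\tau + \sigma)^{(\alpha)}$ as an axiom. Using that validity axiom together with the primitive recursive function $a \mapsto p_a$, we derive that if $\widetilde \Gamma$ holds then each $\chi(\underline a)$ is true, and hence $\forall x \in \underline b\: \chi(x)$ is true by the semantic clause for the bounded universal quantifier.

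The main obstacle is ensuring the overall translation is uniformly primitive recursive in the cut-free derivation, so that the assignment $a \mapsto p_a$ at each infinitary step can literally be exhibited as a primitive recursive set function to which the validity axiom applies. This requires synchronized recursions along the derivation's well-founded rank and along the ordinal $\alpha$ attached to each node; because $\alpha$ may grow through all ordinals as the derivation deepens, the resulting positivistic proof of $\phi_1 \And \cdots \And \phi_n \Yields \psi$ lives in $(\tau + \sigma)^{(\Omega)}$, exactly as claimed.
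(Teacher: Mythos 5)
Your overall skeleton (cut-elimination for $\lL\iI_{\infty\omega}$, then a recursion over the cut-free derivation that assigns to each sequent a positivistic conditional provable in $(\tau+\sigma)^{(\alpha)}$ with $\alpha$ tracking depth, and validity axioms consumed at infinitary nodes) is the paper's strategy, but two points break as stated. First, your treatment of $E$ is not workable: the sentences of $E$ are $\mathbf\Sigma(L_{\infty\omega})$, i.e.\ genuinely infinitary, so they are neither axioms of $\sigma$ (which by definition consists only of true $\mathbf\Sigma(L_{\omega\omega})$ sentences) nor formulas that may occur in a finitary positivistic proof; they cannot ``translate to themselves.'' Relatedly, the infinitary inferences you must handle are not a right rule for bounded $\forall$: in $\lL\iI_{\infty\omega}$ a bounded universal in a $\mathbf\Sigma(L_{\omega\omega})$ succedent is introduced by a single-premise eigenvariable rule, which is handled by deep inference (running the subproof below the quantifier, as in the completeness argument) and needs no validity axiom at all. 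The genuinely infinitary steps occur on the \emph{left}, when the infinite disjunctions inside the $E$-formulas are analyzed; the paper first replaces $E$ by atomic axioms and content axioms $\forall x\:\OR(\{x\not\in\underline b\}\cup\{x=\underline a \suchthat a\in b\})$, and the crucial case is the left disjunction rule applied to such a disjunction, with one premise per element of $b$.

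Second, the ``main obstacle'' you flag is a real gap in your argument, and it is resolved not by establishing uniformity but by using $\sigma$, which is exactly why $\sigma$ appears in the conclusion. The validity axiom of $(\tau+\sigma)^{(\alpha)}$ consumes a \emph{provability statement}, not a proof-producing function: since each $p_a$ exists, the single sentence $\forall a\in\underline b\:\PP^{(\alpha)}(\cdots)$ is a true $\mathbf\Sigma(L_{\omega\omega})$ sentence and may simply be introduced as an observation (an axiom of $\sigma$); combining it with the truth-predicate clauses and the validity of $(\tau+\sigma)^{(\alpha)}$ yields the desired conditional in $(\tau+\sigma)^{(\alpha+1)}$, with no need for the assignment $a\mapsto p_a$ to be primitive recursive or even internally definable. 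Your alternative of exhibiting a primitive recursive $a\mapsto p_a$ and having the theory use it would require proving, inside a positivistic theory, the correctness of a function defined by recursion along the well-founded derivation, i.e.\ an induction principle that is precisely what the positivistic framework lacks; so the ``synchronized recursions'' step does not close, whereas the observation-based move makes it unnecessary.
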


\begin{proof}
We first observe that the true $\mathbf \Sigma(L_{\infty \omega})$ formulas are derivable from a simple subclass of such formulas: the atomic axioms and the content axioms. The \underline{atomic axioms} are simply the true atomic sentences; recall that we are treating $\neq$ as relation symbol rather than as an abbreviation. The \underline{content axioms} are the universally closed disjunctions: $\forall x\in \underline b \: \OR_{a \in b} x = \underline a$. To simplify the presentation, we render each content axiom as the derivably equivalent formula $\forall x\: \OR  \{x \not \in \underline b\} \union \{x = \underline a \suchthat a \in b\}$.

We assume the existence of a derivation $\AND \overline A, \AND E, \phi_1, \ldots, \phi_n \Proves \psi$. It follows that there is a derivation of $\AND \overline A, \AND E_{\text{atomic}}, \AND E_{\text{content}}, \phi_1, \ldots, \phi_n \Proves \psi$, for some set $E_{\text{atomic}}$ of atomic axioms, and some set $ E_{\text{content}}$ of content axioms. By the cut-elimination theorem, there is a cut-free derivation of this sequent. We inductively construct, for each sequent in the derivation, a proof from the conjunction of the $\mathbf \Sigma(L_{\omega \omega})$ formulas in the sequent's antecedent to the $\mathbf\Sigma (L_{\omega \omega})$ formula in its succedent. By induction, any formula in the succedent of a sequent in the fixed derivation must be $\mathbf \Sigma (L_{\omega \omega})$. If the succedent is empty, we instead construct a proof to $\Falsehood$. Note that our use of induction is justified because the argument is constructive.

The following cases are trivial: the axiom rule, the structural rule, the weakening rules, the truth rule $(\proves \Truth)$ and the falsehood rule $(\Falsehood \Proves)$. The right implication rule $(\Proves \Yields)$ does not occur. In the left implication rule $(\Yields \Proves)$ case, the new conditional must be an instance of an axiom; concatenating the proofs for the two input sequents with this axiom, we obtain a proof for the output sequent.

In cases where the rule application produces a $\mathbf \Sigma(L_{\omega \omega})$ formula, our argument appeals to deep inference, following the pattern in our proof of completeness. This applies to the conjunction rules $(\AND \Proves)(\Proves \AND)$, the disjunction rules $(\OR \Proves)(\Proves \OR)$, the existential quantification rules $(\exists \Proves)(\Proves \exists)$, and the universal quantification rules $(\forall \Proves)(\Proves \forall)$. We therefore consider the cases in which an application of one of these rules does not produce a $\mathbf \Sigma (L_{\omega \omega})$ formula; such an application cannot occur on the right.

If the left conjunction rule $(\AND \Proves)$ produces a non-$\mathbf \Sigma(L_{\omega \omega})$ formula, then that formula is a conjunction of universally closed axioms of $\tau$, of content axioms, or of atomic axioms. In the first two cases, the $\mathbf \Sigma(L_{\omega \omega})$ formulas in the output sequent are exactly the same as the $\mathbf \Sigma(L_{\omega \omega})$ of the input sequent, so the proof for the input sequent works for the output sequent. In the third case, the loss of an atomic assumption $\epsilon$ is innocuous, because $\Truth \Yields \epsilon$ is a theorem of $\sigma$.

If the left universal quantification rule $(\forall \Proves)$ produces a non-$\mathbf \Sigma (L_{\omega \omega})$ formula, then the universal quantifier is applied to a non-$\mathbf \Sigma(L_{\omega \omega})$ input formula. Indeed, either the input formula includes an implication symbol $\Yields$, or the input formula is of the form $\OR\{t \not \in \underline b\} \union \{t = \underline a \suchthat a \in b\}$. In either case, the proof for the input sequent works for the output sequent. The left existential quantification rule $(\Exists \Proves)$ only produces $\mathbf \Sigma (L_{\omega \omega})$ formulas.

If the left disjunction rule $(\OR \Proves)$ produces a non-$\mathbf \Sigma(L_{\omega \omega})$ formula, then that formula is of the form $\OR \{t \not \in \underline b\} \union \{t = \underline a \suchthat a \in b\}$. Let $x_1, \ldots, x_m$ be the free variables in the output sequent; let $\phi_1(x_1, \ldots, x_m), \ldots, \phi_n(x_1, \ldots, x_m)$ be the $\mathbf \Sigma(L_{\omega \omega})$ formulas on the left of the output sequent; and let $\psi(x_1, \ldots, x_m)$ be the $\mathbf \Sigma(L_{\omega \omega})$ formula on the right of the output sequent, or the formula $\Falsehood$ if the there is no formula on the right of the output sequent. Thus, there is a least ordinal $\alpha$ such that 
$$ t(x_1, \ldots, x_m)  \not \in \underline b \And \phi_1(x_1, \ldots, x_m) \And \ldots \And \phi_n(x_1, \ldots, x_m) \Yields \psi(x_1, \ldots, x_m)$$
has a proof from $(\tau + \sigma)^{(\alpha)}$, and for all $a \in b$,
$$ t(x_1, \ldots, x_m) = \underline a \And  \phi_1(x_1, \ldots, x_m) \And \ldots \And \phi_n(x_1, \ldots, x_m) \Yields \psi(x_1, \ldots, x_m)$$
has a proof from $(\tau + \sigma)^{(\alpha)}$.
It follows that 
$$
\forall a \in \underline b \:\PP^{( \alpha)}(t(x_1, \ldots, x_m) = \underline a \And \phi_1(x_1, \ldots, x_m) \And  \ldots \And \phi_n(x_1, \ldots, x_m) \Yields \psi(x_1, \ldots, x_m))
$$
is a true $\mathbf \Sigma (L_{\omega \omega})$ formula, where $\PP^{(\alpha)}$ is the provability predicate for the theory $(\tau + \sigma)^{(\alpha)}$.
We now construct a proof of the desired conditional using the axioms of $(\tau +\sigma)^{(\alpha+1)}$:
\hspace{-100pt}\begin{align*}
&\phi_1(x_1, \ldots, x_m)  \And \ldots \And \phi_n(x_1, \ldots, x_m)
\\ & \Yields
[ t(x_1, \ldots, x_m) \not \in \underline b \And \phi_1(x_1, \ldots, x_m)\And \ldots \And \phi_n(x_1, \ldots, x_m)]
\\  &\qquad\qquad
\Or
[ t(x_1, \ldots, x_m) \in \underline b \And \phi_1(x_1, \ldots, x_m)\And \ldots \And \phi_n(x_1, \ldots, x_m)]
\\ & \Yields
\psi(x_1, \ldots, x_m)
\Or
[ t(x_1, \ldots, x_m) \in \underline b \And \TT(\phi_1(\underline x_1, \ldots, \underline x_m)) \And \ldots \And \TT(\phi_n(\underline x_1, \ldots, \underline x_m) )
\\  &\qquad\qquad\And\forall a \in \underline b\: \PP^{( \alpha)}(  t(x_1, \ldots, x_m)= \underline a \And \phi_1(x_1, \ldots, x_m) \And \ldots \And \phi_n(x_1, \ldots, x_m) \Yields \psi(x_1, \ldots, x_m)]
\\ & \Yields
\psi(x_1, \ldots, x_m)
\Or
[\TT(\phi_1(\underline x_1, \ldots, \underline x_m) \And \ldots \And \phi_n(\underline x_1, \ldots, \underline x_m) )
\\  &\qquad\qquad\And\PP^{( \alpha)}( t(x_1, \ldots, x_m) = \underline{ t(x_1, \ldots, x_m)} \And \phi_1(x_1, \ldots, x_m) \And \ldots\And  \phi_n(x_1 , \ldots, x_m) \Yields \psi(x_1, \ldots, x_m)]
\\ & \Yields
\psi(x_1, \ldots, x_m)
\Or
[\TT(\phi_1(\underline x_1, \ldots, \underline x_m) \And \ldots \And \phi_n(\underline x_1, \ldots, \underline x_m) ) \And \TT(t(\underline x_1, \ldots, \underline x_m) = \underline{t(x_1, \ldots, x_m)})
\\  &\qquad\qquad \And\PP^{( \alpha)}(t(\underline x_1, \ldots, \underline x_m) = \underline{ t(x_1, \ldots, x_m)} \And \phi_1(\underline x_1, \ldots,\underline x_m) \And \ldots\And \phi_n( \underline x_1, \ldots, \underline x_m) \Yields \psi(\underline x_1, \ldots,\underline x_m)]
\\ & \Yields
\psi(x_1, \ldots, x_m)
\Or
\TT(\psi(\underline x_1, \ldots,\underline x_m))
\\ & \Yields
\psi(x_1, \ldots, x_m)
\end{align*}
In the second implication, we apply a theorem of $\sigma$; in the fourth implication, we apply a property of the the truth predicate; in the fifth implication, we apply the validity of $(\tau + \sigma)^{(\alpha)}$. In \cref{section: truth}, we defined the truth predicate just for sentences without function symbols; however, if we extend this definition to sentences with function symbols by including the computation of values into verifications, then the conditional $$\Truth \Yields \TT(t(\underline x_1, \ldots, \underline x_m) = \underline{t(x_1, \ldots, x_m)})$$ is provable.

Thus, we have a proof in $(\tau+\sigma)^{(\Omega)}$ from the conjunction of the $\Sigma(L_{\omega \omega})$ formulas in the antecedent of the sequent $\AND \overline A, \AND E, \phi_1, \ldots, \phi_n \Proves \psi$, to $\psi$. If $\AND E$ is not a $\mathbf \Sigma(L_{\omega \omega})$ sentence, then we have a proof of $\phi_1 \And \ldots \And \phi_n \Yields \psi$, as desired. If $\AND E$ is $\mathbf \Sigma(L_{\omega \omega})$ formula, then we instead have a proof of $(\AND E) \And \phi_1 \And \ldots \phi_n \Yields \psi$, which can be turned into a proof of the desired conditional by applying the axiom $\Truth \Yields \AND E$ of $\sigma$.
\end{proof}

\begin{proposition}[base theory + every set is countable]
Let $\tau$ be a $\mathbf \Sigma (L_{\omega \omega})$ theory extending the base theory. Let $\phi \Yields \psi$ be a $\mathbf \Sigma (L_{\omega \omega})$ conditional. Assume that $\overline{\phi \Yields \psi}$ is assertible from $\tau$. Conclude that $\phi \Yields \psi$ is a theorem of $(\tau+ \sigma)^{(\Omega)}$.
\end{proposition}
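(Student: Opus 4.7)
The plan is to reduce directly to the preceding lemma, which already handles the hard work of translating cut-free infinitary intuitionistic derivations into positivistic proofs in $(\tau + \sigma)^{(\Omega)}$. By the definition of assertibility (\cref{block: assertibility definition}), the hypothesis supplies a set $A$ of axioms of $\tau$ and a set $E$ of true $\mathbf{\Sigma}(L_{\infty \omega})$ sentences such that the sequent $\AND \overline{A},\, \AND E \Proves \overline{\phi \Yields \psi}$ is derivable in $\lL\iI_{\infty \omega}$. I need to massage this into a sequent of the shape required by the preceding lemma.

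First I would derive the auxiliary sequent $\overline{\phi \Yields \psi},\, \phi \Proves \psi$ in $\lL\iI_{\infty \omega}$, starting from the initial sequents $\phi \Proves \phi$ and $\psi \Proves \psi$: write $\overline{\phi \Yields \psi}$ as $\forall x_1 \cdots \forall x_n\,(\phi \Yields \psi)$ where $x_1,\ldots,x_n$ are the free variables of $\phi$ and $\psi$ that are not parameters, and apply the left universal quantification rule $n$ times (instantiating each bound $x_i$ to the corresponding free $x_i$), followed by one application of the left implication rule. Then I would apply the cut rule to combine this auxiliary derivation with the derivation given by assertibility, yielding a derivation of $\AND \overline{A},\, \AND E,\, \phi \Proves \psi$ in $\lL\iI_{\infty \omega}$.

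Finally, I would invoke the preceding lemma with $n = 1$ and $\phi_1 = \phi$ to conclude that the conditional $\phi \Yields \psi$ is provable in $(\tau + \sigma)^{(\Omega)}$, as required.

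I do not anticipate any significant obstacle. The translation from a derivation of the universally closed implication on the right to a derivation with the antecedent shifted to the left is a purely formal manipulation in intuitionistic sequent calculus, and the real content, namely the inductive construction of the positivistic proof with its appeal to iterated reflection, has already been carried out in the preceding lemma. The only point deserving mild care is that the free variables stripped off by the $\forall$-left applications must match the free variables appearing in the statement of the conditional $\phi \Yields \psi$, but this is immediate from how $\overline{\phi \Yields \psi}$ is defined as the universal closure.
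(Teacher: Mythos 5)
Your proposal is correct and follows essentially the same route as the paper: obtain the derivation of $\AND \overline A, \AND E \Proves \overline{\phi \Yields \psi}$ from the definition of assertibility, pass to a derivation of $\AND \overline A, \AND E, \phi \Proves \psi$, and invoke the preceding lemma. The paper simply states that the latter sequent "is also derivable" without detail, whereas you make the step explicit via instantiation, the left implication rule, and a cut — which is harmless, since the lemma itself applies cut-elimination to whatever derivation it is given.
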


\begin{proof}
There is a set $A$ of axioms of $\tau$, and a set $E$ of true $\mathbf \Sigma(L_{\infty \omega})$ formulas such that the sequent 
$\AND \overline A, \AND E \Proves \overline{\phi \Yields \psi}$ is derivable in $\lL \iI_{\infty \omega}$. It follows that the sequent $\AND \overline A, \AND E, \phi \Proves \psi$ is also derivable. By the theorem, the theory $(\tau + \sigma)^{(\Omega)}$ proves $\phi \Yields \psi$.
\end{proof}

\begin{proposition}[base theory + every set is countable]
Let $\tau$ be a $\mathbf \Sigma (L_{\omega \omega})$ theory extending the base theory. Let $ \psi$ be a $\mathbf \Sigma (L_{\omega \omega})$ sentence. Assume that $\psi$ is assertible from $\tau$. Conclude that $\Truth \Yields \psi$ is a theorem of $(\tau+ \sigma)^{(\Omega)}$.
\end{proposition}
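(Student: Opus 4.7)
The plan is to reduce this immediately to the preceding proposition by specializing $\phi$ to the atomic sentence $\Truth$. Since $\psi$ is already a sentence, $\Truth \Yields \psi$ is itself a sentence, and its universal closure $\overline{\Truth \Yields \psi}$ is just $\Truth \Yields \psi$. Hence it is enough to show that the assertibility of $\psi$ entails the assertibility of $\Truth \Yields \psi$.

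First I would unpack the hypothesis using \cref{block: assertibility definition}: there exist a set $A$ of axioms of $\tau$ and a set $E$ of true $\mathbf{\Sigma}(L_{\infty\omega})$ sentences such that the sequent $\AND \overline A, \AND E \Proves \psi$ is derivable in $\lL\iI_{\infty\omega}$. Applying left weakening to insert $\Truth$ into the antecedent yields a derivation of $\AND \overline A, \AND E, \Truth \Proves \psi$, and a single application of the right implication rule $(\Proves \Yields)$ then produces a derivation of $\AND \overline A, \AND E \Proves \Truth \Yields \psi$. With the same $A$ and $E$, this witnesses that $\Truth \Yields \psi$ is assertible from $\tau$.

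Invoking the preceding proposition for the conditional $\Truth \Yields \psi$ now yields a proof of $\Truth \Yields \psi$ in $(\tau + \sigma)^{(\Omega)}$, which is the stated conclusion. There is no real obstacle; the argument is a one-line corollary once one observes that the intuitionistic deducibility of $\psi$ transfers to that of $\Truth \Yields \psi$ in a single structural step. As an alternative route that avoids going through the preceding proposition, one could apply the central lemma directly with the list of $\mathbf{\Sigma}(L_{\omega\omega})$ formulas in the antecedent taken to be empty (so that $\phi_1 \And \cdots \And \phi_n$ is the empty conjunction $\Truth$), obtaining the conditional $\Truth \Yields \psi$ in $(\tau + \sigma)^{(\Omega)}$ at once.
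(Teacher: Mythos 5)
Your argument is correct and is essentially the paper's own proof: the paper likewise observes that the assertibility of $\psi$ gives the assertibility of $\Truth \Yields \psi$ (your weakening-plus-right-implication step makes this explicit) and then invokes the preceding proposition on assertible conditionals, noting that the closure of $\Truth \Yields \psi$ is itself since $\psi$ is a sentence. Your alternative route through the central lemma (with an empty, or a single $\Truth$, antecedent list) is also fine but adds nothing essential.
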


\begin{proof}
If $\phi$ is assertible, then $\Truth \Yields \phi$ is also assertible. Apply the above corollary.
\end{proof}

\begin{definition}[base theory]
We extend the notion of a \underline{verification} $V$ to $\mathbf \Sigma (L_{\infty \omega})$ sentences as follows:
\begin{enumerate}\setcounter{enumi}{5}
\item If $\OR \Kappa$ is in $V$, then some element $\phi$ of $\Kappa$ is in $V$.
\item If $\AND \Kappa$ is in $V$, then each element $\phi$ of $\Kappa$ is in $V$.
\end{enumerate}
The \underline{truth} of a $\mathbf \Sigma(L_{\infty \omega})$ sentence is then defined, as before, as the existence of a verification containing that sentence.
\end{definition}

This extended truth predicate evidently satisfies the expected axioms for set conjunction and set disjunction:
\begin{enumerate}\setcounter{enumi}{5}
\item $ \TT(\OR \Kappa) \Equivalent \exists \phi \in \Kappa\: \TT( \underline \phi)$
\item $ \TT(\AND \Kappa) \Equivalent \forall \phi \in \Kappa \: \TT(\underline \phi)$
\end{enumerate}

The extended truth predicate is again a $\Sigma(L_{\omega \omega})$ formula, so we can reason about the truth of $\mathbf \Sigma (L_{\omega \omega})$ sentences using finitary, positivistic reasoning. In infinitary deductions it is convenient to establish the equivalence between $\TT(\OR \Kappa)$ and $\OR_{\phi \in \Kappa} \TT(\underline \phi)$, and likewise for conjunction. Bridging the gap between between $\forall \phi \in \Kappa\: \TT( \underline \phi)$ and  $\OR_{\phi \in \Kappa} \TT(\underline \phi)$ requires an infinitary assumption; the equivalence follows from a $\mathbf \Sigma(L_{\omega \omega})$ sentence essentially listing the elements of $\Kappa$, for example a conjunction of the content axiom for $\Kappa$ with the relevant atomic axioms.

\begin{lemma}[base theory]
 Let $\psi(x_1, \ldots, x_n)$ be a $\mathbf \Sigma(L_{\infty \omega})$ formula. There is a set $A_\psi$ of axioms of the base theory and a set $E_\psi$ of true $\mathbf \Sigma(L_{\infty \omega})$ sentences such that the sequents $\AND \overline A_\psi, \AND E_\psi, \psi(x_1, \ldots, x_n) \Proves \TT(\psi(\underline x_1, \ldots, \underline x_n))$ and $\AND \overline A_\psi, \AND E_\psi, \TT(\psi(\underline x_1, \ldots, \underline x_n)) \Proves \psi(x_1, \ldots, x_n)$ are provable in $\lL\iI_{\infty \omega}$.
\end{lemma}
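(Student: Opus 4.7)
The plan is to argue by induction on the structure of $\psi$, constructing $A_\psi$ and $E_\psi$ by primitive recursion on the formula tree and adjoining, at each constructor, the universal closure of the relevant semantic clause for $\TT$ from the proposition in \cref{section: truth}. For atomic $\psi$---including $\Truth$, $\Falsehood$, and literals in $=$, $\neq$, $\in$, $\not\in$---clause (5) already gives the biconditional $\psi(x_1, \ldots, x_n) \Iff \TT(\psi(\underline{x_1}, \ldots, \underline{x_n}))$ as a theorem of the base theory, so I place its two directions into $A_\psi$ and take $E_\psi = \emptyset$; the sequents are then immediate.

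For the connective cases $\psi = \phi_1 \And \phi_2$, $\phi_1 \Or \phi_2$, $\AND \Kappa$, $\OR \Kappa$, I set $A_\psi$ equal to the union of the axiom sets of the immediate subformulas together with the universally closed form of the semantic clause for the outer connective, and similarly for $E_\psi$. The sequents in $\lL\iI_{\infty\omega}$ are built by applying the corresponding left/right connective rules to the inductively given subderivations, then pulling the outer $\TT$ through via the semantic clause. For the bounded universal case $\psi(\overline x) = \forall y \in t(\overline x)\: \phi(y, \overline x)$, I include in $A_\psi$ the two conditionals comprising the universally closed form of clause (4),
\[\TT(\forall y \in \underline b\: \phi(y, \underline{a_1}, \ldots, \underline{a_n})) \Yields \forall a \in b\: \TT(\phi(\underline a, \underline{a_1}, \ldots, \underline{a_n}))\]
together with its converse; both are theorems of the base theory by essentially the same ``immediate from the definition'' argument that proves the proposition, the reverse direction invoking $\Sigma$ collection to unite verifications of the $\phi(\underline a, \ldots)$ into a verification of $\forall y \in \underline b\: \phi(y, \ldots)$. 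Starting from $\forall y \in t(\overline x)\: \phi(y, \overline x)$, the derivation in $\lL\iI_{\infty \omega}$ introduces an eigenvariable $y$ under the hypothesis $y \in t(\overline x)$, extracts $\phi(y, \overline x)$ by the left bounded $\forall$ rule, applies the inductive derivation for $\phi$ to reach $\TT(\phi(\underline y, \underline{\overline x}))$, discharges with the right bounded $\forall$ rule, and concludes via the semantic clause; the reverse direction is symmetric. The unbounded existential case is parallel, using clause (3).

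The main obstacle I anticipate is the bookkeeping at the infinitary stages. When $\psi = \AND \Kappa$ or $\OR \Kappa$ with $\Kappa$ an infinite set of subformulas, the unions $A_\psi = \Union_{\phi \in \Kappa} A_\phi$ and $E_\psi = \Union_{\phi \in \Kappa} E_\phi$ must exist as sets and be constructed uniformly; this calls for verifying that the assignments $\phi \mapsto A_\phi$ and $\phi \mapsto E_\phi$ are definable as primitive recursive set functions of the formula tree, so that $\Sigma$ collection in the base theory yields the required unions. A secondary subtlety is that the formula $\TT(\psi(\underline{x_1}, \ldots, \underline{x_n}))$ implicitly invokes a naming function $\underline{\cdot}$ and a syntactic substitution operator applied to the values of the $x_i$; both are primitive recursive, and the definition of verification should be read in the extended form sketched after its introduction in \cref{section: truth}, so that these function symbols are evaluated along the way.
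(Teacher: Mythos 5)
Your overall strategy (induction on the syntax of $\psi$, driven by the semantic clauses for $\TT$) is the same as the paper's, but there is a genuine gap at the infinitary conjunction and disjunction cases, which is exactly where the real content of $E_\psi$ lies. The semantic clauses render $\TT(\AND\Kappa)$ and $\TT(\OR\Kappa)$ as \emph{bounded quantifications over the set} $\Kappa$, namely $\forall\phi\in\Kappa\:\TT(\underline\phi)$ and $\exists\phi\in\Kappa\:\TT(\underline\phi)$. Your plan of ``applying the left/right connective rules to the inductive subderivations, then pulling the outer $\TT$ through via the semantic clause'' leaves unbridged the gap between these bounded quantifiers and the infinitary connectives $\AND_{\phi\in\Kappa}$, $\OR_{\phi\in\Kappa}$ that the rules of $\lL\iI_{\infty\omega}$ manipulate. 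Concretely, to derive $\AND\Kappa\Proves\TT(\AND\Kappa)$ you must derive $\TT(v)$ for an eigenvariable $v$ from the sole hypothesis $v\in\underline\Kappa$, and nothing in your sequent tells you that $v$ equals one of the $\underline\phi$ with $\phi\in\Kappa$; dually, for $\TT(\OR\Kappa)\Proves\OR\Kappa$ you must case-split on which element of $\Kappa$ the witness is. Since the base theory is parameter-free, such membership facts about the code $\Kappa$ are not theorems; they must be supplied as observations. So $E_\psi$ has to contain true $\mathbf\Sigma(L_{\infty\omega})$ sentences essentially listing the elements of $\Kappa$ --- the content axiom $\forall x\in\underline\Kappa\:\OR_{\phi\in\Kappa}x=\underline\phi$ together with the atomic axioms $\underline\phi\in\underline\Kappa$ --- and likewise atomic axioms for the members of the transitive closure of $\psi$. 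This is precisely the point the paper flags in the paragraph preceding the lemma, and its proof takes $E_\psi$ to be the atomic and content axioms for the transitive closure of $\psi$; your $E_\psi$ (empty at atoms, mere unions plus ``semantic clauses'' at connectives) contains none of this, so the right-$\AND$ and left-$\OR$ steps of your construction cannot be completed.

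Two smaller points. First, $A_\psi$ must consist of \emph{axioms of the base theory}, so you should include the finitely many base-theory axioms sufficient to \emph{prove} the truth properties (1)--(7) (one fixed finite set works for every $\psi$), rather than placing the clauses themselves in $A_\psi$; note also that the universally closed clauses are biconditionals with free variables, hence not $\mathbf\Sigma(L_{\infty\omega})$ sentences, so they cannot be moved into $E_\psi$ either. Second, once $E_\psi$ is defined directly from the transitive closure of $\psi$ (a primitive recursive set operation) and $A_\psi$ is taken finite, your anticipated difficulty about uniformly forming the unions $\Union_{\phi\in\Kappa}A_\phi$ and $\Union_{\phi\in\Kappa}E_\phi$ disappears; the recursion on the formula tree is then only needed to build the derivations, as in the finitary lemma, and your treatment of the bounded universal and unbounded existential cases is fine as written.
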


\begin{proof}
The list of basic truth properties (1) - (7) is finite, so finitely many axioms of $\tau_0$ are sufficient to establish them. Let $A_\psi$ be the set of these axioms. Let $E_\psi$ consist of
all atomic axioms and content axioms for elements in the transitive closure of $\psi$, including content axioms for the conjunctions and disjunctions in $\psi$. As in the finitary case, we reduce $\TT(\psi(\underline x_1, \ldots, \underline x_n))$ to a $\mathbf \Sigma (L_{\omega \omega})$ formula that differs from $\psi(x_1, \ldots, x_n)$ only in that it expresses infinitary conjunctions and infinitary disjunctions using bounded quantifiers; we then use the content axioms and atomic axioms to show that this formula is equivalent to $\psi(x_1, \ldots x_n)$.
\end{proof}

\begin{theorem}[base theory + every set is countable + $(\tau+\sigma)^{(\Omega)}$ is valid]\label{block: assertibility properties}
Assume that $\tau$ is an $\mathbf \Sigma (L_{\omega \omega})$ theory extending the base theory. The assertibility predicate for $\tau$ respects the semantics of positivistic connectives:
\begin{enumerate}
\item $\AA_\tau(\phi \And \psi) \Equivalent \AA_\tau(\phi) \And \AA_\tau(\psi)$
\item $\AA_\tau(\phi \Or \psi) \Equivalent \AA_\tau(\phi) \Or \AA_\tau(\psi)$
\item $\AA_\tau(\AND_{\phi \in \underline{\Phi}} \phi) \Equivalent \forall \phi \in \Phi\: \AA_\tau(\phi)$
\item $\AA_\tau(\OR_{\phi \in \underline{\Phi}} \phi) \Equivalent \exists \phi \in \Phi\: \AA_\tau(\phi)$
\item $\AA_\tau(\exists v\: \psi) \Equivalent \exists a\:\AA_\tau(\psi^v_{\underline a})$
\item $\AA_\tau(\forall v \in \underline b\: \phi) \Equivalent \forall a \in b\: \AA_\tau(\psi^v_{\underline a})$
\end{enumerate}
The assertibility predicate satisfies modus ponens and the instantiation rule:
\begin{enumerate}\setcounter{enumi}{6}
\item $\AA_\tau(\phi) \And \AA_\tau(\phi \Yields \psi) \Yields \AA_\tau(\phi)$
\item $\AA_\tau(\forall v\: \phi) \Yields \AA_\tau(\phi^v_{\underline a})$
\end{enumerate}
Truths, tautologies, and the axioms of $\tau$ are assertible:
\begin{enumerate}\setcounter{enumi}{8}
\item $\TT(\phi) \Yields \AA_{\tau}(\phi)$, where the variable $\phi$ ranges over $ \mathbf \Sigma(L_{\omega \omega})$ sentences
\item $\DD_{\lL\iI}(\emptyset \Proves \phi) \Yields \AA_{\tau}(\phi)$, where the variable $\phi$ ranges over $\mathbf I (L_{\omega \omega})$ sentences
\item $\tau(\phi \Yields \psi) \Yields \AA_{\tau}(\overline{\phi \Yields \psi})$, where the variables $\phi$ and $\psi$ range over $\mathbf \Sigma(L_{\omega \omega})$ formulas.
\end{enumerate}
Finally, assertibility is valid in the following sense:
\begin{enumerate}\setcounter{enumi}{11}
\item $\AA_\tau(\phi) \Equivalent \TT(\phi)$, where $\phi$ ranges over $ \mathbf \Sigma(L_{\omega \omega})$ sentences
\item $\TT(\phi) \And \AA_\tau(\phi \Yields \psi) \Yields \TT(\psi)$, where the variables $\phi$ and $\psi$ range over $\mathbf \Sigma(L_{\omega \omega})$ formulas
\end{enumerate}

\end{theorem}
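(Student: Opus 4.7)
The plan is to deduce each clause from two facts already in hand: the lemma equating $\mathbf{\Sigma}(L_{\infty\omega})$ formulas with their truth predicate modulo axioms and observations, and the proposition reducing assertibility of a $\mathbf{\Sigma}$ conditional to its provability in $(\tau+\sigma)^{(\Omega)}$. Most semantic clauses are verified directly inside $\lL\iI_{\infty\omega}$; the genuinely external content enters only in (9), (12), and (13), and only through the validity hypothesis.

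For the right-to-left directions of clauses (1)--(6) I would pool the axiom sets and observation sets of the assertibility witnesses for the components, weaken, and apply the corresponding right rule of $\lL\iI_{\infty\omega}$. The left-to-right directions call for inversion: after applying cut-elimination to a derivation of $\AND \overline{A}, \AND E \Proves \chi$, I invert on the outermost connective of $\chi$. Conjunction and disjunction inversion are standard; the $\exists$, bounded $\forall$, set-indexed $\AND$, and set-indexed $\OR$ cases are the usual countable inversions available under the assumption that every set is countable. Clauses (7) and (8) come from applying the left implication and left universal rules of $\lL\iI_{\infty\omega}$ to the pooled witnesses of the hypotheses. Clause (10) holds with $A = E = \emptyset$, and clause (11) is precisely the already-proven proposition that each axiom of $\tau$ is assertible.

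For clause (9), the lemma preceding the theorem supplies, for each $\mathbf{\Sigma}(L_{\omega\omega})$ sentence $\phi$, an $\lL\iI_{\infty\omega}$ derivation of $\AND \overline{A_\phi}, \AND E_\phi, \TT(\phi) \Proves \phi$. Under the hypothesis $\TT(\phi)$, the sentence $\TT(\phi)$ is itself a true $\mathbf{\Sigma}(L_{\omega\omega})$ sentence, so I append it to the observation set, yielding a derivation that witnesses $\AA_\tau(\phi)$.

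Clauses (12) and (13) are where the validity of $(\tau+\sigma)^{(\Omega)}$ is needed. The backward direction of (12) is (9). For the forward direction, the preceding proposition converts the assumption $\AA_\tau(\phi)$ into a proof of $\Truth \Yields \phi$ in $(\tau+\sigma)^{(\Omega)}$, and validity of that theory applied to this proof delivers $\TT(\phi)$. Clause (13) is parallel: from $\AA_\tau(\phi \Yields \psi)$ the same proposition produces a proof of $\phi \Yields \psi$ in $(\tau+\sigma)^{(\Omega)}$, and validity propagates $\TT(\phi)$ to $\TT(\psi)$. The main obstacle, conceptually, is coordinating three distinct levels --- assertibility, provability in the iterated extension, and truth --- but the preceding proposition already collapses the gap between the first two, so the remaining work is essentially bookkeeping; the one genuinely delicate step is the left-to-right direction of (3), where cut-elimination together with countability must jointly justify infinitary inversion over a set-indexed conjunction.
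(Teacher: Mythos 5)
There is a genuine gap in your treatment of the left-to-right directions of (2), (4), and (5). You propose to apply cut-elimination and then ``invert on the outermost connective,'' but in $\lL\iI_{\infty\omega}$ the right rules for $\Or$, $\OR$, and $\exists$ are not invertible, and the disjunction and existence properties fail unless the antecedent is essentially Harrop. Here the antecedent is $\AND \overline A, \AND E$ where $E$ is an arbitrary set of true $\mathbf \Sigma(L_{\infty \omega})$ observations, which may contain disjunctions and existentials (for instance content axioms $\forall x \in \underline b\: \OR_{a \in b} x = \underline a$); a cut-free derivation of $\AND \overline A, \AND E \Proves \phi \Or \psi$ can branch at left disjunction or left implication rules and introduce \emph{different} disjuncts (or different existential witnesses) on different branches, so no single disjunct or witness can be extracted. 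Countability does not repair this: it is not an issue of infinitary inversion but of the failure of the disjunction/existence property over non-Harrop hypotheses. You have also misplaced the delicacy: (1), (3), and (6) are the easy clauses (conjunctive right rules are invertible, and for (6) the needed memberships $\underline a \in \underline b$ are true atomic observations), while (2), (4), (5) are where the work lies.

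The paper's proof handles these clauses by internalization rather than inversion. It isolates the ``trunk'' of the cut-free derivation (the terminal segment of sequents with succedent $\phi \Or \psi$), and above each initial trunk sequent it replaces the subderivation by one whose succedent is $\AA_\tau(\phi) \Or \AA_\tau(\psi)$; this is possible because ``there is a derivation of $\AND \overline{\Delta_A}, \AND \Delta_E \Proves \psi$'' and ``$\overline{\Delta_A}$ is a set of closures of axioms of $\tau$'' are themselves true $\mathbf \Sigma(L_{\infty \omega})$ sentences that may be adjoined as observations. Propagating through the trunk shows that the $\mathbf \Sigma(L_{\omega \omega})$ sentence $\AA_\tau(\phi) \Or \AA_\tau(\psi)$ is itself \emph{assertible}; the earlier proposition then makes $\Truth \Yields \AA_\tau(\phi) \Or \AA_\tau(\psi)$ a theorem of $(\tau + \sigma)^{(\Omega)}$, and the validity hypothesis finally yields the disjunction itself (similarly for (4) and (5), with extra care about terms and free variables in the existential case). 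So, contrary to your accounting, the validity of $(\tau+\sigma)^{(\Omega)}$ enters essentially in (2), (4), (5) — whereas (9) needs no validity at all, since a true $\phi$ may serve as its own observation. For $\phi, \psi$ restricted to $\mathbf \Sigma$ sentences one could shortcut via (12) (assertible implies provable implies true, split the truth, re-assert by (9)), but that route also uses validity and is unavailable for general $\mathbf I(L_{\infty \omega})$ sentences, where the truth predicate does not apply; your inversion argument cannot be patched without something like the paper's internalization step.
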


\begin{proof}
Property (1) is a simple consequence of rules for conjunction, as is property (3).

Property (2) is a consequence of cut-elimination. Fix a cut-free derivation of $\AND \overline A_0, \AND E_0, \Proves \phi \Or \psi$, for some set $A_0$ of axioms of $\tau$, and some set $E_0$ of true $\mathbf \Sigma(L_{\infty \omega})$ formulas. We will rework this derivation into a derivation of a sequent $\AND \overline{A_1}, \AND E_1 \Proves  \AA_\tau(\phi) \Or \AA_\tau(\psi)$, for some larger set $A_1 \supseteq A_0$ of axioms of $\tau$, and some larger set $E_1 \supseteq E_0$ of true $\mathbf \Sigma(L_{\infty \omega})$ sentences. Thus, we will conclude $\AA_\tau(\AA_\tau (\phi) \Or \AA_\tau(\psi))$. By corollary, the conditional $\Truth \Yields \AA_\tau (\phi) \Or \AA_\tau(\psi)$ is provable from $(\tau + \sigma)^{(\Omega)}$. By the validity of $(\tau+\sigma)^{(\Omega)}$, we will conclude $\AA_\tau(\phi) \Or \AA_\tau(\psi)$.

We distinguish the ``trunk'' of the fixed cut-free derivation of $\AND \overline A_0, \AND E_0, \Proves \phi \Or \psi$ as the final part of the proof tree that consists of sequents of the form $\Delta \Proves \phi \Or \psi$; we do not include sequents of this form that occur before sequents not of this form. We completely replace each subderivation of the fixed derivation that ends in some initial sequent of the trunk. An initial sequent of the trunk is of the form $\Delta \Proves \phi \Or \psi$ with each formula in $\Delta$ a substitution instance of a subformula of $\AND \overline A_0$ or of $\AND \overline E_0$. Thus, each formula in $\Delta$ is a partially closed axiom of $\tau$ or a conjunction of closed axioms of $\tau$ (we say that it is in $\Delta_A$), or it is a $\mathbf \Sigma(L_{\infty \omega})$ formula (we say that it is in $\Delta_E$). In this exposition, we assume that $\Delta$ does not contain a conjunction of closed axioms of $\tau$; in the case that $\Delta$ does contain a conjunction of closed axioms of $\tau$ the argument will need to be adjusted in a few inconsequential and messy ways.

Fix an initial sequent $\Delta \Proves \phi \Or \psi$ of the trunk. Since none of the immediately preceding sequents in the fixed cut-free derivation have $\phi \Or \psi$ in the succedent, it follows that that this initial sequent is produced by the right disjunction rule or the right weakening rule.  In the former case, we have a derivation of $\Delta \Proves \phi$ or of $\Delta \Proves \psi$, and in the latter case we have a derivation of both. Without loss of generality, we suppose that we have a derivation of $\Delta \Proves \psi$. We continue this subderivation, closing the axioms of $\tau$, and combining the closed axioms of $\tau$, and the $\mathbf \Sigma (L_{\infty \omega})$ formulas, into single formulas using the left conjunction rule, to obtain a derivation that looks quite like a witness to the assertibility of $\psi$, except that the $\mathbf \Sigma(L_{\infty \omega})$ conjunction may fail to be a sentence, and even if it is a sentence, it is not known to be true:
$$ \AND \overline{\Delta_A}, \AND \Delta_E(x_1, \ldots, x_n) \Proves \psi$$
We may formalize an argument that ``if there is a derivation of the sequent $ \AND \overline{\Delta_A}, \AND \Delta_E(x_1, \ldots, x_n) \Proves \psi$, and $\overline \Delta_A$ is a set of closures of axioms of $\tau$, and $\AND \Delta_E(x_1, \ldots, x_n)$, then the sentence $\psi$ is assertible''. The sentence ``there is a derivation of the sequent $ \AND \overline{\Delta_A}, \AND \Delta_E(x_1, \ldots, x_n) \Proves \psi$'' is a true $\mathbf \Sigma(L_{\infty \omega})$ sentence. The sentence ``$\overline \Delta_A$ is a set of closures of axioms of $\tau$'' is also a true $\mathbf \Sigma(L_{\infty \omega})$ sentence. Thus, there is a set of true $\mathbf \Sigma(L_{\infty \omega})$ sentences $E'$ and a set $A'$ of axioms of $\tau$, and derivations of the following sequents 
$$\AND \overline{A'}, \AND E',  \Delta_E(x_1, \ldots, x_n) \Proves \AA_\tau(\psi)$$
$$\AND \overline{A'}, \AND E',  \Delta_A,  \Delta_E(x_1, \ldots, x_n) \Proves \AA_\tau(\phi)\Or \AA_\tau(\psi)$$
Thus, for the given initial sequent $\Delta_A, \Delta_E(x_1, \ldots, x_n) \proves \phi \Or \psi$, we have obtained a derivation of the above sequent.

We now rework the fixed cut-free derivation of $\AND \overline A , \AND E \Proves \phi \Or \psi$ in the following way: we replace each derivation of an initial sequent of the trunk in the way described above; we replace each sequent $\Delta \Proves \phi \Or \psi$ of the trunk with the sequent $\Delta \Proves \AA_\tau(\phi) \Or \AA_\tau(\psi)$; we finally propagate the new assumptions in the initial sequents of the trunk to the conclusion, enlarging the conjunctions as we do so. Every step of the derivation above an initial segment of the trunk is according to $\lL\iI_{\infty \omega}$ because it is part of a derivation. Every other step whose output sequent is not in the trunk is according to $\lL\iI_{\infty \omega}$, because such a step has not been altered. Every step in the trunk is according to $\lL\iI_{\infty \omega}$, because the rules applied here in the fixed cut-free derivation are all left rules, and we have not introduces any free variables. The only non-initial steps in the trunk that have an input sequent not in the trunk apply the left implication rule; such implications are still valid because neither the new antecednent formulas nor the new succedent $\AA_\tau(\phi) \Or \AA_\tau(\psi)$ play a role. Thus, we obtain a derivation of $\AND \overline A_1, \AND  E_1 \Proves \AA_\tau(\phi) \Or \AA_\tau(\psi)$. The set $A_1$ consists of the axioms in $A_0$, together with the new axioms added at initial sequents of the trunk; likewise, $E_1$ consists of the axioms of $E_0$, together with the new true $\mathbf \Sigma(L_{\infty \omega})$ formulas added at initial sequents of the trunk. We conclude that $\AA_\tau(\phi) \Or \AA_\tau(\psi)$ is assertible, and as previously argued, that $\AA_\tau(\phi)$ or $\AA_\tau(\psi)$.

The above argument generalizes more or less directly to property (4). The argument for property (5) involves more changes, particularly when the vocabulary includes function symbols. We fix a cut-free derivation of $\AND \overline A_0, \AND E_0 \Proves \exists y\: \psi(y)$, and, as expected, we define the trunk to be the terminal part of this derivation that consists of sequents of the form $\Delta \Proves \exists y\: \psi(y)$. Each initial sequent of the trunk is produced by the right weakening rule or the right existential quantifier rule; in the later case, the preceding sequent is $\Delta \Proves \psi(t)$ for some term $t$ that may not be closed. Explicitly listing any free variables, we have a derivation of
$$\AND \overline \Delta_A, \AND \Delta_E(x_1, \ldots, x_n) \Proves \psi(t(x_1, \ldots, x_n))$$
We may formalize the argument that ``if $y = t(x_1, \ldots, x_n)$, and $\psi(t(x_1 ,\ldots, x_n))$, then $\psi(y)$''. Thus, there is a set of axioms $A''$ of $\tau$ so that we have a derivation of
$$\AND \overline{\Delta_A \union A''}, \AND \Delta_E (x_1, \ldots, x_n) \union \{y = t(x_1, \ldots x_n)\} \Proves \psi(y).$$
We may formalize an argument that ``if there is a derivation of $\AND \overline{\Delta_A \union A''}, \AND \Delta_E (x_1, \ldots, x_n) \union \{y = t(x_1, \ldots x_n)\} \Proves \psi(y)$, and $\overline \Delta_A$ is a set of closures of axioms of $\tau$, and $A''$ is a set of axioms of $\tau$, and $\AND \Delta_E (x_1, \ldots, x_n)$, and $y = t(x_1, \ldots, x_n)$, then the sentence $\psi(\underline y)$ is assertible''. We may also formalize an argument that ``there is an object $y$ such that $y = t(x_1, \ldots, x_n)$''.
Thus, there is a set of true $\mathbf \Sigma(L_{\infty \omega})$ sentences $E'$ and a set $A'$ of axioms of $\tau$, and derivations of the following sequents
$$\AND \overline{A'}, \AND E',  \Delta_E(x_1, \ldots, x_n), y = t(x_1, \ldots, x_n) \Proves \AA_\tau(\psi(\underline y)))$$
$$\AND \overline{A'}, \AND E',  \Delta_E(x_1, \ldots, x_n), \exists y\: y = t(x_1, \ldots, x_n) \Proves \exists y\: \AA_\tau(\psi(\underline y)))$$
$$\AND \overline{A'}, \AND E',  \Delta_E(x_1, \ldots, x_n) \Proves \exists y\: \AA_\tau(\psi(\underline y)))$$
The rest of the argument follows the pattern of the disjunctive cases.

For property (6), assume that $\forall v\in \underline b \: \psi$ is assertible from $\tau$. If $a$ is an element of $b$, then $\psi^v_{\underline a}$ is assertible; we simply extend the derivation by adding $\underline a \in \underline b$ to the antecedent, and instantiating. Conversely, assume that for each element $a$ in $b$, the formula $\psi^v_{\underline a}$ is assertible from $\tau$. We combine the given derivations into a derivation whose consequent is $\AND_{a \in b} \psi^v_{\underline a}$; we conclude that $\forall v \in \underline b\: \psi$ is assertible, by adding to the antecedent the true $\mathbf \Sigma(L_{\infty \omega})$ sentences listing the elements of $b$.

Properties (7) and (8) follow simply from the rules of $\lL \iI_{\infty \omega}$. Properties (9), (10), and (11), follow from the definition of assertibility. For property (12), assume $\AA_\tau(\phi)$; it follows that the conditional $\Truth \Yields \phi$ is a theorem of $(\tau+ \sigma)^{(\Omega)}$, so by the validity of $(\tau + \sigma)^{(\Omega)}$, we conclude that $\phi$ is true. Property (13) follows by combining property (7) and property (12).
\end{proof}

\section{Philosophical remarks}\label{section: remarks}

The conservative response to objections that a formula with free variables implicitly expresses a universal proposition if it is meaningful at all is to interpret the free variables as syntactic variables. If each free variable in a positivistic proof is replaced with a constant symbol, the proof remains valid at each step. The axioms of a theory are then also interpreted schematically; we must include all closed instances of deep inference. 

The drawback of this conservative interpretation is that it undermines the impact of theorems that include free variables. Such a theorem may be interesting because its closed instances are \emph{uniformly} provable. Each instance of its proof is positivistically acceptable, but this observation is not itself expressible as an inference under the conservative interpretation. Indeed, each instance of the observation itself is provable, but what is interesting is that these instances are uniformly provable; and we have come full circle.

This difficulty is largely an artifact of our formalization of positivistic reasoning within the framework of first-order logic. Procedures, not formulas, are the elementary constituents of a mathematical universe in the positivistic approach. We imagine a register machine whose register contents may change over the course of a machine procedure, or indeed, over the course of an argument. At each step of the argument, we predict that a certain procedure will halt \emph{after} the sequence of preceding procedures, \emph{on the basis} that the sequence of preceding procedures halts.

We may analyze the operation of the register machine modally. Each machine procedure is a binary relation on the space of machine states. We interpret each positivistic formula $\phi(\overline x)$ as the procedure that halts on its initial state if $\phi(\overline x)$ can be verified for the register contents of that state, and that otherwise does not halt. Ignoring deep inference, we might then interpret each conditional $\phi(\overline x) \Yields \psi(\overline x)$ as the rule of inference the predicts $\psi(\overline x)$ after $\phi(\overline x)$. However, our reasoning may also include a procedure that may halt on a state other than its initial state.

Here the discussion splits into two threads. First, we discuss the extent to which universal quantification is implicit in this dynamic reasoning. Second, we formalize this dynamic reasoning in a way that incorporates deep inference and mathematical practice.

Recall that existential quantification on $x$ is defined in terms of a procedure that is presumed to change the contents of register $x$ arbitrarily. This presumed behavior is expressed by the logical axiom $\phi(t) \Yields \exists x\: \phi(x)$. If our argument assumes $\phi(x)$, and infers $\psi(x)$, then $x$ is essentially a schematic variable that denotes the object in register $x$, which remains unchanged over the course of the argument. In contrast, if our argument assumes the existential quantifier operation for $x$ followed by $\phi(x)$, and infers $\psi(x)$, then $x$ is essentially a universally quantified variable, since we have inferred $\psi(x)$ about an arbitrarily chosen object that satisfies $\phi(x)$. That our inference carries a universal sense does not contradict the positivistic rejection of universal quantification over potentialist totalities because it is an inference, not a proposition. We are simply inferring $\psi$ from $\phi$ for an object that was chosen arbitrarily, i. e., produced by a specific procedure.

We may summarize this point by saying that though the positivistic approach excludes the possibility of a procedure that checks whether a given predicate holds for every object in the universe, it \emph{requires} a procedure that produces arbitrary objects. It is the procedure that axiomatically produces arbitrary objects: to infer $\phi(x)$ for arbitrary $x$, is to infer $\phi(x)$ for the object produced by this procedure, and to infer that this procedure produces arbitrary objects is to infer that this procedure will produce an object equal to the object produced by this procedure. Our axioms for the existential quantifier express principles appropriate to this intuition.

In particular, the correctness principle of an intentional theory is the inference that if an axiom of that theory was produced arbitrarily, i. e., by the existential quantifier procedure, and the assumption of that axiom is true, then the conclusion of that axiom is true. Corollary \ref{block: correctness} shows that there are strong theories that prove this inference about themselves. The substitution principle that we can replace an arbitrary object in an argument with a constant symbol is a provable theorem; it also is formulated in terms of the existential quantifier procedure: if an arbitrary free variable is replaced with an arbitrary constant symbol in an arbitrary proof, then the result is also a proof. Thus, we have a device for formulating universal principles without admitting universal propositions.

The major subtlety of dynamic reasoning is the treatment of state-preserving machine procedures such as formulas. For example, an argument might assume $\phi$, then infer $\chi$, and then infer $\psi$, leading us to claim that assuming $\phi$, we can infer $\psi$. However, this summary presumes that the procedure $\chi$ does not alter the machine state, a presumption that is not entirely trivial. After all, the verification of $\chi$ generally involves temporarily storing data in various registers. We may address this difficulty by allowing hypothetical reasoning, which replaces a terminal part of the sequence of procedures in a deduction, rather than simply adding to it. We may also simply adopt the convention that the occurrence of intermediate formulas is implicit in the statement of a theorem. In either case the direct formalization of the deductive system is somewhat alien to mathematical practice.

Instead, we formalize dynamic reasoning in a way that synthesizes aspects of potentialism, of deep inference, and of mathematical practice. Any true $\Sigma$ formula is true in some initial segment of the universe, in which classical reasoning is positivistically valid. So, we take the $\Sigma$ reflection principle as a basic rule of inference, and we relativize the standard Hilbert system for classical deduction to transitive sets. We similarly relativize the axioms of the given positivistic theory to transitive sets. Finally, we formalize the dynamic aspect of the reasoning using the familiar phrase ``let $b_1$ be...", to allow the introduction of constant symbols after the deduction of an existential sentence.

To simplify the ensuing discussion, we assume that the signature includes no function symbols of positive arity; we allow the negation connective $\neg$ to occur in $\Sigma$ formulas, provided that it does not occur above unbounded quantifiers; and we express the negated predicate symbols $\not \in$ and $\neq$ using this connective. We recall that when $\phi(x_1, \ldots, x_n)$ is a classical first-order formula, its relativization $\phi^W(x_1, \ldots, x_n)$ to a set $W$ is defined to be the result of replacing each existential quantifier $\exists v \:$ by the bounded quantifier $\exists v \in W\:$, and likewise for universal quantifiers. By convention, even bounded quantifiers are replaced in this way, so that $\forall y \in x \: \phi$ becomes $\forall y \in W\: y \not \in x \Or \phi$. Our notation for the relative universal closure $\overline{ \phi (x_1, \ldots, x_n)}^W$ may be read directly; it abbreviates the formula $\forall x_1 \in W\: \cdots \forall x_n \in W\: \phi^W(x_1, \ldots, x_n)$.

\begin{definition}
Let $T$ be an extensional positivistic theory, i. e., a set of positivistic conditionals. The deductions of \underline{the system $DT$} consist of $\Sigma$ sentences. For the rules of inference of $DT$ listed below, $\phi$, $\chi$, and $\psi$ denote arbitrary $\Sigma$ formulas, and $c_1, \ldots, c_n$ denotes a nonempty list of constant symbols that includes those of $\phi$ and $\chi$. The notation $\Trans(X, \overline c)$ abbreviates the formula
$(c_1 \in X) \And(c_2 \in X) \And \cdots \And (c_n \in X) \And (\forall w \in X \: \forall v \in w\: v \in X)$.

\begin{enumerate}
\item \textbf{Nonlogical axioms}. If $\phi(x_1, \ldots, x_m) \Yields \psi(x_1, \ldots, x_m)$ is an axiom of $T$, from $\Trans(C, \overline c)$, deduce $\forall x_1 \in C\: \ldots \forall x_m \in C\: \neg \phi^C(x_1, \ldots, x_m) \Or \psi(x_1, \ldots, x_m)$.
\item \textbf{Logical axioms}. If $\phi$ is an axiom of the standard Hilbert system for classical first-order logic with equality, from $\Trans(C, \overline c)$, deduce $\overline{\phi}^C$.
\item \textbf{Modus ponens}. From $\Trans(C, \overline c )$, and $\overline{\phi}^C$, and $\overline{\neg \phi \Or \chi}^{C}$, deduce $\overline{\chi}^C$.
\item \textbf{Downward reflection.} From $\phi$, deduce $\exists X\: \Trans(X, \overline c) \And \phi^X$.
\item \textbf{Upward reflection.} From $\exists X\: \Trans(X, \overline c) \And \phi^X$, deduce $\phi$.
\item \textbf{Conjunction introduction.} From $\phi$ and $\chi$, deduce $\phi \And \chi$.
\item \textbf{Conjunction elimination.} From $\phi \And \chi$ or $\chi \And \phi$, deduce $\phi$.
\item \textbf{Universal instantiation.} From $\Trans(C, \overline c)$ and $\overline{\phi}^C$, deduce $\overline{\phi(x/c_1)}^C$.
\item \textbf{Existential generalization.} From $\phi(X/C)$, deduce $\exists X \: \phi$.
\item \textbf{Existential instantiation.}  From $\exists X \: \phi$, deduce $\phi(X/B)$, where $B$ is a new constant symbol, i. e., a constant symbol that is not in the signature and that does not occur previously in the deduction.
\end{enumerate}
A $\Sigma$ sentence $\psi$ is deducible just in case it has a deduction from the $\Sigma$ formula $\Truth$. A $\Sigma$ conditional $\phi(y_1, \ldots, y_n) \Yields \psi(y_1, \ldots, y_n)$ is deducible just in case for new constant symbols $b_1, \ldots, b_n$, the formula $\psi(b_1, \ldots, b_n)$ can be deducible from $\phi(b_1, \ldots, b_n)$.
\end{definition}

Following standard mathematical practice, we gloss the introduction of a new constant symbol using the word ``let''. Mechanically, this introduction refers to an execution of the existential quantifier procedure, whose output persists through the proof as the value of the new constant symbol. Explicitly, in an application of the existential instantiation inference rule, after we check $\exists X\: \phi$, we infer that the procedure that looks for such an object and stores it in register $B$ halts, and then we infer $\phi(X/B)$, so that we can later apply inferences that assume $\phi(X/B)$. We gloss an initial assumption $\phi(b_1, b_2, \ldots, b_m)$ in the deduction of a $\Sigma$ conditional $\phi(\overline x) \Yields \psi(\overline x)$ in just the same way; thus, such an initial assumption technically corresponds to a finite sequence of procedures, rather than a single procedure.

\begin{theorem}[$\ZFC$]\label{block: DT}
Assume that $\chi\Yields \chi'$ is provable from a positivistic theory $T$. Then, this conditional is deducible in the system $DT$ just defined.
\end{theorem}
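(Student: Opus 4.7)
The plan is to induct on the length $n$ of the positivistic proof $(\chi_0, \chi_1, \ldots, \chi_n)$, where $\chi_0 = \chi$ and $\chi_n = \chi'$, and to construct a $DT$-deduction whose successive lines witness the $\chi_i$. By the definition of deducibility of a conditional in $DT$, I first introduce new constant symbols $b_1, \ldots, b_m$ for the free variables of $\chi$ and $\chi'$ (via the ``let'' gloss that legitimates the initial assumption $\chi_0(b_1, \ldots, b_m)$), and then aim to deduce $\chi_n(b_1, \ldots, b_m)$. The base case is given by this initial assumption; everything happens in the inductive step.

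For the inductive step, suppose $\chi_i(\overline b)$ has already been deduced, and that $(\chi_i, \chi_{i+1})$ is an application of some axiom $\phi(x_1, \ldots, x_k) \Yields \psi(x_1, \ldots, x_k)$ of $T$, obtained by replacing a subformula occurrence of the form $\phi(t_1, \ldots, t_k)$ inside $\chi_i$ by $\psi(t_1, \ldots, t_k)$, where the terms $t_j$ may contain variables bound above the replacement site. I first apply downward reflection to $\chi_i(\overline b)$, obtaining $\exists X\: \Trans(X, \overline b) \And \chi_i^X(\overline b)$, and then use existential instantiation to introduce a fresh constant $C$ with $\Trans(C, \overline b)$ and $\chi_i^C(\overline b)$. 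The nonlogical axiom rule, applied to $\Trans(C, \overline b)$, supplies $\forall x_1 \in C\: \ldots \forall x_k \in C\: \neg \phi^C(\overline x) \Or \psi^C(\overline x)$. Inside $C$ we are now free to reason in the classical Hilbert system via the logical axiom and modus ponens rules; the central fact to invoke there is the monotonicity schema
\[
\overline{\bigl(\forall \overline x\: (\neg \phi(\overline x) \Or \psi(\overline x))\bigr) \Implies \bigl(\Xi[\phi(\overline t)] \Implies \Xi[\psi(\overline t)]\bigr)}^{C},
\]
valid for every positive context $\Xi[\cdot]$, i.e.\ a formula-with-hole built only from atomic formulas (including the negated predicate symbols), $\And$, $\Or$, bounded $\forall$, and unbounded $\exists$. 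This schema is a theorem scheme of classical first-order logic with equality, provable by a routine induction on $\Xi$ that I would not spell out; when $\chi_i = \Xi[\phi(\overline t)]$ and $\chi_{i+1} = \Xi[\psi(\overline t)]$, relativization to $C$ preserves the positive-context structure, so the schema applies to $\Xi^C$. Two applications of modus ponens then deliver $\chi_{i+1}^C(\overline b)$, and upward reflection produces $\chi_{i+1}(\overline b)$ in the outer universe, completing the inductive step.

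The main obstacle is the classical monotonicity schema and, in particular, the careful handling of variables captured by the surrounding context. The terms $t_1, \ldots, t_k$ in a deep-inference step may well contain variables that are bound by quantifiers sitting above the substitution site in $\Xi$; after substitution those same binders must apply to $\psi(\overline t)$. Verifying that this behaves correctly under relativization (so that a bounded quantifier $\forall v \in s$ appearing in $\Xi$ becomes $\forall v \in C\: v \not\in s \Or \cdots$, but still constitutes a positive context around the hole) is the bookkeeping heart of the proof, and it is what makes the reduction of positivistic deep inference to a classical system palatable: positivistic subformula replacement is nothing more, once everything is relativized, than the propagation of a universally quantified classical implication through a positive context.
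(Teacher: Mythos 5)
There is a genuine gap, and it sits exactly at the point your proof leans on hardest. You quote the nonlogical axioms rule of $DT$ as yielding $\forall x_1 \in C\: \cdots \forall x_k \in C\: \neg \phi^C(\overline x) \Or \psi^C(\overline x)$, but the rule actually yields $\forall x_1 \in C\: \cdots \forall x_k \in C\: \neg \phi^C(\overline x) \Or \psi(\overline x)$, with the consequent $\psi$ \emph{unrelativized}. This asymmetry is not incidental: $\psi$ is a $\mathbf\Sigma$ formula whose existential witnesses need not lie in $C$, and the set $C$ you obtain by downward reflection only reflects the particular sentence $\chi_i(\overline b)$ — it is not closed under the ``Skolem functions'' of the axiom $\phi \Yields \psi$. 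Consequently the relativized implication you invoke is not a rule of $DT$, and the sentence $\chi_{i+1}^C(\overline b)$ that your two applications of modus ponens are supposed to deliver can simply be false for that $C$ (already for an axiom like pairing, the required witness may lie outside $C$). So the step ``reason entirely inside one transitive set $C$ and reflect up'' does not go through: the formula the rule actually gives you mixes quantifiers bounded by $C$ with unbounded existentials, and the modus ponens rule of $DT$ only operates on formulas fully relativized to a single transitive set.

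The paper repairs exactly this with a second reflection, which is the idea missing from your argument: after obtaining $B$ with $\chi(\overline b)^B$ and deriving $\forall \overline x \in B\: \neg\phi^B(\overline x) \Or \psi(\overline x)$ by the nonlogical axioms rule, it conjoins this with $\chi(\overline b)^B$ and $\Trans(B, \overline b, \overline c)$ — a $\mathbf\Sigma$ sentence — and applies downward reflection \emph{again} to get a larger transitive $B'$ in which the whole conjunction holds relativized; only now does the consequent become bounded, as $\psi^{B'}$, and the classical Hilbert reasoning can be carried out inside $B'$. Note also that the monotonicity argument there is slightly more than the schema you state: one needs $\Xi^B[\phi^B(\overline t)]$ together with $\forall \overline x \in B\:(\neg\phi^B \Or \psi^{B'})$, transitivity of $B$, and $B \subseteq B'$ to conclude $\Xi^{B'}[\psi^{B'}(\overline t)]$, i.e.\ monotonicity in the hole combined with upward persistence of the positive context from $B$ to $B'$. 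Your induction on proof length and the reduction to a single deep-inference step are fine (the paper does the same implicitly), but you should also cover steps that apply a logical axiom of positivistic logic, which the nonlogical axioms rule does not reach; the paper handles these separately by deriving $\neg\chi \Or \chi'$ in the Hilbert system, relativizing, and reflecting up.
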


\begin{proof}
It's sufficient to establish this theorem when $\chi \Yields \chi'$ is an instance of deep inference of some axiom of $T$, or some axiom of positivistic logic.

Assume that $\chi(\overline y) \Yields \chi'(\overline y)$ is an instance of deep inference of an axiom of positivistic logic, so it is a logical validity. It follows that $\neg \chi(\overline y) \Or \chi'(\overline y)$ is derivable in the standard Hilbert system. We now construct the desired deduction as follows: Assume  $\chi(\overline b)$. Let $B$ be a transitive set that contains the constants of $\chi$, $\chi'$, and $\overline b$, such that $\chi(\overline b)^B$. Derive $(\neg \chi(\overline b) \Or \chi'(\overline b))^B$. Apply modus ponens to derive $\chi'(\overline b)^B$. Conclude $\chi'(\overline b)$ by upward reflection.

Assume that $\chi(\overline y) \Yields \chi'(\overline y)$ is an instance of deep inference of $\phi(\overline x) \Yields \psi(\overline x)$, an axiom of $T$. Write $\overline c$ for the constant symbols of $\chi$ and $\chi'$. We now construct the desired deduction as follows: Assume $\chi(\overline b)$. Let $B$ be a transitive set that contains $\overline b$ and $\overline c$, such that $\chi(\overline b)^B$. The constant symbols of $\phi$ must be among the constant symbols of $\chi$, so derive $\forall x_1 \in B\: \cdots \forall x_n \in B\: \neg \phi^B(x_1, \ldots, x_m) \Or \psi(x_1, \ldots, x_m)$ by the nonlogical axioms rule. Applying conjunction introduction and downward reflection, obtain a transitive set $B'$ that contains $B$, $\overline b$, and $\overline c$, such that
$$ \left(\chi(\overline b)^B \And \Trans(B, \overline b, \overline c) \And \forall x_1 \in B\: \cdots \forall x_n \in B\: \neg \phi^B(\overline x) \Or \psi(\overline x)\right)^{B'}.$$
Since deep inference is logically valid, reasoning in the Hilbert system inside $B'$, derive $\chi'(b_1, \ldots, b_n)^{B'}$. Conclude $\chi'(b_1, \ldots, b_n)$ by upward reflection.
\end{proof}

\begin{corollary}[$\ZFC$]
Assume that $\chi(\overline y) \Yields \chi'(\overline y)$ is provable from a positivistic theory $T$. Let $\overline c$ be a sequence of constants symbols that includes those of $\chi$. Then, the $\Sigma$ formula $$\forall y_1 \in C\: \cdots \forall y_n \in C\: \neg \chi^{C}(y_1, \ldots, y_n) \Or \chi'(y_1, \ldots, y_n)$$ is deducible from $\Trans(C, \overline c)$ in the system $DT$, for any new constant symbol $C$.
\end{corollary}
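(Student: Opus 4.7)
My plan is to reduce the corollary to Theorem~\ref{block: DT}. Treating $C$ as a free variable in place of the new constant symbol, I will show that $T$ positivistically proves the enlarged conditional
\[
\Trans(C, \overline c) \Yields \forall y_1 \in C\: \cdots \forall y_n \in C\: \neg \chi^{C}(\overline y) \Or \chi'(\overline y).
\]
Once this conditional is established as a theorem of $T$, Theorem~\ref{block: DT} applied to it gives, for any new constant symbol $C$, a deduction in $DT$ of the right-hand side from $\Trans(C, \overline c)$, which is exactly the statement of the corollary.

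The key ingredient is the positivistic provability of $\Sigma$-absoluteness for transitive sets. For each fixed $\Sigma$ formula $\chi(\overline y)$, the implication $\Trans(C, \overline c, \overline y) \And \chi^{C}(\overline y) \Yields \chi(\overline y)$ is derivable using only the logical axioms of positivistic logic, by induction on the syntactic complexity of $\chi$. Atomic formulas are unchanged under relativization. An unbounded existential $\exists z\: \phi$ relativizes to a bounded existential $\exists z \in C\: \phi^{C}$, and this weakens to $\exists z\: \phi$ by the inductive hypothesis and the standard existential introduction axiom. A bounded universal $\forall z \in t\: \phi$ with $t$ evaluating in $C$ relativizes to $\forall z \in C\: \neg (z \in t) \Or \phi^{C}$; transitivity of $C$ gives $z \in t \Yields z \in C$, after which induction reduces the body to $\forall z \in t\: \phi$.

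Chaining this absoluteness with the given positivistic proof of $\chi \Yields \chi'$ yields a positivistic proof of $\Trans(C, \overline c) \And y_1 \in C \And \cdots \And y_n \in C \And \chi^{C}(\overline y) \Yields \chi'(\overline y)$, with $C$ and $\overline y$ free. Closing this under the bounded universal quantifiers $\forall y_1 \in C, \ldots, \forall y_n \in C$ using the positivistic axioms that govern bounded universal quantification (the same axioms that, in the proof of completeness, exhibit $\forall x \in s\: \phi$ as the meet of the $\{t \not \in s \Or \phi^{x}_{t}\}$) yields the desired conditional sentence. Theorem~\ref{block: DT} then supplies the required DT-deduction. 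The main delicate point is the bounded universal closure step, which requires some care given positivistic logic's schematic treatment of free variables, but it is justified by the standard axioms for bounded universals used throughout the completeness proof.
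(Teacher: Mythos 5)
Your overall reduction is the same as the paper's: show that $T$ proves the relativized conditional with $C$ treated as a free variable, then invoke Theorem~\ref{block: DT}. Where you diverge is in how that conditional is established: you propose a direct syntactic induction inside the positivistic calculus, whereas the paper argues semantically ($\Sigma$ upward absoluteness over transitive sets is a classical logical consequence of $\Trans(Y,\overline c)$, so the conditional holds in every model of $T$) and then converts semantic validity into a positivistic proof via the completeness result of \cref{block: completeness}. Your syntactic route can be made to work, but it needs more scaffolding than you acknowledge: the induction applies \emph{derived} conditionals inside contexts, so you must first check that deep inference is admissible for theorems and not just axioms (together with substitution and variable renaming, as in the completeness argument); and the step you call ``closing under the bounded universal quantifiers'' is not a bounded-quantifier manipulation at all. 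To pass from $\Trans(C,\overline c)\And y_1\in C\And\cdots\And y_n\in C\And\chi^C(\overline y)\Yields\chi'(\overline y)$ to a disjunction of the form $\cdots\Or\chi'(\overline y)$ under those quantifiers, you must introduce the complementary disjunct by excluded middle for $\Delta_0$ formulas, $\Truth\Yields\chi^C\Or\sim\chi^C$ (itself a separate induction from the complementarity axioms), and then distribute; your proposal never mentions this.

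The genuine gap is the treatment of negation. The formula $\neg\chi^C$ negates a compound $\Delta_0$ formula, and genuine $\neg$ is not in the language of positivistic conditionals: $\mathbf\Sigma(L_{\omega\omega})$ formulas admit negation only in the atoms $\neq$ and $\not\in$. So the ``enlarged conditional'' you claim $T$ proves positivistically is not a well-formed positivistic conditional, and Theorem~\ref{block: DT} cannot be applied to it as stated. The repair is exactly what the paper does: carry out the positivistic step with the De Morgan complement $\sim\chi^C$ (dualizing connectives and predicate symbols) in place of $\neg\chi^C$, apply Theorem~\ref{block: DT} to that conditional, and only afterwards, inside $DT$ --- where $\neg$ is permitted below bounded quantifiers --- convert $\sim\chi^C$ into $\neg\chi^C$ by reasoning in the Hilbert system relativized to a large transitive set. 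Without that final conversion your deduction terminates at the wrong formula.
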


\begin{proof}
The classical first-order formula $\Trans(Y, \overline c)$ implies $\forall y_1 \in Y\: \cdots \forall y_n \in Y\: \neg \chi^Y(y_1, \ldots, y_n) \Or \chi(y_1, \ldots, y_n)$; this follows by induction on the complexity of $\chi$ because $\chi$ is a $\Sigma$ formula. The induction hypothesis affirms the implication for all tuples of variables $\overline y$. As an example, in the bounded universal quantifier case, we assume $\Trans(Y, \overline c)$ and let $y_1, \ldots, y_n \in Y$ be such that $(\forall v \in t\: \chi)^Y$, for some term $t$ that is necessarily either a constant symbol or a variable among $\overline y$. In either case, we have $t \in Y$. The formula $(\forall v \in t\: \chi)^Y$ is equal to $\forall v \in Y\: \neg v \in t \Or \chi^Y$, and since $Y$ is transitive, this formula implies $\forall v \in t\: \chi^Y$. By the induction hypothesis, $\Trans(Y, \overline c)$ implies $\forall y_1 \in Y \: \ldots \forall y_n \in Y\: \forall v \in Y\: \neg \chi^Y \Or \chi$, so we conclude $\forall v \in t\: \chi$.

Thus, the theory $T$ logically implies the conditional
$$ \Trans (Y, \overline c) \Yields \forall y_1 \in Y\: \cdots \forall y_n \in Y\: \sim \chi^Y(y_1, \ldots, y_n) \Or \chi'(y_1, \ldots, y_n),$$
where the notation $\sim$ abbreviates taking the logical complement of a $\Delta_0$ formula.
By the completeness theorem, the theory $T$ proves this conditional; and by \cref{block: DT}, the system $DT$ derives $\forall y_1 \in C\: \cdots \forall y_n \in C\: \sim \chi^C(y_1, \ldots, y_n) \Or \chi'(y_1, \ldots, y_n)$ from $\Trans (C, \overline c)$, for any new constant symbol $C$. We may replace the notation $\sim$ with the genuine negation connective $\neg$ reasoning within the Hilbert system relativized to a large transitive set.
\end{proof}

\textbf{Summary.} The admissibility of free variables in positivistic proof derives from two aspects of positivistic reasoning. First, a positivistic inference expresses that the consequent procedure will halt after the antecedent procedure, if the antecedent procedure does halt, so data can be passed from one procedure to the next. Second, procedures are innately nondeterministc: a priori any procedure may execute and halt in any number of ways. There is a procedure $E$ that is presumed to produce arbitrary objects; the principle that $\phi$ is true of each object may be expressed by the inference ``after $E$ produces an object $x$, the procedure $\phi(x)$ will halt''.

A theory that proves its own correctness principle, such as in \cref{block: correctness}, is self-affirming in the sense that it proves that if an object is produced by the procedure $E$, and that object is the application of an axiom whose antecedent is true, then its consequent is also true. Such a theory may be of arbitrarily high consistency strength; there is no contradiction with G\"odel's second incompleteness theorem because the full induction principle is not positivistically expressible.

\begin{bibdiv}
\begin{biblist}

\bib{Barwise}{book}{
author={J. Barwise}
title={Admissible Sets and Structures}
subtitle={An Approach to Definability Theory}
date={1970}
publisher={Springer-Verlag}
}

\bib{Beklemishev}{article}{
author={L. Beklemishev}
title={A note on strictly positive logics and word rewriting systems}
date={2015}
eprint={arXiv:1509.00666}
}

\bib{Burgess}{article}{
author={J. Burgess}
title={On the outside looking in: a caution about conservativeness}
journal={Kurt G\"odel: Essays for His Centennial, Lecture Notes in Logic}
date={2010}
}

\bib{Feferman}{article}{
author={S. Feferman}
title={Lectures on Proof Theory}
journal={Proceedings of the Summer School in logic Leeds, 1967, Lecture Notes in Mathematics}
volume={70}
publisher={Springer}
date={1968}
}

\bib{Feferman09}{article}{
author={S. Feferman}
title={Operational set theory and small large cardinals}
journal={Information and Computation}
volume={207}
issue={10}
date={2009}
publisher={Elsevier}
}

\bib{Hamkins}{article}{
author={J. Hamkins}
title={Every countable model of set theory embeds into its own constructible universe}
date={2013}
journal={Journal of Mathematical Logic}
volume={13}
number={2}
eprint={arXiv:1207.0963}
}

\bib{KnightMontalbanSchweber}{article}{
author={J. Knight}
author={A. Montalb\'an}
author={N. Schweber}
title={Computable structures in generic extensions}
date={2014}
eprint={arXiv:1405.7456}
}

\bib{Pudlak}{article}{
author={P. Pudl\'ak}
title={Improved bounds to the length of proofs of finitistic consistency statements}
journal={Contemporary Mathematics}
volume={65}
date={1987}
}

\bib{Rathjen}{article}{
author={M. Rathjen}
title={A proof-theoretic characterization of the primitive recursive set functions}
journal={Journal of Symbolic Logic}
volume={57}
number={3}
date={1992}
}

\bib{RauszerSabalski}{article}{
author={C. Rauszer}
author={B. Sabalski}
title={Notes on the Rasiowa-Sikorski Lemma}
date={1975}
volume={34}
number={3}
journal={Studia Logica}
}

\bib{Simpson}{book}{
author={S. Simpson}
title={Subsystems of Second Order Arithmetic}
publisher={Cambridge University Press}
date={2009}
}

\bib{Takahashi}{article}{
author={M. Takahashi}
title={A foundation of finite mathematics}
date={1977}
journal={Publications of the Research Institute for Mathematical Sciences, Kyoto University}
volume={12}
date={1977}
}

\bib{Weaver}{book}{
author={N. Weaver}
title={Truth and Assertibility}
publisher={World Scientific}
date={2015}
}

\end{biblist}
\end{bibdiv}

\begin{figure}\label{figure: rule step}

\begin{enumerate}[(1)]\setcounter{enumi}{-3}
\item $\phi \Yields \phi$
\item $\deduce{\phi \INFER \psi}{\phi \INFER \chi & \chi \INFER \psi}$
\item $\infer{\phi^a_t \Yields \psi^a_t}{\phi \Yields \psi}$
\item $\phi \Yields \Truth$
\item $\Falsehood \Yields \psi$
\item $\phi \And \psi \Yields \phi$
\item $\phi \And \psi \Yields \psi$
\item $\deduce{\phi \Yields \chi \And \psi} {\phi \INFER \chi & \phi \INFER \psi}$
\item $\phi \Yields \phi \Or \psi$
\item $\psi \Yields \phi \Or \psi$
\item $\deduce{\phi \Or \chi \INFER \psi}{\phi \INFER \psi & \chi \INFER \psi }$
\item $(\phi \Or \psi) \And \chi \Yields (\phi \And \chi) \Or (\psi \And \chi)$
\item $\phi^x_t \Yields \exists x \: \phi$
\item $\infer{\exists y \: \phi \Yields \psi}{\phi^y_a \Yields \psi}$ \hfill for $a$ not free in $\phi$ or $\psi$
\item $\chi \And \exists y\: \phi \Yields \exists y\: (\chi \And \phi)$
\item $ \forall x \in s \: \phi \Yields t \not \in s \Or \phi^x_a$ 
\item $ \deduce{\psi \Yields \forall y \in s\: \phi}{\psi \Yields a \not \in s \Or \phi^y_a}$ \hfill for $a$ not free in $\phi$ or $\psi$
\item $\forall y \in s \: (\chi \Or \phi) \Yields \chi \Or (\forall y \in s \: \phi)$
\item $\Truth \Yields a \in b \Or a \not \in b$
\item $a \in b \And a \not \in b \Yields \Falsehood$
\item $\Truth \Yields a  = b \Or a \neq b$
\item $a = b \And a \neq b \Yields \Falsehood$
\item $\Truth \Yields a = a$
\item $a = b \Yields b =a$
\item $a=b \And b =c \Yields a= c$
\item $s = t \And \phi^a_t \Yields \phi^a_s$
\end{enumerate}

\caption{a complete calculus of conditionals}
(bound variables distinct from free variables)
\end{figure}

\begin{figure}\label{figure: conditional step}

\begin{enumerate}[(1)]
\item $\phi \Yields \Truth$
\item $\Falsehood \Yields \psi$
\item $\phi \And \psi \Yields \phi$
\item $\phi \And \psi \Yields \psi$
\item $\phi \Yields \phi \And \phi$
\item $\phi \Yields \phi \Or \psi$
\item $\psi \Yields \phi \Or \psi$
\item $\psi \Or \psi \Yields \psi$
\item $(\phi \Or \psi) \And \chi \Yields (\phi \And \chi) \Or (\psi \And \chi)$
\item $\phi^x_t \Yields \exists x \: \phi $
\item $\exists y\: \psi \Yields \psi$ 
\item $\chi \And \exists y\: \phi \Yields \exists y\: (\chi \And \phi)$
\item $ t \in s \And \forall x \in s \: \phi \Yields \phi^x_t$
\item $ \psi \Yields \forall y \in s\: (\psi \And y \in s)$ 
\item $\forall y \in s \: (\chi \Or \phi) \Yields \chi \Or (\forall y \in s \: \phi)$ 
\item $\Truth \Yields s \in t \Or s \not \in y$
\item $s \in t \And s \not \in t \Yields \Falsehood$
\item $\Truth \Yields s = t \Or s \neq t$
\item $s = t \And s \neq t \Yields \Falsehood$
\item $\Truth \Yields t = t$
\item $s = t \Yields t =s$
\item $r=s \And s =t \Yields r = t$
\item $s = t\And \phi^a_s \Yields \phi^a_t$ 
\end{enumerate}

\caption{a complete class of logical axioms for positivistic proof}
(bound variables distinct from free variables)
\end{figure}
\quad

\begin{figure}\label{figure: logical axioms}

\begin{enumerate}[(1)]
\item $\phi \Yields \Truth$
\item $\Falsehood \Yields \psi$
\item $\phi \And \psi \Yields \phi$
\item $\phi \And \psi \Yields \psi$
\item $\phi \Yields \phi \And \phi$
\item $\phi \Yields \phi \Or \psi$
\item $\psi \Yields \phi \Or \psi$
\item $\psi \Or \psi \Yields \psi$
\item $(\phi \Or \psi) \And \chi \Yields (\phi \And \chi) \Or (\psi \And \chi)$
\item $\phi \Yields \exists x \: \phi $
\item $\exists y\: \psi \Yields \psi$ \hfill for $y$ not free in $\psi$
\item $\chi \And \exists y\: \phi \Yields \exists y\: (\chi \And \phi)$ \hfill for $y$ not free in $\chi$
\item $ x \in z \And \forall x \in z \: \phi \Yields \phi$
\item $ \psi \Yields \forall y \in z\: (\psi \And y \in z)$ \hfill for $y$ not free in $\psi$
\item $\forall y \in z \: (\chi \Or \phi) \Yields \chi \Or (\forall y \in z \: \phi)$ \hfill for $y$ not free in $\chi$
\item $\Truth \Yields x \in y \Or x \not \in y$
\item $x \in y \And x \not \in y \Yields \Falsehood$
\item $\Truth \Yields x = y \Or x \neq y$
\item $x = y \And x \neq y \Yields \Falsehood$
\item $\Truth \Yields x = x$
\item $x = y \Yields y =x$
\item $x=y \And y =z \Yields x = z$
\item $x = y\And \phi^z_x \Yields \phi^z_y$ \hfill for $x$ and $y$ substitutable for $z$ in $\phi$
\end{enumerate}

\caption{the logical axioms of positivistic proof}
\end{figure}
\quad

\begin{figure}\label{figure: LK}

\begin{center} Introduction Rules  \end{center}
$
\begin{tabular}{ccc}
\\
\infer{\Falsehood \proves \emptyset}{} & & \infer{\emptyset \Proves \Truth}{}
\\
\\
\\
\infer{\Gamma,  \neg \phi  \Proves \Delta }{ \Gamma \Proves \Delta, \phi} & & \infer{ \Gamma \proves \Delta, \neg \phi}{\Gamma, \phi \Proves \Delta }
\\
\\
\\
\infer{\Gamma, \AND \Kappa \Proves \Delta}{(\exists \phi \in \Kappa) \quad \Gamma , \phi \Proves \Delta} & \qquad\qquad&     \infer{\Gamma \Proves \Delta, \AND \Kappa}{(\forall \phi \in \Kappa) \quad \Gamma \Proves \Delta, \phi} \\ 
\\ \\
\infer{\Gamma, \OR \Kappa \Proves \Delta}{(\forall \phi \in \Kappa) \quad \Gamma , \phi \Proves \Delta} & &       \infer{\Gamma \Proves \Delta, \OR \Kappa}{(\exists \phi \in \Kappa) \quad \Gamma \Proves \Delta, \phi} \\ \\ \\
\infer{ \Gamma, \forall v \: \phi \Proves \Delta}{(\exists t) \quad \Gamma, \phi^v_t \Proves \Delta} & & \infer{\Gamma \Proves \Delta , \forall v\: \phi}{(\exists w \not \in \mathrm{Free}(\Gamma, \Delta)) \quad \Gamma \proves \Delta, \phi^v_w}
\\ \\ \\
\infer{ \Gamma, \exists v \: \phi \Proves \Delta}{(\exists w \not \in \mathrm{Free}(\Gamma, \Delta)) \quad \Gamma, \phi^v_w \Proves \Delta} & & \infer{\Gamma \Proves \Delta , \exists v\: \phi}{(\exists t) \quad \Gamma \proves \Delta, \phi^v_t}
\\ \\ \\
\infer{\Gamma, \phi \Proves \Delta}{ \Gamma \Proves \Delta} & & \infer{\Gamma \Proves \Delta, \phi}{\Gamma \Proves \Delta}
\end{tabular}
$ \\
\quad \\
\begin{center} Structural Rule\end{center} 
\quad\\
$$\infer{\Gamma' \Proves \Delta'}{\Gamma \Proves \Delta}$$
($\Gamma'$ consists of the same formulas as $\Gamma$, and $\Delta'$ consists of the same formulas as $\Delta$) \\
\quad\\

\begin{center} Cut Rule \end{center} 
\quad\\
$$ \infer {\Gamma, \Gamma'  \Proves \Delta , \Delta'}{\Gamma \proves \Delta, \phi & \Gamma', \phi \proves \Delta'}$$

\caption{the system $\lL\kK_{\infty \omega}$}
($\Gamma$, $\Delta$ finite)
\end{figure}

\begin{figure}\label{figure: LI}

\begin{center} Introduction Rules  \end{center}
$
\begin{tabular}{ccc}
\\
\infer{\Falsehood \Proves \emptyset}{\quad} & & \infer{\emptyset \Proves \Truth}{\quad}
\\ \\ \\
\infer{\Gamma, \Gamma', \phi \Yields \psi \Proves \Delta}{ \Gamma \Proves \phi & \Gamma', \psi \Proves \Delta} & & \infer{ \Gamma \proves \phi \Yields \psi}{\Gamma, \phi \Proves  \psi}
\\
\\
\\
\infer{\Gamma, \AND \Kappa \Proves \Delta}{(\exists \phi \in \Kappa) \quad \Gamma , \phi \Proves \Delta} & \qquad\qquad&     \infer{\Gamma \Proves 
\AND \Kappa}{(\forall \phi \in \Kappa) \quad \Gamma \Proves 
\phi} \\ 
\\ \\
\infer{\Gamma, \OR \Kappa \Proves \Delta}{(\forall \phi \in \Kappa) \quad \Gamma , \phi \Proves \Delta} & &       \infer{\Gamma \Proves \OR \Kappa}{(\exists \phi \in \Kappa) \quad \Gamma \Proves  \phi} \\ \\ \\
\infer{ \Gamma, \forall v \: \phi \Proves \Delta}{(\exists t) \quad \Gamma, \phi^v_t \Proves \Delta} & & \infer{\Gamma \Proves  \forall v\: \phi}{(\exists w \not \in \mathrm{Free}(\Gamma)) \quad \Gamma \proves \phi^v_w}
\\ \\ \\
\infer{ \Gamma, \exists v \: \phi \Proves \Delta}{(\exists w \not \in \mathrm{Free}(\Gamma, \Delta)) \quad \Gamma, \phi^v_w \Proves \Delta} & & \infer{\Gamma \Proves  \exists v\: \phi}{(\exists t) \quad \Gamma \proves  \phi^v_t}
\\ \\ \\
\infer{\Gamma, \phi \Proves \Delta}{ \Gamma \Proves \Delta} & & \infer{\Gamma \Proves  \phi}{\Gamma \Proves \emptyset}
\end{tabular}
$ \\
\quad \\
\begin{center} Structural Rule\end{center} 
\quad\\
$$\infer{\Gamma' \Proves \Delta'}{\Gamma \Proves \Delta}$$
($\Gamma'$ consists of the same formulas as $\Gamma$, and $\Delta'$ consists of the same formulas as $\Delta$) \\
\quad\\

\begin{center} Cut Rule \end{center} 
\quad\\
$$ \infer {\Gamma, \Gamma'  \Proves  \Delta}{\Gamma \proves \phi & \Gamma', \phi \proves \Delta}$$

\caption{the system $\lL \iI_{\infty \omega}$}
($\Gamma$ finite, $\Delta$ singleton or empty)

\end{figure}

\begin{figure}\label{figure: LSigma}

\begin{center} Introduction Rules  \end{center}
\vspace{0.2cm}

$$\infer{\Falsehood \Proves \emptyset}{\quad} \qquad\qquad \infer{\emptyset \Proves \Truth}{\quad}$$\\

$$\infer{\Gamma, \phi \And \psi \Proves \Delta}{ \Gamma , \phi \Proves \Delta} \qquad\qquad\infer{\Gamma, \phi \And \psi \Proves \Delta}{ \Gamma , \psi \Proves \Delta}  \qquad\qquad  \infer{\Gamma \Proves 
\Delta, \phi \And \psi}{ \Gamma \Proves 
\Delta, \phi & \Gamma \Proves \Delta, \psi}$$
\\ 
$$\infer{\Gamma, \phi \Or \psi \Proves \Delta}{\Gamma , \phi \Proves \Delta & \Gamma , \psi \Proves \Delta} \qquad\qquad \infer{\Gamma\Proves \Delta, \phi \Or \psi }{\Gamma  \Proves \Delta, \phi} \qquad\qquad    \infer{\Gamma\Proves \Delta, \phi \Or \psi }{\Gamma  \Proves \Delta, \psi}$$
\\ 
$$\infer{ \Gamma, \exists v \: \phi \Proves \Delta}{\Gamma, \phi^v_w \Proves \Delta} \qquad\qquad \infer{\Gamma \Proves  \Delta, \exists v\: \phi}{\Gamma \proves \Delta, \phi^v_t}$$
\\
$$\infer{ \Gamma, \forall v \in s \: \phi \Proves \Delta}{\Gamma, t \not \in s \Or \phi^v_t \Proves \Delta} \qquad\qquad \hspace{-30pt}\infer{\Gamma \Proves \Delta, \forall v \in s\: \phi}{\Gamma \proves \Delta,  w \not \in s \Or\phi^v_w}$$
\\ 
$$\infer{\Gamma, \phi \Proves \Delta}{ \Gamma \Proves \Delta} \qquad\qquad \infer{\Gamma \Proves  \Delta, \phi}{\Gamma \Proves \Delta}$$

\vspace{0.5cm}

\begin{center} Structural Rule\end{center} 

$$\infer{\Gamma' \Proves \Delta'}{\Gamma \Proves \Delta}$$

\vspace{5mm}
\begin{center} Cut Rule \end{center}

$$\infer{\Gamma, \Gamma' \Proves \Delta, \Delta'}{   \Gamma \Proves \Delta, \phi & \Gamma', \psi \Proves \Delta'}$$

\caption{the system $\lL \mathbf \Sigma_{\omega \omega}(\tau)$ for a $\mathbf \Sigma(L_{\omega \omega})$ theory $\tau$}
 \vspace{4mm}
\begin{flushleft}
For the cut rule, $\phi \Yields \psi$ should be an axiom of $\tau$. For the structural rule, $\Gamma'$ should consist of the same formulas as $\Gamma$, and $\Delta'$ should consist of the same formulas as $\Delta$. For left existential quantification rule, and for the right universal quantification rule, $w$ should not be free in $\Gamma$ or $\Delta$.
\end{flushleft}
\end{figure}

\begin{figure}\label{figure: PRS}
An alternate axiomatization of $\PRS$:
\begin{enumerate}
\item $(\forall t \in a\: t \in b) \And (\forall t \in b\: t \in a) \Yields a =b$
\item $\exists t\: t \in x \Yields \exists t \in x\: \forall s \in x\: s \not \in t$
\item $z \in \{x,y\} \Equivalent z = x \Or z = y$
\item $z \in \Union x \Equivalent \exists y \in x: z \in y$
\item $y \in \{ t \in x\suchthat \phi(t,\overline z)\} \Equivalent y \in x \And \phi(y,\overline z)$
\item $y \in \{F(t,\overline z)\suchthat t \in x \} \Equivalent \exists t \in x\: y = F(t,\overline z)$
\item $\Truth \Yields P(x_1, \ldots, x_n) = x_i$
\item $\Truth \Yields K(x_1, \ldots, x_m) = F(G_1(x_1, \ldots, x_m), \ldots, G_n(x_1, \ldots, x_m))$
\item $\Truth \Yields F(x, y) = R(\{F(t, y) \suchthat t \in x\},x,y).$
\end{enumerate}

The supplementary axioms:
\begin{enumerate}
\item $\forall s \in x\: \exists t\: \phi(s,t, \overline z) \Yields  \exists y\: \forall s \in x \: \exists t \in y\: \phi(s, t, \overline z)$
\item $\Truth \Yields \exists f\: \mathrm{Bij}(f) \And \mathrm{Ord}(\mathrm{dom}(f)) \And \mathrm{ran}(f) = x$.

\end{enumerate}

\caption{the initial axioms of our base theory}
\end{figure}

\end{document}